\newtheorem{result}[theorem]{Result}
\newtheorem{rem}[theorem]{Remark}
\def\cC{\mathcal C}
\def\cD{\mathcal D}
\def\cE{\mathcal E}
\def\cF{\mathcal F}
\def\cG{\mathcal G}
\def\cH{\mathcal H}
\def\cQ{\mathcal Q}
\def\cT{\mathcal T}
\def\cX{\mathcal X}
\def\cY{\mathcal Y}
\def\PG{{\rm{PG}}}
\def\ord{\mbox{\rm ord}}
\def\deg{\mbox{\rm deg}}
\def\div{\mbox{\rm div}}
\def\dim{\mbox{\rm dim}}
\def\Fs{{\mathbb F}_{q^2}}
\newcommand{\PGU}{\mbox{\rm PGU}}
\newcommand{\aut}{\mbox{\rm Aut}}
\newcommand{\g}{\gamma}
\newcommand{\go}{\omega}
\newcommand{\ha}{{\textstyle\frac{1}{2}}}
\definecolor{mypurple}{RGB}{145, 61, 58}
\definecolor{myblue}{RGB}{0, 128, 255}
\definecolor{mybro}{RGB}{158, 8, 8}
\definecolor{mygr}{RGB}{0, 135, 95}
\title{On a Galois cover of the Hermitian curve of genus $\mathfrak{g}=\frac{1}{8}(q-1)^2$}
\author{Barbara Gatti\inst{} \and Gioia Schulte\inst{}}
\institute{Department of Mathematics and Physics "Ennio de Giorgi", University of Salento, Lecce, Italy and\\ Department of Mathematics, Computer Science and Economics, University of the Basilicata,
Potenza, Italy\
\email{barbara.gatti@unisalento.it, giola.schulte@unisalento.it}}
\begin{document}

%\thanks{{\em Keywords}: Algebraic curves, Positive characteristic, Automorphism groups.}

\maketitle
\begin{abstract}
In the study of algebraic curves with many points over a finite field, a well known general problem is to understanding better the properties of $\mathbb{F}_{q^2}$-maximal curves whose genera fall in the higher part of the spectrum of the genera of all $\mathbb{F}_{q^2}$-maximal curves. This problem is still open for genera smaller than $ \lfloor \frac{1}{6}(q^2-q+4) \rfloor$. 
%but it is still open for  $\lfloor \frac{1}{8}(q^2-2q+5)\rfloor$. 
In this paper we consider the case of $\mathfrak{g}=\frac{1}{8}(q-1)^2$ where $q\equiv 1\pmod{4}$ and the curve is the Galois cover of the Hermitian curve w.r.t to a cyclic automorphism group of order $4$. Our contributions concern Frobenius embedding, Weierstrass semigroups and automorphism groups.

\end{abstract}

\vspace{0.5cm}\noindent {\em Keywords}:
maximal curves, function fields, Galois cover, Frobenius embedding, Weierstrass semigroup.
\vspace{0.2cm}\noindent

\vspace{0.5cm}\noindent {\em Subject classifications}:
\vspace{0.2cm}\noindent  14H37, 14H05.

\begin{section}{Introduction} 
The interest around algebraic curves defined over a finite field $\mathbb{F}_\ell$ with many $\mathbb{F}_\ell$-rational points, especially $\mathbb{F}_\ell$-maximal curves, has been boasted by 
relevant applications to Finite geometry, Coding theory, and some other questions in Combinatorics. 
Here, $\mathbb{F}_\ell$-maximality means that the curve $\cF$ attains the upper bound in the famous Hasse-Weil theorem, that is, either $\cF$ is $\mathbb{F}_\ell$-rational, or $\ell=q^2$ for a power of the characteristic $p$ of $\mathbb{F}_q$ and the number of the points of a non-singular model $\cX$ defined over $\mathbb{F}_{q^2}$ is equal $q^2+1+2\mathfrak{g}q$ where $\mathfrak{g}>0$ is the genus of $\cF$. The problem of classifying $\mathbb{F}_{q^2}$-maximal curves has been completely solved so far for $q\le 7$, and partially for $q\le 16$; see \cite{nazar}. 

Among the $\mathbb{F}_{q^2}$-maximal curves for a given $q$, those with larger genera have clearly 
greater number of $\mathbb{F}_{q^2}$-rational points. This has given a motivation for the study of the upper spectrum of genera of $\mathbb{F}_{q^2}$-maximal curves. For a given $q$, the highest value of the spectrum, in other words the first largest genus, is $\ha q(q-1)$, and it is only attained by the Hermitian curve. 

%Previous work due to Abd\'on, Beelen, Fuhrman, Garcìa, Korchm\'aros, Torres and Vicino, contains several properties on the higher part of the spectrum.  

The second largest genus is equal to $\lfloor \frac{1}{4}(q-1)^2\rfloor$, so it is quit far away from the highest value $\ha q(q-1)$. The second largest genus is realized by a unique curve. This is
\begin{itemize}
\item[(i)] $\cE^1_{\nicefrac{(q+1)}{2}}:Y^q+Y-X^{\nicefrac{(q+1)}{2}}=0$, $\mathfrak{g}=\frac{1}{4}(q-1)^2$, $q$ odd; see \cite{FT} and \cite[Example 10.3]{HKT},
\item[(ii)] $\mathcal{T}_2:X^{q+1}+Y+Y^2+Y^4+\cdots+Y^{\nicefrac{q}{2}}=0$, $\mathfrak{g}=\frac{1}{4}q(q-2)$, $q$ even; see \cite{AT} and \cite[Example 10.3]{HKT}.
\end{itemize}

%For $q$ odd, this curve is the Fuhrmann-Torres curve of equation $\varepsilon^1_{\nicefrac{(q+1)}{2}}:Y^q+Y-X^{\nicefrac{(q+1)}{2}}=0$; see \cite{FT} and also \cite[Theorem 10.41]{HKT}. For $q$ even, it is the Abd\'on-Torres curve of equation $\mathcal{T}_2:X^{q+1}+Y+Y^2+Y^4+\cdots+Y^{\nicefrac{q}{2}}=0$; see \cite{AT}, and also \cite[Theorem 10.45]{HKT}. 

The third largest genus is equal to $\lfloor \frac{1}{6}(q^2-q+4)\rfloor$, again there is a large distance from the second highest value in the spectrum. The curves which are known to have the third largest genus are three, namely
\begin{itemize}
\item[(iii)] $\cF_0:X^{\nicefrac{(q+1)}{3}}+X^{\nicefrac{2(q+1)}{3}}+Y^{q+1}=0$, $\mathfrak{g}=\frac{1}{6}(q^2-q+4)$, $q\equiv 2 \pmod{  3}$; see \cite{CKT2}, and \cite[Example 10.33]{HKT},
\item[(iv)] $\cG:Y^q-YX^{\nicefrac{2(q-1)}{3}}+X^{\nicefrac{(q-1)}{3}}=0$, $\mathfrak{g}=\frac{1}{6}(q^2-q)$, $q\equiv 2 \pmod{3}$; see \cite{GSX} and  \cite[Example 10.34]{HKT}, 
\item[(v)] $\cT'_3:(Y+Y^3+\cdots+Y^{\nicefrac{q}{3}})^2-X^q-X=0$, $\mathfrak{g}=\frac{1}{6}(q^2-q)$, $q=3^h$; see \cite{AT2} and \cite[Example 10.37]{HKT}.
\end{itemize}
There are known three more $\mathbb{F}_{q^2}$-maximal curves whose genera are close to $\lfloor \frac{1}{6}(q^2-q+4)\rfloor$. They are 
\begin{itemize}
\item[(vi)] $\cF'_0: YX^{\nicefrac{(q-2)}{3}}+Y^q+X^{\nicefrac{(2q-1)}{3}}=0$, $\mathfrak{g}=\frac{1}{6}(q^2-q-2)$, $q\equiv 2 \pmod{3}$; see \cite{CKT2} and \cite[Example 10.34]{HKT}, 
\item[(vii)] $\mathcal{E}^1_{\nicefrac{(q+1)}{3}}: Y^q+Y-X^{\nicefrac{(q+1)}{3}}=0$, $\mathfrak{g}=\frac{1}{6}(q-2)(q-1)$, $q\equiv 2 \pmod{3}, q\geq 11;$ see \cite{KTA} and  \cite[Section 10.6]{HKT},
\item[(viii)] $\cT^{''}_{3}: Y+Y^3+\cdots+Y^{\nicefrac{q}{3}}+\go X^{q+1}=0$, $\go^{q-1}=-1$, $\mathfrak{g}=\frac{1}{6}q(q-3)$, $q=3^h$; see \cite{AT2} and \cite{CKT1}.
\end{itemize}
To find the next value in the spectrum which is the genus of some known $\mathbb{F}_{q^2}$-maximal curve for larger $q$, one has to get to $\lfloor \frac{1}{8}(q^2-2q+5)\rfloor$. We know $\mathbb{F}_{q^2}$-maximal curves for that genus and explicit equations are available for two of them. They  are  
\begin{itemize}
\item[(ix)] $\alpha_0(X)+\alpha_1(X)Y+\ldots+\alpha_i(X)Y^{2^i}+\ldots+ \alpha_{h-1}(X)Y^{2^{h-1}}=0$
where $\alpha_i(X)\in \mathbb{F}_{q^2}[X]$ and
$$\alpha_0(X)=X^{q+1},\,\, \alpha_1(X)= \frac{(X^q+X)^2+(X+b+b^2)^q(X^q+X)}{T(X)},\,\,\alpha_{h-1}(X)=(X+b+b^2)^{2q}$$
and the other coefficients $\alpha_i(X)$ for $2\le i \le h-2$ are computed recursively from the equation $T(X)\alpha_i(X)+\alpha_{i-1}(X)^2+(X+b+b^2)^{2q}+(X+b+b^2)^q(X^q+X)$
where $T(X)=X+X^p+\ldots+X^{\nicefrac{q}{p}}$,
$\mathfrak{g}=\frac{1}{8}q(q-2)$, $q\equiv 0 \pmod{4}$; see \cite{GalSub},
\item[(x)] $X^{\nicefrac{(q^2-1)}{4}}=Y(Y+1)^{q-1}$, $\mathfrak{g}= \frac{1}{8}(q-1)^2$, $q\equiv 1 \pmod{4}$; see \cite[Example 6.3]{GSX}.
\end{itemize}
The existence of a third $\mathbb{F}_{q^2}$-maximal curve of genus $\lfloor \frac{1}{8}(q^2-2q+5)\rfloor$ for $q\equiv 3 \pmod{4}$ is also known, but an explicit equation is still missing. 
There are known four more $\mathbb{F}_{q^2}$-maximal curves whose genera are close to $\lfloor \frac{1}{8}(q^2-2q+5)\rfloor$. They are  
\begin{itemize}
\item[(xi)] $\mathcal{E}^1_{\nicefrac{(q+1)}{4}}: Y^q+Y-X^{\nicefrac{(q+1)}{4}}=0$, $\mathfrak{g}=\frac{1}{8}(q-1)(q-3)$, $q\equiv 1 \pmod{4}$; see \cite{KTA} and  \cite[Section 10.6]{HKT},
\item[(xii)] $\mathcal{E}^1_{\nicefrac{(q+1)}{4}}:Y^q+Y-X^{\nicefrac{(q+1)}{4}}=0$,
$\cD_{\nicefrac{(q+1)}{2}}:X^{\nicefrac{(q+1)}{2}}+Y^{\nicefrac{(q+1)}{2}}+1=0$, $\mathfrak{g}=\frac{1}{8}(q-1)(q-3)$, $q\equiv 3 \pmod{4}$; see \cite{KTA} and  \cite[Section 10.6]{HKT},
\item[(xiii)] $\cX_2: Y^4+aY^2+Y+X^{q+1}=0$, $\mathfrak{g}=\frac{1}{8}q(q-4)$, $q\equiv 0 \pmod{2}$; see \cite{AG} and  \cite{HKT}.
\end{itemize} 

For a better outstanding of the relevant features of $\mathbb{F}_{q^2}$-maximal curves of genera in the upper part of the spectrum, it seems indispensable to investigate the main characteristics of the known $\mathbb{F}_{q^2}$-maximal curves of higher genera, first of all, their Frobenius embedding, Weierstrass semigroups at some $\mathbb{F}_{q^2}$-rational points, and automorphism group defined over $\mathbb{F}_{q^2}$. The results obtained may give a hint to figure out the largest value $\mathfrak{g}_0$ in the spectrum which is the genus of some $\mathbb{F}_{q^2}$-maximal curve not covered by the Hermitian curve. From previous work, $\mathfrak{g}_0\ge \ha (n^3+1)(n^2-2)+1$ where $q=n^3$; see \cite{GK}. In particular, the known $\mathbb{F}_{q^2}$-maximal curves of genera in the upper part of the spectrum are Galois sub-covers of the Hermitian curve.       

Our contributions in this direction are stated in the following theorem.
\begin{theorem}
\label{mt16102024} Let $\cF$ the $\mathbb{F}_{q^2}$-maximal curve as in {\rm{(x)}}. Then
\begin{itemize}
\item[(I)] The Frobenius embedding $\cX$ of $\cF$ has dimension four, and for $p>3$ the order sequence of $\cX$ is $(0,1,2,3,q)$.  
\item[(II)] There is a $\mathbb{F}_{q^2}$-rational point $P$ of $\cX$ such that the non-gaps of $\cX$ at $P$ smaller than $q+1$ are $\ha(q+1), \frac{1}{4}(q-1),q$.
\item[(III)] The $\mathbb{F}_{q^2}$-rational automorphism group of $\cX$ has order $\ha (q^2-1)$ and it is the inherited group from the Hermitian curve. 
\end{itemize}
\end{theorem}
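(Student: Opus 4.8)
The statement to prove is part (III); I treat (I) and (II) as already established. The plan is to present $\cF$ as the quotient $\cH/C_4$ of the Hermitian curve $\cH\colon y^q+y=x^{q+1}$ by the cyclic group $C_4=\langle\sigma\rangle$ of order $4$ inside $\PGU(3,q)=\Aut_{\Fs}(\cH)$, to compute the automorphisms that $\cF$ inherits from $\cH$, and then to prove these exhaust $\Aut_{\Fs}(\cF)$. Since $q\equiv1\pmod4$, the field $\fq$ already contains a primitive fourth root of unity $i$, so I would take $\sigma\colon(x,y)\mapsto(ix,-y)$, the order-$4$ element of the cyclic maximal torus $T=\{(x,y)\mapsto(\gl x,\gl^{q+1}y):\gl\in\Fs^{*}\}\cong C_{q^2-1}$ fixing the two points $P_\infty$ and $P_0=(0,0)$ of $\cH$. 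A Riemann--Hurwitz count for $\cH\to\cH/C_4$ (here $\sigma^2$ fixes $P_\infty$ together with the $q$ affine points on $x=0$, whereas $\sigma$ fixes only $P_0,P_\infty$) gives different degree $q+5$ and hence genus $\tfrac18(q-1)^2$, confirming the identification with {\rm(x)}.

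\textbf{The inherited group.} As a projectivity $\sigma=\diag(i,-1,1)$ has three pairwise distinct eigenvalues, because $p$ is odd; therefore its centralizer in $\PGU(3,q)$ is exactly the diagonal torus $T$, of order $q^2-1$. Since $N_{\PGU(3,q)}(C_4)/C_{\PGU(3,q)}(\sigma)$ embeds into $\Aut(C_4)\cong C_2$, we have $|N_{\PGU(3,q)}(C_4)|\le 2(q^2-1)$; and this bound is attained by the Weyl involution $w\colon(X:Y:Z)\mapsto(X:Z:Y)$, which is unitary because the Hermitian form is symmetric in $Y,Z$, interchanges $P_\infty$ and $P_0$, and satisfies $w\sigma w^{-1}=\sigma^{-1}$. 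Hence $N_{\PGU(3,q)}(C_4)=T\rtimes\langle w\rangle$ has order $2(q^2-1)$, and the group inherited by $\cF$, namely $\bar H:=N_{\PGU(3,q)}(C_4)/C_4$, is dihedral of order $\tfrac12(q^2-1)$, with cyclic part $T/C_4$ of order $\tfrac14(q^2-1)$.

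\textbf{No further automorphisms.} Put $\bar G:=\Aut_{\Fs}(\cF)$, so $\bar H\le\bar G$. By part (I) the Frobenius embedding $\cX$ has dimension four; the associated complete linear series is intrinsic, hence $\bar G$-invariant, so $\bar G$ is induced by projectivities of $\PG(4,\Fs)$ preserving the Hermitian variety carrying $\cX$, i.e.\ $\bar G\hookrightarrow\PGU(5,q)$. Two points then remain. First, I would exclude $p$-elements: since $\cH$, and hence $\cF$, has zero $p$-rank, any $p$-subgroup fixes a point, and the Lefschetz/Frobenius fixed-point constraints for $\Fs$-maximal curves bound its order and rule it out, so that $\bar G$ is tame. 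Second, by part (II) the images $\bar P_\infty,\bar P_0$ of $P_\infty,P_0$ carry the Weierstrass semigroup with small non-gaps $\tfrac14(q-1),\ha(q+1),q$; showing they are the only points of $\cF$ with this semigroup makes $\{\bar P_\infty,\bar P_0\}$ a $\bar G$-orbit. A tame group fixing a point acts faithfully on the cotangent line and so is cyclic, whence $\bar G_{\bar P_\infty}$ is cyclic, contains $T/C_4$, and also fixes $\bar P_0$; a Riemann--Hurwitz estimate for $\cF\to\cF/\bar G_{\bar P_\infty}$ together with the non-gap sequence at $\bar P_\infty$ then forces $\bar G_{\bar P_\infty}=T/C_4$. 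Combined with the two-point orbit this yields $|\bar G|=2\cdot\tfrac14(q^2-1)=\tfrac12(q^2-1)$ and $\bar G=\bar H$.

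\textbf{Main obstacle.} The inherited-group computation is routine linear algebra in $\PGU(3,q)$; the real work is the upper bound. The delicate steps are the exclusion of wild ($p$-element) automorphisms — which cannot be read off from $\bar G\hookrightarrow\PGU(5,q)$ alone, since that group has order divisible by $p$, and which instead requires the fixed-point analysis of automorphisms of maximal curves — and, above all, the proof that the tame cyclic stabiliser of $\bar P_\infty$ cannot properly exceed $T/C_4$. I expect the decisive inputs to be the intrinsic characterisation of the distinguished pair $\{\bar P_\infty,\bar P_0\}$ through the Weierstrass semigroup of part (II) and a sharp Riemann--Hurwitz estimate at these points, which together prevent any automorphism not already coming from $N_{\PGU(3,q)}(C_4)$.
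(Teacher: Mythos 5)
Your computation of the inherited group agrees in substance with the paper (which realises it as $\langle\Psi,\Theta\rangle$ with $\Theta\Psi\Theta=\Psi^{-q}$; note that this relation makes the group metacyclic but \emph{not} dihedral for $q\ge 5$, since $\Psi^{-q}\ne\Psi^{-1}$ when $\frac{1}{4}(q^2-1)\nmid q-1$, so your structural description should be corrected even though the order $\ha(q^2-1)$ is right). The real divergence, and the real problem, is in the upper bound. The paper's proof hinges on a concrete geometric fact: $\cX$ lies on the two quadrics $X_3^2-X_0X_4=0$ and $X_2^2-X_1X_3=0$, whose intersection $\cQ$ is an irreducible surface of degree $4$; since $\deg\cX=q+1>16$, B\'ezout forces every linear automorphism of $\cX$ to preserve $\cQ$, hence to permute the three singular points of $\cQ$, of which exactly two lie on $\cX$ (namely $Q_0$ and $Q_\infty$). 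This is what produces the $\aut$-invariant two-point set, and it is also what makes the exclusion of $p$-elements immediate: a $p$-element ($p\ne 2$) permuting a $2$-set fixes both points, contradicting the unique-fixed-point property of $p$-elements on curves of zero Hasse--Witt invariant.

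Your substitute for this step — ``showing that $\bar P_0,\bar P_\infty$ are the only points of $\cF$ with the Weierstrass semigroup of part (II) makes $\{\bar P_0,\bar P_\infty\}$ a $\bar G$-orbit'' — is exactly the assertion that needs proof, and you do not supply it. It is not a formality: establishing which points carry which semigroup requires locating all Weierstrass points of $\cX$ and computing order sequences at each of them (the paper devotes its final section to this, needs $p\ge 7$ there, and finds additional Weierstrass points $Q_c$ and finitely many affine candidates that must be distinguished from $Q_0,Q_\infty$). Without that analysis your two-point orbit, and with it both the exclusion of $p$-elements and the final stabiliser bound, is unsupported; the appeal to ``Lefschetz/Frobenius fixed-point constraints'' to rule out wild automorphisms is likewise not an argument one can check. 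The closing step — the tame cyclic stabiliser bound $|G_{O}|\le 4\gg+2$ forcing $d=1$ — does match the paper's use of \cite[Theorem 11.60]{HKT} and is fine once the two-point orbit is in hand. I would urge you either to carry out the Weierstrass-point analysis in full, or to replace it by the quadric-surface argument, which is shorter and works uniformly.
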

From our investigation some new features have emerged. In particular, the Frobenius embedding $\cX$ of $\cF$ is related with the classical theorem of Halphen; see \cite[Lemma 7.119]{HKT}. In fact, $\mathfrak{g}(\cX)$ exceeds the Halphen number $c_1(q+1,4)$ just by $1$, and it shows that the Halphen theorem is sharp, as $\cX$ is not contained in surface of $PG(4,\mathbb{F})$ of degree $\le 3$.

Our notation and terminology are standard and come from \cite{HKT}, see also \cite{Se} and \cite{sti}. Surveys on maximal curves are found in \cite{ACP,GA,GA1,GA2,VG} and  \cite[Section 10]{HKT}.

Throughout the paper, $\cH_q$ denotes the Hermitian curve given by its canonical affine equation $Y^q+Y-X^{q+1}=0$, and $\mathbb{F}_{q^2}(\cH_q)=\mathbb{F}_{q^2}(x,y)$ with $y^q+y-x^{q+1}=0$ stands for the $\mathbb{F}_{q^2}$-rational function field of $\cH_q$.

\end{section}

\section{Background on function fields}
Let $\cC$ be an $\mathbb{F}_\ell$-\emph{rational plane curve}, that is, a projective, geometrically irreducible, possible singular algebraic plane curve of genus $\mathfrak{g}$
%of genus $\mathfrak{g}\geq 2$ 
defined over the finite field $\mathbb{F}_\ell$ of order $\ell$. Let $\mathbb{F}_\ell(\cC)$ be the \emph{function field} of $\cC$ which is an algebraic function field of transcendency degree one with constant field $\mathbb{F}_\ell$. As usual, $\cC$ is  viewed as a curve over an algebraic closure $\mathbb{F}$ of $\mathbb{F}_\ell$, and its function field $\mathbb{F}(\cC)$ is a constant field extension of $\mathbb{F}_\ell(\cC)$ by $\mathbb{F}|\mathbb{F}_\ell$. 

For any place $P$ of $\mathbb{F}(\cC)$, the \emph{Weierstrass semigroup} $H(P)$ of $\cC$ is a subsemi-group of the additive semigroup of $\mathbb{N}$ whose elements are the natural numbers $n$ such that there exists a function of $\mathbb{F}(\cC)$ whose pole divisor is $nP$. The numbers in the complementary set $G(P)=\mathbb{N}\setminus H(P)$ are the \emph{gaps}. The Weierstrass gap Theorem, see \cite[Theorem 6.89]{HKT}, states that there are exactly $\mathfrak{g}$ gaps at $P$, and that they fall in the interval $[1,2\mathfrak{g}-1]$. 
The distribution of the gaps in $[1,2\mathfrak{g}-1]$, equivalently the structure of $H(P)$, does not alter when $P$ varies on the set of all places of $\mathbb{F}(\cC)$, apart from finitely many exceptional places, named     Weierstrass points of $\cC$. 

There exists an one-to-one correspondence between places of $\mathbb{F}(\cC)$ and branches of $\cC$. According to \cite[Definition 4.29]{HKT} see also \cite{Se}, a \emph{branch} is an equivalence class of primitive branch representations where a branch representation is a point of the projective plane over the field $\mathbb{F}((t))$ of all formal power series in the indeterminate $t$ and with coefficients in $\mathbb{F}$. Center, order and order sequence of a branch are defined to be the center, the order, and the order sequence of any of its branch representations. Every point $P$ of $\cC$ is the center of at least one branch, and the total number of branches centered at $P$ does not exceed the multiplicity of $P$, and this bound is attained if and only if $P$ is a node. Fix a homogeneous coordinate system $(X_0:X_1:X_2)$ in the projective plane $PG(2,\mathbb{F})$ coordinatized by $\mathbb{F}$. Regarding the line $\ell_\infty$ of equation $X_0=0$, the arising affine plane $AG(2,\mathbb{F})$ coordinatized by $\mathbb{F}$ has an affine coordinate system $(X,Y)$ where $X=X_1/X_0$ and $Y=X_2/X_0$. Let $f(X,Y)=0$ be a (minimal, affine) equation of $\cC$. Then $\mathbb{F}(\cC)=\mathbb{F}(x,y)$ with $f(x,y)=0$. If $P=(a_0:a_1:a_2)$ is a point of $\cC$ with $a_0\ne 0$, i.e. $P$ is a point of $AG(2,\mathbb{F})$, then each primitive branch representation $\gamma$ of $\cC$ centered at $P$ has a representation $(x(t),y(t))$ in \emph{special affine coordinates} where $a=a_1/a_0$, $b=a_2/a_0$, and $x(t)=a+\alpha(t),\,y(t)=b+\beta(t)$ with $\alpha(t),\beta(t)\in \mathbb{F}[[t]]$. In particular, $P$ is the center of the branch represented by $\gamma$ and the branch belongs to the curve $\cC$ of equation $f(X,Y)=0$ if and only if $f(x(t),y(t))$ vanishes in $\mathbb{F}[[t]]$. If $a_0=0$, i.e. $P$ is a point of $PG(2,\mathbb{F})$ not in $AG(2,\mathbb{F})$, then at last one of the other two coordinates $a_1,a_2$ is not zero, say $a_1$. Changing the homogeneous coordinate system $(X_0:X_1:X_2)$ into $(X_1:X_0:X_2)$ ensures that $a_0\neq 0$, and hence the above discussion on branches of $\cC$ centered at $P$ can be repeated.       

The \emph{automorphism group} $\aut(\cC)$ of $\cC$ is defined to be the automorphism group of $\mathbb{F}(\cC)$ fixing every element of $\mathbb{F}$. It has a faithful permutation representation on the set of all places of $\mathbb{F}(\cC)$ (equivalently on the set of all branches of $\mathbb{F}(\cC))$. The \emph{$\mathbb{F}_\ell$-automorphism group}  $\aut({\mathbb{F}_\ell}(\cC))$ of $\mathbb{F}_\ell(\cC)$ is the subgroup of $\aut(\cC)$ consisting of all automorphisms of $\aut(\cC))$ defined over $\mathbb{F}_\ell$. In particular, the action of $\aut({\mathbb{F}_\ell}(\cC))$ on the $\mathbb{F}_\ell$-rational branches of $\cC$ is the same as on the set of degree $1$ places of $\mathbb{F}_\ell(\cC)$.

Let $G$ be a finite subgroup of $\aut({\mathbb{F}_{\ell}}(\cC))$. The \emph{Galois subcover} of $\mathbb{F}_\ell(\cC)$ with respect to $G$ is the fixed field of $G$, that is, the subfield ${\mathbb{F}_{\ell}}(\cC)^G$ consisting of all functions in $\mathbb{F}_{\ell}(\cC)$ fixed by every element in $G$. 
There exists a projective, geometrically irreducible, possible singular algebraic plane curve $\cY$ whose function field $\mathbb{F}_{\ell}(\cY)$ is isomorphic to ${\mathbb{F}_{\ell}}(\cC)^G$ over $\mathbb{F}_\ell$. 
%Let $\cY$ be a non-singular model of ${\mathbb{F}_{\ell}}(\cX)^G$, that is,
%a projective, non-singular, geometrically irreducible, algebraic curve with function field ${\mathbb{F}_{\ell}}(\cX)^G$. 
Such a curve $\cY$ is unique, apart from birational isomorphisms over $\mathbb{F}_\ell$, and it is the \emph{quotient curve of $\cC$ with respect to $G$} and is denoted by $\cC/G$. The covering $\cC\mapsto \cY$ has degree $|G|$ and the field extension $\mathbb{F}_{\ell}(\cC)|\mathbb{F}_{\ell}(\cC)^G$ is Galois.

Now, we collect those known results on automorphisms which will be used in our proofs. Let $G$ be any subgroup of $\aut(\cC)$. 
%The orbit
%$$o(P)=\{Q\mid Q=P^\alpha,\, \alpha\in G\}$$
%is {\em long} if $|o(P)|=|G|$, otherwise $o(P)$ is {\em short}.
If $P$ is a place of $\mathbb{F}(\cC)$, then the \emph{stabiliser} $G_P$ of $P$ in $G$ is the subgroup of $G$ consisting of all elements fixing $P$. 

From now on let $\ell=q^2$ with $q=p^h$ and assume that $\cX$ is a $\mathbb{F}_{q^2}$-maximal curve.
\begin{result}\cite[Theorem 11.49(b)]{HKT}
\label{resth11.49b} All $p$-elements of $G_P$ together with the identity form a normal subgroup $S_P$ of $G_P$ so that $G_P=S_P\rtimes C$, the semidirect product of $S_P$ with a cyclic complement $C$.
\end{result}
\begin{result}\cite[Theorem 11.129]{HKT}
\label{resth11.129} If $\cX$ has zero Hasse-Witt invariant then every non-trivial element of a $p$-subgroup $S_p$ of   $\aut(\cX)$ has a unique fixed point $P$ on $\cX$, and no non-trivial element in $S_p$ fixes a point other than $P$.
\end{result}
%A useful corollary of Result \ref{resth11.129} is the following.
%\begin{result}
%\label{lem15042023} Let $\cX$ be an  $\mathbb{F}_\ell$-rational curve whose number of $\mathbb{F}_\ell$-rational places is $N\ge 2$. If $\cX$ has zero Hasse-Witt invariant and $S$ is a $p$-subgroup of $\aut({\mathbb{F}_\ell}(\cX))$ then $S$ fixes a unique degree 1 place of $\mathbb{F}_\ell(\cX)$ and $|S|$ divides $N-1$.
%\end{result}
%The following result is due to Stichtenoth \cite{stichtenoth1973I}.
%\begin{result} \cite[Theorem 11.78(i)]{HKT}
%\label{sti1} Let $S$ be a $p$-subgroup of $\mathbb{F}(\cX)$ fixing a point. If $|S|$ is larger than the genus of $\mathbb{F}(\cX)$ then the Galois subcover of $\mathbb{F}(\cX)$ with respect to $S$ is rational.
%\end{result}
The following result is well known, see for instance \cite{GT}. 

\begin{result}
\label{zeroprank} All $\mathbb{F}_{q^2}$-maximal curves have zero Hasse-Witt invariant.
\end{result}
The following result is commonly attributed to Serre, see Lachaud \cite{lachaud1987}.
\begin{result}\cite[Theorem 10.2]{HKT}
\label{resth10.2} For every subgroup $G$ of $\aut({\mathbb{F}_{q^2}}(\cX))$, the quotient curve $\cX/G$ is also  $\mathbb{F}_{q^2}$-maximal.
\end{result}

\subsection{The Hermitian curve and its function field}
 In this subsection, we focus on the Hermitian function fields and its Galois subcovers. The usual affine equation, or canonical form, of the Hermitian curve $\cH_q$, as an $\mathbb{F}_{q^2}$-rational curve, is $Y^q+Y=X^{q+1}$ and hence its function field is $\mathbb{F}_{q^2}(x,y)$ with $y^q+y-x^{q+1}=0$. 
 %Another useful equation of $\cH_q$ is $Y^q-Y+\omega X^{q+1}=0$ with $\omega\in \mathbb{F}_{q^2}$ such that $\omega^{q-1}=-1$. 
Since the Hermitian curve is non-singular, each point of $\cH_q$ is the center of a unique branch. So we may identify each point of $\cH_q$ with the unique branch centered at the point.     
 We collect a number of known results on the $\mathbb{F}_{q^2}$-automorphism group $\aut(\mathbb{F}_{q^2}(\cH_q))$ of $\cH_q$. For more details, the reader is referred to  \cite{hoffer1972,huppertI1967}.
\begin{result}\cite[Theorem 12.24 (iv), Proposition 11.30]{HKT}
\label{sect12.3}
$\aut(\mathbb{F}_{q^2}(\cH_q))\cong \PGU(3,q)$ and $\aut(\mathbb{F}_{q^2}(\cH_q))$ acts on the set of all $\mathbb{F}_{q^2}$-rational points of $\cH_q$ as $\PGU(3,q)$ in its natural doubly transitive permutation  representation of degree $q^3+1$ on the isotropic points of the unitary polarity of the projective plane $\PG(2,\mathbb{F}_{q^2})$. Furthermore,
$\aut(\mathbb{F}_{q^2}(\cH_q))$ $\cong$ $\aut(\mathbb{F}(\cH_q))$.
\end{result}
The following result based on Result \ref{sect12.3} describes the structure of the two-point stabiliser $H$ of $\aut(\mathbb{F}_{q^2}(\cH_q))$. Since $\aut(\mathbb{F}_{q^2}(\cH_q))$ acts on the set of $\mathbb{F}_{q^2}$-rational points of $\cH_q$ as doubly transitive permutation group, the origin $O=(0,0)$ together with the unique point at infinity $Y_\infty$ of $\cH_q$ may be chosen so that $H$ is the stabiliser of $O$ and $Y_\infty$. 

\begin{result}\cite[Section 4]{GSX}
\label{struct} Let the Hermitian function field be given by its canonical form $\mathbb{F}_{q^2}(x,y)$ with $y^q+y-x^{q+1}=0$. Then the stabiliser $H$ of both points $O$ and $Y_\infty$ in $\aut(\mathbb{F}_{q^2}(\cH_q))$ is a cyclic group of order $q^2-1$ and it consists of all maps
\begin{equation*}
    %\label{correzione}
    \varphi_{\lambda}:\,(x,y)\mapsto (\lambda x,\lambda^{q+1}y)
 \end{equation*}
 where $\lambda$ ranges over the non-zero elements in $\mathbb{F}_{q^2}$.
 \end{result}
 Let $\Phi$ be the subgroup of $H$ of order $4$. The following result is due to Garc\'ia, Stichtenoth and Xing. 
 \begin{result}\cite[Example 6.3]{GSX} 
 \label{gsxA} The $\mathbb{F}_{q^2}$-rational plane curve of equation 
 \begin{equation*}
 %\label{eq25052024} 
 z^{\nicefrac{(q^2-1)}{4}}=t(t+1)^{q-1} 
 \end{equation*}
 is the quotient curve of $\cH_q$ with respect to $\Phi$, and, for $q\equiv 1\pmod 4$, it has genus $\mathfrak{g}=\frac{1}{8}(q-1)^2$.  
 \end{result}

%\textcolor{red}{inserire eq di GSX.... Sia H sottogr di ord 4 di G, the gal sub has eq.....}

\subsection{Background on algebraic curves}
We report some background from \cite[Chapter 7]{HKT}, see also \cite{sv}, about the geometry of curves embedded in projective spaces.
\subsubsection{Order sequence} Let $\cX$ be a projective, geometrically irreducible, non-singular algebraic curve, embedded in a projective space ${\rm{PG}}(r,\mathbb{F})$ where $\mathbb{F}$ is an algebracially closed field of characteristic $p>0$.
%For a non-singular model $\cX$ of $\Gamma$, let $\mathbb{K}(\cX)=\mathbb{K}(\Gamma)$ denote the function field of $\cX$. There exists a bijection between the points of $\cX$, and the places of $\mathbb{K}(\cX)$ and the branches of $\Gamma$.
For any point $P\in\cX$ 
%(more precisely, for any branch of $\Gamma$ centered at $P$)
the possible intersection multiplicities of hyperplanes with the curve at $P$ are considered. There is only a finite number of these intersection multiplicities, the number being equal to $r+1$. There is a unique hyperplane, called osculating hyperplane with the maximum intersection multiplicity. The hyperplanes cut out on $\Gamma$ a simple,
fixed point free, not-necessarily complete linear series $\Sigma$ of dimension $r$ and degree $n$, where $n$ is the degree of the curve $\cX$. An integer $j$ is a $(\Sigma,P)$-order if there is a hyperplane $H$ such that $I(P,H\cap \cX)=j$. 
%Notice that, if $P$ is a singular point, then $P$ is intended as a branch of $\Gamma$ centered at $P$.  
In the case that $\Sigma$ is the canonical series, it follows from the
Riemann--Roch theorem that $j$ is a $(\Sigma,P)$-order if and only if $j+1$ is a Weierstrass gap.

For any non-negative integer $i$, consider the set of all hyperplanes $H$
of $\PG(r,\mathbb{F})$ for which the intersection number is at least $i$. Such
hyperplanes correspond to the points of a subspace $\overline{\Pi}_i$ in
the dual space of $\PG(r,K)$. Then we have  the decreasing chain
$$
\PG(r,K) = \overline{\Pi}_0 \supset
\overline{\Pi}_1 \supset \overline{\Pi}_2 \supset \cdots.
$$

An integer $j$ is a $(\Sigma,P)$-order if and only if $\overline{\Pi}_j$ is not equal to the subsequent space in the chain.
In this case $\overline{\Pi}_{j+1}$ has
codimension 1 in $\overline{\Pi}_j$. Since $\deg\,\, \Sigma=n$, we have that $\overline{\Pi}_i$ is
empty as soon as $i> n$. The number of $(\Sigma,P)$-orders is exactly
$r+1$; they are $j_0(P),j_1(P),\ldots,j_r(P)$ in increasing order, and
$(j_0(P),j_1(P),\ldots,j_r(P))$ is the order-sequence of $\cX$ at $P$.
Here $j_0(P)=0$, and
$j_1(P)=1$ since $P$ is a non-singular point of $\cX$.
%if and only if the branch $P$ is linear (in particular when $P$ is a non-singular point).
Consider the intersection $\Pi_i$ of hyperplanes $H$ of $\PG(r,K)$,
for which
$$
I(P,H\cap\g)\geq j_{i + 1}.
$$ Then the flag
$
\Pi_0\subset \Pi_1\subset \cdots \subset \Pi_{r-1} \subset
\PG(r,K)
$
can be viewed as the algebraic analogue of the Frenet
frame in differential geometry.
 Notice that $\Pi_0$ is just $P$, and $\Pi_1$ is the tangent line to the
branch $\g$ at $P$. %Also, $\Pi_i$ is the $i$-{\em th osculating
%space} or the {\em osculating $i$-space}; in particular,
Furthermore, $\Pi_{r-1}$ is the osculating hyperplane at $P$.
The order-sequence is the same for all but finitely many points of $\cX$, each such exceptional point is called a $\Sigma$-Weierstrass point of $\cX$.
The order-sequence at a generally chosen point of $\cX$ is the order sequence of $\cX$ and denoted by $(\varepsilon_0,\varepsilon_1,\ldots,\varepsilon_r)$. Here $j_i(P)$ is at least $\varepsilon_i$ for $0\le i \le r$ at any point of $\cX$.
\subsubsection{Projections}
We also recall how the \emph{projection} $\pi$ of $\cX$ from a point $C\in PG(r,\mathbb{F})$ on a hyperplane $\Pi$ disjoint from $C$ is performed; see \cite[Chapter 7]{HKT}.  For a point $P\in\cX$ other than $C$, the line through $C$ and $P$ meets $\Pi$ in exactly one point, the projection $P'=\pi(P)$ of $P$. If $C\in \cX$ then $C$ also has a projection which is defined to be the point cut out on $\Pi$ by the tangent line to $\cX$ at $C$. There exists a geometrically irreducible, not necessarily non-singular, algebraic curve $\cY$ in $PG(r-1,\mathbb{F})$
which contains all but a finitely many the projections of the points of $\cX$. Actually, $\pi$ may happen not to be  birational. If this is the case then $\cY$ is \emph{covered} by $\cX$, and $\pi$ is \emph{composed of an involution} of degree $d$ where the geometric meaning of $d$ is the number of points of $\cX$ with the same  projection on a generically chosen point of $\cY$. In terms of function fields, $\mathbb{F}(\cY)$ is isomorphic to a subfield of $\mathbb{F}(\cX)$ of index $d$. If $\mathbb{F}(\cY)$ is the fixed field of an automorphism group $G$ of $\mathbb{F}(\cX)$ then $\cY$ is the quotient curve of $\cX$ with respect to $G$. 
\subsubsection{B\'ezout's theorem}
The higher dimensional generalization of B\'ezout's theorem about the number of common points of $m-1$ hypersurfaces $\cH_1,\ldots,\cH_{m-1}$ of $\textrm{PG}(m-1,\mathbb{F}_q)$ states that either that number is infinite or does
not exceed the product $\deg(\cH_1)\cdot\ldots \cdot\deg(\cH_{m-1})$.
For a discussion on B\'ezout's theorem and its generalization; see \cite{Vog}.

\subsubsection{Embedding of a plane curve in a higher dimension projective space} Let $\cC$ be  a projective, geometrically irreducible, possible singular algebraic plane curve defined over $\mathbb{F}$. Take $r+1$ variables from the function field $\mathbb{F}(\cC)$, say $u_0,u_1,\ldots u_r$, which are linearly independent over $\mathbb{F}$. The \emph{embedding} of $\cC$ via $(u_0,u_1,\ldots,u_r)$ is the algebraic curve $\cX$ of $PG(r,\mathbb{F})$ whose branches are the images of those of $\cC$ under the map $\tau:\,(x,y)\mapsto (u_0,u_1,\ldots,u_r)$. From now on we consider the case where $\tau$ is birational and $\cX$ is geometrically irreducible and non-singular, that is, $\cX$ is a non-singular model of $\cC$. For the purpose of the present paper, we also assume that $r=4$. Then, apart from the case $p=3$ we have $p>r$, and hence we may use ordinary higher derivatives instead of Hasse derivatives. Another admissible simplification is to put $u_0=1,u_1=x,u_2=y$ where $\cC$ has equation $f(X,Y)=0$. Set $z=u_3, u=u_4$. Then $z=z(x,y)$ and $u=u(x,y)$ with $z(x,y),u(x,y)\in \mathbb{F}(\cC)$. For a branch $\gamma$ of $\cC$, let $(x(t),y(t))$ with $x(t),y(t)\in \mathbb{F}[[t]]$ be a primitive branch representation of $\gamma$. Then the image $P\in\cX$ of $\gamma$ has a primitive branch representation $(1,x(t),y(t),z(t),u(t))$ where $z(t)=z(x(t),y(t))$, and $u(t)=u(x(t),y(t))$. For the derivatives and higher derivatives of $w(t)\in \mathbb{F}[[t]]$ we write $w^{(i)}$ in place of $d^iw(t)/dt^i$. 
%\textcolor{red}{Scrivere chi sono y(i) e u(i)} 
Then, the Wronskian matrix $W$ of $\cX$ at the point $P$ reads   
\begin{equation*}
%\label{matrixwro} 
W=
\begin{pmatrix}
 1& x(P) & y(P) & z(P) & u(P)\\
 0& 1 & y^{(1)}(P) & z^{(1)}(P) & u^{(1)}(P)\\
 0 & 0 & y^{(2)}(P) & z^{(2)}(P) & u^{(2)}(P)\\
 0 & 0 & y^{(3)}(P) & z^{(3)}(P) & u^{(3)}(P)\\
 0 & 0 & y^{(4)}(P) & z^{(4)}(P) & u^{(4)}(P)
\end{pmatrix}.
\end{equation*}
The first four orders $j_0(P),j_1(P),j_2(P),j_3(P)$ are $0,1,2,3$ if and only if the first four rows in $W$ are linearly independent over $\mathbb{F}$. Moreover, $W$ has non-vanishing determinant if and only if $j_4(P)=4$.  

\subsubsection{Natural Embedding} In this subsection, $\cC$ is an $\mathbb{F}_{q^2}$-rational plane curve. 
For a degree one place $P_0$ of $\mathbb{F}_{q^2}(\cC)$,  $|(q+1)P_0|$ is the \emph{Frobenius} linear series.
Assume that $\cC$ is an $\mathbb{F}_{q^2}$-maximal curve. Then the Frobenius linear series does not depend on the choice of $P_0$, i.e. $(q+1)P_0$ and $(q+1)Q$ are equivalent divisors for any degree one $\mathbb{F}_{q^2}$-rational place $Q$ of $\mathbb{F}_{q^2}(\cC)$. Moreover, the linear series $|(q+1)P_0|$ is complete, base-point free, simple and defined over $\mathbb{F}_{q^2}$. 
For $r=\dim(|(q+1)P_0|)$, let $u_0,u_1,\ldots,u_r$ be a basis for the Riemann-Roch space $\mathcal{L}((q+1)P_0)$. Then the embedding $\tau$ of $\cC$ via $(u_0,u_1,\ldots,u_r)$ into $PG(r,\mathbb{F})$ is the \emph{Frobenius}-embedding, and $r=\dim(|(q+1)P_0|)$ is the \emph{Frobenius}-dimension. The natural embedding theorem states that $\cX=\tau(\cC)$ is a non-singular model of $\cC$ of degree $q+1$ which lies in a Hermitian variety $\cH_{r,q}$ of $PG(r,\mathbb{F}_{q^2})$; see \cite[Section 10.2]{HKT}. It may happen that $\cH_{r,q}$ is singular. If this is the case, a projection from a vertex in  $PG(r,\mathbb{F}_{q^2})$ maps  $\cX$ to another non-singular model $\cX'$ of $\cC$ embedded in $PG(r',\mathbb{F}_{q^2})$ such that $\cX'$ is a curve of degree $q+1$ lying on a non-singular Hermitian variety $\cH_{r',q}$. If this occurs then, by a slight abuse of terminology, $\cX'$ is considered as the (geometric) Frobenius embedding of $\cC$. 

We recall one more result on the Frobenius linear series of $\mathbb{F}_{q^2}$-maximal curves. As before, we limit ourselves on the relevant case for our purpose, i.e. $r=4$.   
\begin{result}\cite[Proposition 10.6]{HKT}
\label{res25092024}
For an $\mathbb{F}_{q^2}$-rational degree one place $P$ of $\mathbb{F}(\cC)$, the number $q+1$ is an order of $\cX$ at $P$. Moreover, let  $0<m_1(P)<m_2(P)<m_3(P)\le q$ be the non-gaps of $\mathbb{F}_{q^2}(\cC)$ at $P$ smaller than $q+1$. Then the order sequence $(j_0=0,j_1,j_2,j_3,j_4)$ of $\cX$ at $P$ is $j_{4-i}+m_i(P)=q+1$ for $i=0,1,2,3$. Also, for non $\mathbb{F}_{q^2}$-rational place $P$ of $\cC$, $q$ is an order of $\cX$ at $P$. 
\end{result}
As a corollary, the following claim holds.
\begin{result}
%\label{res25092024A} 
For a point $P$ of $\cX$ such that the first four rows of the Wronskian matrix at $P$ are linearly independent over $\mathbb{F}$, the order sequence of $\cX$ at $P$ is either $(0,1,2,3,q+1)$, or $(0,1,2,3,q)$ according as $P$ is an $\mathbb{F}_{q^2}$-rational point of $\cX$ or not. 
\end{result}
To determine the $\mathbb{F}_{q^2}$-rational automorphisms, the following result will be useful.    
\begin{result}\cite[Theorem 3.7]{KT}
 \label{res10102024} Let $\cX$ be the Frobenius embedding of an $\mathbb{F}_{q^2}$-maximal curve in $PG(r,\mathbb{F}_{q^2})$. Then $\aut({\mathbb{F}_{q^2}}(\cX))$ is a linear group, the subgroup of $PGU(r+1,\mathbb{F}_{q^2})$ which preserves $\cX$ where $PGU(r+1,\mathbb{F}_{q^2})$ is the automorphism group of the non-singular Hermitian variety $\cH_{r,q}$ containing $\cX$.
\end{result}

\begin{section}{Construction, inherited automorphism group and genus} 
From now on $q\equiv 1 \pmod{4}$ is assumed. 

Fix a fourth primitive root of unity $\lambda$ in $\mathbb{F}_{q^2}$. Then
\begin{equation*}\label{aggiustoesponenti}
\lambda^{q+1}=\lambda^{q-1}\lambda^2=\left(\lambda^4\right)^{\nicefrac{(q-1)}{4}} \lambda^2=\lambda^2.
\end{equation*}
From Result  \ref{struct}, 
$\varphi_\lambda:\left(x,y\right)\mapsto\left(\lambda x, \lambda^2 y\right)$ 
is an $\mathbb{F}_{q^2}$-automorphism of $\cH_q$ fixing both points $O$ and $Y_\infty$. 

Result \ref{gsxA} provides an equation for the quotient curve $\cF$ of the Hermitian curve $\mathcal{H}_q$ with respect to the automorphism group $\Phi=\left\langle\varphi_\lambda\right\rangle$ of order 4. We begin by giving an alternative equation for it. 
\begin{lemma}
\label{lem25092024} The plane curve $\cF$ of equation 
$$\eta^q+\eta \tau^{\nicefrac{(q-1)}{2}}-\tau^{q-\nicefrac{(q-1)}{4}}=0$$
is a quotient curve of $\cH_q$ with respect to $\Phi$. 
\end{lemma}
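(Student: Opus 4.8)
The plan is to exhibit two explicit rational functions on $\cH_q$ that are fixed by $\Phi$, verify they satisfy the displayed equation, and then confirm by a degree count that they generate the whole fixed field $\mathbb{F}_{q^2}(x,y)^\Phi$. Concretely, I would set
$$\tau = x^4, \qquad \eta = x^2 y.$$
Since $\varphi_\lambda(x)=\lambda x$ and $\varphi_\lambda(y)=\lambda^{q+1}y=\lambda^2 y$ with $\lambda^4=1$, one checks immediately that $\varphi_\lambda(\tau)=\lambda^4 x^4=\tau$ and $\varphi_\lambda(\eta)=\lambda^2 x^2\cdot\lambda^2 y=\lambda^4 x^2 y=\eta$, so both functions lie in $\mathbb{F}_{q^2}(x,y)^\Phi$.

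Next, the defining relation should fall out of the Hermitian equation after a single multiplication. Starting from $y^q+y=x^{q+1}$ and multiplying by $x^{2q}$ gives
$$x^{2q}y^q + x^{2q}y = x^{3q+1}.$$
Here $x^{2q}y^q=(x^2 y)^q=\eta^q$; using $x^{2q-2}=(x^4)^{(q-1)/2}=\tau^{(q-1)/2}$ the middle term becomes $\eta\,\tau^{(q-1)/2}$; and since $q\equiv 1\pmod 4$ we have $4\mid(3q+1)$, so $x^{3q+1}=\tau^{(3q+1)/4}=\tau^{\,q-(q-1)/4}$. This is exactly $\eta^q+\eta\,\tau^{(q-1)/2}-\tau^{\,q-(q-1)/4}=0$.

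The step that carries the real content is showing that $\mathbb{F}_{q^2}(\tau,\eta)$ is the \emph{entire} fixed field, not merely a subfield of it. Because $\Phi$ acts faithfully, $[\mathbb{F}_{q^2}(x,y):\mathbb{F}_{q^2}(x,y)^\Phi]=|\Phi|=4$, so the inclusion $\mathbb{F}_{q^2}(\tau,\eta)\subseteq\mathbb{F}_{q^2}(x,y)^\Phi$ already forces $[\mathbb{F}_{q^2}(x,y):\mathbb{F}_{q^2}(\tau,\eta)]\ge 4$. For the reverse bound I observe that $x$ is a root of $X^4-\tau$ over $\mathbb{F}_{q^2}(\tau,\eta)$ and that $y=\eta/x^2$, whence $\mathbb{F}_{q^2}(x,y)=\mathbb{F}_{q^2}(\tau,\eta)(x)$ has degree at most $4$ over $\mathbb{F}_{q^2}(\tau,\eta)$. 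The two bounds give degree exactly $4$, so $\mathbb{F}_{q^2}(\tau,\eta)=\mathbb{F}_{q^2}(x,y)^\Phi$, which is by definition the function field of $\cH_q/\Phi$.

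Finally, to see that the displayed polynomial is a genuine plane model of this quotient (and not a proper power or a reducible relation), I would record the intermediate degrees: $[\mathbb{F}_{q^2}(x,y):\mathbb{F}_{q^2}(\tau)]=4q$, namely $4$ from $\tau=x^4$ times $q$ from the Hermitian extension $\mathbb{F}_{q^2}(x,y)/\mathbb{F}_{q^2}(x)$, so $[\mathbb{F}_{q^2}(\tau,\eta):\mathbb{F}_{q^2}(\tau)]=4q/4=q$. Thus $\eta$ has degree exactly $q$ over $\mathbb{F}_{q^2}(\tau)$; since it satisfies the displayed equation, which has degree $q$ in $\eta$, that equation is, up to a scalar, the minimal polynomial of $\eta$ and is therefore irreducible. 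As $\mathbb{F}_{q^2}$ is algebraically closed in $\mathbb{F}_{q^2}(x,y)^\Phi$, the model is geometrically irreducible with function field $\mathbb{F}_{q^2}(x,y)^\Phi$, i.e. $\cF=\cH_q/\Phi$. I expect the main obstacle to be precisely this generation-and-irreducibility step: the invariance check and the algebraic identity are immediate, whereas verifying that $x^4$ and $x^2 y$ generate the \emph{full} degree-$4$ fixed field, rather than a proper subfield, is where care is required.
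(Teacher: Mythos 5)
Your proof is correct and follows essentially the same strategy as the paper: exhibit explicit $\Phi$-invariant functions, derive the plane equation from the Hermitian relation, and use the index-$4$ degree count to conclude that they generate the full fixed field. The only difference is cosmetic — the paper first takes $\xi=x^4$, $\tau=x^2/y$ and then passes to $\eta=\xi/\tau=x^2y$ by a birational change of variables, whereas you work with $x^4$ and $x^2y$ from the start (and you add an explicit irreducibility check that the paper leaves implicit), which slightly streamlines the computation.
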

\begin{proof}
Let $F$ be the fixed field of $\Phi$ in $\mathbb{F}_{q^2}(\cH_q)$. From Galois theory, $[\mathbb{F}_{q^2}(\cH_q):F]=4$. Let
%\begin{equation*}\label{primoinvariante}
$\xi=x^4$ and 
%\end{equation*} and 
%\begin{equation*}\label{secondoinvariante}
$\tau=\frac{x^2}{y}.$
%\end{equation*}
Since $$\varphi_\lambda\left(\xi\right)=\varphi_\lambda\left(x^4\right)=\varphi_\lambda\left(x\right)^4=\left(\lambda x\right)^4=x^4=\xi$$ and
$$\varphi_\lambda\left(\tau\right)=\varphi_\lambda\left (\frac{x^2}{y}\right)=\frac{\varphi_\lambda(x)^2}{\varphi_\lambda\left(y\right)}=\frac{\left(\lambda x\right)^2}{\lambda^2 y}=\frac{x^2}{y}=\tau$$
we have $\Fs(\xi,\tau)\subseteq F$. Moreover $\Fs (x,y)=\Fs(x,\tau)$ and  $[\Fs(x,y):\Fs(\xi,\tau)]\leq 4$. As  $[\mathbb{F}_{q^2}(\cH_q):F]=4$, the claim follows. Thus $\mathbb{F}_{q^2}(\cF)=\mathbb{F}_{q^2}(\xi,\tau)$. 
Now, eliminate $x$ and $y$ from the equation of $\cH_q$. Replacing $y$ by $\frac{x^2}{\tau}$ in $y^q+y-x^{q+1}$, 
\begin{equation*}
%\label{passaggio1}
\left(\frac{x^2}{\tau}\right)^q+\frac{x^2}{\tau}=x^{q+1}. 
\end{equation*}
Then division by $x^2$ gives $$\frac{x^{2q-2}}{\tau^q}+\frac{1}{\tau}=x^{q-1},$$
and $4\mid (q-1)$ yields
\begin{equation}\label{quoziente}
    \xi^{\nicefrac{(q-1)}{2}}+\tau^{q-1}=\xi^{\nicefrac{(q-1)}{4}}\tau^q.
\end{equation}
%This partially completes Remark 10.64 (ii) in \cite{HKT}.
%We use some birational transformations to obtain a new model of the curve (\ref{quoziente}) which has only one point at infinity $P_{\infty}$ with a unique branch centred at it.
Let $\eta=\frac{\xi}{\tau}$. Then $\tau=\xi\eta$, and  the birational transformation $\left(\xi,\tau\right)\mapsto \left(\xi,\frac{\xi}{\eta}\right)$ takes (\ref{quoziente}) to 
%\begin{equation}
    %\label{intermedia1}
    %1+\sigma^{q-1}\xi^{\nicefrac{(q-1)}{2}}=\sigma^q\xi^{q-\nicefrac{(q-1)}{4}}.
%\end{equation}
%By replacing $\sigma$ with $\frac{1}{\eta}$ in (\ref{intermedia1}) we have
\begin{equation}
    \label{quozientebella}
    \eta^q+\eta\xi^{\nicefrac{(q-1)}{2}}=\xi^{q-\nicefrac{(q-1)}{4}}.
\end{equation}
which shows that $\cF$ is a quotient curve of $\cH_q$ with respect to $\Phi$. 
\end{proof}
%The plane curve $\cF$ of equation (\ref{quozientebella}) is a new model of the function field $\mathbb{F}_{q^2}(\cF)$. 
%From (\ref{quozientebella}), $\cF$ has only one point at infinity $P_{\infty}=(1:0:0)$. 
We exhibit a birational map from $\cF$ to the curve in Result \ref{gsxA}.
%We point out that $\cF$ is birationally isomorphic over $\mathbb{F}_{q^2}$ to the plane curve of equation $z^{\nicefrac{(q^2-1)}{4}}=t(t+1)^{q-1}$ given in \cite[Example 6.3]{GSX}. In fact, both are plane models of the quotient curve of the Hermitian curve with respect to the same automorphism group of order $4$. The associated birational isomorphism can be computed as follows.
Let $t=\frac{\xi^{\nicefrac{(q-1)}{2}}}{\tau^{q-1}}$ and $z=\xi$. Substitution in $z^{\nicefrac{(q^2-1)}{4}}=t(t+1)^{q-1}$  gives 
\begin{equation*}%\label{ragionamento}
\xi^{\nicefrac{(q^2-1)}{4}}=\frac{\xi^{\nicefrac{(q-1)}{2}}}{\tau^{q-1}}\left ( \frac{\xi^{\nicefrac{(q-1)}{2}}}{\tau^{q-1}}+1 \right )^{q-1}.
\end{equation*}
Therefore, 
\begin{equation*}%\label{passaggioragionamento}
\tau^{q-1}\xi^{\nicefrac{(q^2-1)}{4}}=\left ( \frac{\xi^{\nicefrac{(q-1)}{2}+\nicefrac{1}{2}}}{\tau^{q-1}}+\xi^{\nicefrac{1}{2}}\right)^{q-1},
\end{equation*}
whence
\begin{equation*}%\label{ultimopassaggioragionamento}
\tau\xi^{\nicefrac{(q+1)}{4}}=\frac{\xi^{\nicefrac{q}{2}}}{\tau^{q-1}}+\xi^{\nicefrac{1}{2}}.
\end{equation*}
Thus 
\begin{equation*}%\label{altropassaggio3}
\tau^q\xi^{\nicefrac{(q+1)}{4}-\nicefrac{1}{2}}=\xi^{\nicefrac{q}{2}-\nicefrac{1}{2}}+\tau^{q-1},
\end{equation*}
which gives (\ref{quoziente}).
\begin{lemma}\label{lem16102024}   
Let for a fourth primitive root of unity $\lambda$ in $\mathbb{F}_{q^2}$,
\begin{equation} \label{F1ab}
    \begin{array}{cc} 
    F_1(X,Y)=Y^{q}+YX^{\nicefrac{(q-1)}{2}}-X^{q-\nicefrac{(q-1)}{4}}, & F_2(X,Y)=Y^{q}- YX^{\nicefrac{(q-1)}{2}}+X^{q-\nicefrac{(q-1)}{4}}, \\
    F_3(X,Y)=Y^{q}+\lambda YX^{\nicefrac{(q-1)}{2}}- \lambda X^{q-\nicefrac{(q-1)}{4}}, & F_4(X,Y)=Y^{q}-\lambda YX^{\nicefrac{(q-1)}{2}}+\lambda X^{q-\nicefrac{(q-1)}{4}}.
\end{array}
\end{equation}

%\begin{equation}
 %F_1(X,Y)=Y^{q}+YX^{\nicefrac{(q-1)}{2}}-X^{q-\nicefrac{(q-1)}{4}},\, F_2(X,Y)=Y^{q}- YX^{\nicefrac{(q-1)}{2}}+X^{q-\nicefrac{(q-1)}{4}},
 %\end{equation}
%\begin{equation}
    %\label{F3ab}
    %F_3(X,Y)=Y^{q}+\lambda YX^{\nicefrac{(q-1)}{2}}- \lambda X^{q-\nicefrac{(q-1)}{4}},\, F_4(X,Y)=Y^{q}-\lambda YX^{\nicefrac{(q-1)}{2}}+\lambda X^{q-\nicefrac{(q-1)}{4}}.
%\end{equation}
Then the plane curves $\cF_i$ of affine equations $F_i(X,Y)=0$ are pairwise isomorphic over $\mathbb{F}_{q^2}$. 
\end{lemma}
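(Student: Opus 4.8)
The plan is to realize each of the three remaining curves as the image of $\cF_1$ under an explicit diagonal projectivity of $PG(2,\mathbb{F}_{q^2})$. First I would record the common shape of the four defining polynomials. Writing $e=\nicefrac{(q-1)}{2}$ and $d=q-\nicefrac{(q-1)}{4}$, each $F_i$ reads
\begin{equation*}
F_i(X,Y)=Y^q+\mu_i\bigl(YX^{e}-X^{d}\bigr),
\end{equation*}
where $\mu_1=1,\ \mu_2=-1,\ \mu_3=\lambda,\ \mu_4=-\lambda$. Since $\lambda$ is a primitive fourth root of unity, the set $\{\mu_1,\mu_2,\mu_3,\mu_4\}=\{1,-1,\lambda,-\lambda\}$ is precisely the full group of fourth roots of unity in $\mathbb{F}_{q^2}$; this is the observation that drives the whole argument.

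Next I would look for a substitution $(X,Y)\mapsto(aX,bY)$, with $a,b\in\mathbb{F}_{q^2}^\ast$, carrying $\cF_1$ onto $\cF_j$. Applying it to $F_1$ and normalising the coefficient of $Y^q$ to $1$ turns $F_1$ into $Y^q+a^{e}b^{1-q}YX^{e}-a^{d}b^{-q}X^{d}$. Matching this with $F_j$ forces the two conditions $a^{e}b^{1-q}=\mu_j$ and $a^{d}b^{-q}=\mu_j$; dividing them gives $b=a^{d-e}=a^{\nicefrac{(q+3)}{4}}$, which is a genuine element of $\mathbb{F}_{q^2}$ because $q\equiv1\pmod 4$ makes $\nicefrac{(q+3)}{4}$ an integer. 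Substituting this back, a short exponent computation (using $e+(d-e)(1-q)=-\nicefrac{(q^2-1)}{4}$) reduces the single remaining condition to $\mu_j=a^{-\nicefrac{(q^2-1)}{4}}$.

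The final step is a surjectivity remark. The multiplicative group $\mathbb{F}_{q^2}^\ast$ is cyclic of order $q^2-1$, and $4\mid q^2-1$, so the power map $a\mapsto a^{\nicefrac{(q^2-1)}{4}}$ sends $\mathbb{F}_{q^2}^\ast$ onto the group of fourth roots of unity. Hence for each $j\in\{2,3,4\}$ there is an $a_j\in\mathbb{F}_{q^2}^\ast$ with $a_j^{-\nicefrac{(q^2-1)}{4}}=\mu_j$; setting $b_j=a_j^{\nicefrac{(q+3)}{4}}$, the diagonal map $(X,Y)\mapsto(a_jX,b_jY)$ is an $\mathbb{F}_{q^2}$-isomorphism between $\cF_1$ and $\cF_j$. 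Composing these maps and their inverses then yields the asserted pairwise isomorphisms.

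I expect the only delicate point to be pure bookkeeping: checking that $e$, $d$, $d-e=\nicefrac{(q+3)}{4}$ and $\nicefrac{(q^2-1)}{4}$ are all integers under $q\equiv1\pmod 4$, and that the exponent simplification comes out as claimed. There is no genuine geometric obstacle, since the maps are manifestly defined over $\mathbb{F}_{q^2}$ and invertible as soon as $a\neq0$.
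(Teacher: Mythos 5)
Your proof is correct and follows essentially the same route as the paper: both use diagonal substitutions $(X,Y)\mapsto(aX,bY)$ with $b=a^{\nicefrac{(q+3)}{4}}$, under which the coefficient $\mu_i$ is multiplied by $a^{-\nicefrac{(q^2-1)}{4}}$. The only difference is organizational -- the paper exhibits explicit witnesses ($a=w^2$ for $\cF_1\to\cF_2$ and $a=w$ for $\cF_1\to\cF_3$ or $\cF_4$, with $w$ a primitive element), whereas you phrase it uniformly via surjectivity of $a\mapsto a^{\nicefrac{(q^2-1)}{4}}$ onto the fourth roots of unity; both are valid.
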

\begin{proof} First we show that $\cF_2$ is birationally isomorphic to $\cF_1$ over $\mathbb{F}_{q^2}$. For a primitive element $w$ of $\mathbb{F}_{q^2}$, let $u=w^2$. Then $u^{\nicefrac{(q^2-1)}{4}}=-1$. Let $\mu$ be the affine transformation $(X,Y)\mapsto (uX,vY)$ where $v=u^{\nicefrac{(q+3)}{4}}$. We show that $\mu$ takes $\cF_1$ to $\cF_2$. Since
$$(vY)^q+vY(uX)^{\nicefrac{(q-1)}{2}}-(uX)^{q-\nicefrac{(q-1)}{4}}=v^q(Y^q+\frac{1}{v^{q-1}}u^{\nicefrac{(q-1)}{2}}YX^{\nicefrac{(q-1)}{2}}-\frac{u^{q-\nicefrac{(q-1)}{4}}}{v^q}X^{q-\nicefrac{(q-1)}{4}})$$
whence the claim follows. Similarly, $\mu$ takes $\cF_3$ to $\cF_4$. It remains to show that either $\cF_3$, or $\cF_4$ is birationally isomorphic to $\cF_1$ over $\mathbb{F}_{q^2}$. Since $w$ is a primitive element of $\mathbb{F}_{q^2}$, we have either $\lambda=w^{\nicefrac{(q^2-1)}{4}}$, or $\lambda^{-1}=-\lambda=w^{\nicefrac{(q^2-1)}{4}}$.   
Let $\delta$ be  the affine transformation $(X,Y)\mapsto (wX,vY)$ where $v=w^{\nicefrac{(q+3)}{4}}$. Since
$$(vY)^q+vY(wX)^{\nicefrac{(q-1)}{2}}-(wX)^{q-\nicefrac{(q-1)}{4}}=v^q\left (Y^q+\frac{1}{v^{q-1}}w^{\nicefrac{(q-1)}{2}}YX^{\nicefrac{(q-1)}{2}}-\frac{w^{q-\nicefrac{(q-1)}{4}}}{v^q}X^{q-\nicefrac{(q-1)}{4}} \right)$$
$$=v^q\left (Y^q-w^{\nicefrac{(q^2-1)}{4}}YX^{\nicefrac{(q-1)}{2}}+w^{\nicefrac{(q^2-1)}{4}}X^{q-\nicefrac{(q-1)}{4}} \right).$$
This shows that $\delta$ takes $\cF_1$ to either $\cF_3$, or $\cF_4$ according as either $\lambda=-w^{\nicefrac{(q^2-1)}{4}}$, or $\lambda=w^{\nicefrac{(q^2-1)}{4}}$.    

\end{proof}
%Let $t=y^{q-1}$ and $z=x^4$. Substitution in $z^{\nicefrac{(q^2-1)}{4}}=t(t+1)^{q-1}$  gives %From (\ref{quoziente}), 
%\begin{equation}\label{primainxy}
%x^{q^2-1}=y^{q-1}(y^{q-1}+1)^{q-1}.
%\end{equation}
%whence %From (\ref{secondaeq}) follows that
%\begin{equation}\label{secondainxy}
%x^{2(q-1)}+\left ( \frac{x^2}{y}\right )^{q-1}=x^{q-1}\left ( \frac{x^2}{y}\right %)^q
%\end{equation}
%\begin{equation}\label{passaggi}
%x^{2(q-1)}+\frac{x^{2(q-1)}}{y^{q-1}}=x^{q-1}\frac{x^{2q}}{y^q}.
%\end{equation}
%Therefore, 
%\begin{equation}\label{altripassaggi}
%x^{q-1}+\frac{x^{q-1}}{y^{q-1}}=\frac{x^{2q}}{y^q}
%\end{equation}
%\begin{equation}\label{altripassaggi2}
%y^qx^{q-1}+yx^{q-1}=x^{2q}.
%\end{equation}
%This together with $y^q+y=x^{q+1}$ yield
%\begin{equation}\label{penultimopassaggio}
%y^q+y=x^{q+1}
%\end{equation}
%\begin{equation}\label{ultimopassaggio}
%(y^q+y)^{q-1}=(x^{q+1})^{q-1}.
%\end{equation}
%}

\begin{lemma}\label{automorfismiquoziente}
    The $\mathbb{F}_{q^2}$-automorphism group of the curve of equation (\ref{quozientebella}) has a subgroup of order $\frac{1}{4}(q^2-1)$ generated by $\Psi:(\xi,\eta)\mapsto (\theta\xi,\zeta\eta),$ with $\theta^{\nicefrac{(q^2-1)}{4}}=1$ and $\zeta=\theta^{\nicefrac{(q+3)}{4}}$.
\end{lemma}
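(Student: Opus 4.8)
The plan is to argue by a direct substitution that checks each prescribed map $\Psi$ preserves the curve (\ref{quozientebella}), and then to count how many such maps there are. First I would substitute $(\xi,\eta)\mapsto(\theta\xi,\zeta\eta)$ into $\eta^q+\eta\xi^{\nicefrac{(q-1)}{2}}=\xi^{q-\nicefrac{(q-1)}{4}}$, obtaining
\[
\zeta^q\eta^q+\zeta\,\theta^{\nicefrac{(q-1)}{2}}\,\eta\xi^{\nicefrac{(q-1)}{2}}=\theta^{\,q-\nicefrac{(q-1)}{4}}\,\xi^{q-\nicefrac{(q-1)}{4}}.
\]
Dividing through by $\zeta^q$, the transformed equation coincides with (\ref{quozientebella}) exactly when the two coefficient relations
\[
\zeta^{q-1}=\theta^{\nicefrac{(q-1)}{2}},\qquad \zeta^{q}=\theta^{\,q-\nicefrac{(q-1)}{4}}
\]
hold. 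So it suffices to verify that the prescribed choices $\zeta=\theta^{\nicefrac{(q+3)}{4}}$ and $\theta^{\nicefrac{(q^2-1)}{4}}=1$ force both relations.

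The core of the argument is then just a comparison of exponents. For the first relation, $\zeta^{q-1}=\theta^{\nicefrac{(q+3)(q-1)}{4}}$, and the exponent differs from $\nicefrac{(q-1)}{2}$ by $\frac{(q+3)(q-1)}{4}-\frac{q-1}{2}=\frac{q^2-1}{4}$; for the second, $\zeta^{q}=\theta^{\nicefrac{q(q+3)}{4}}$, and the exponent differs from $q-\nicefrac{(q-1)}{4}=\frac{3q+1}{4}$ by $\frac{q(q+3)}{4}-\frac{3q+1}{4}=\frac{q^2-1}{4}$. Since $\theta^{\nicefrac{(q^2-1)}{4}}=1$, both discrepancies disappear, so each $\Psi$ is an $\mathbb{F}_{q^2}$-automorphism of (\ref{quozientebella}); note that $\theta,\zeta\in\mathbb{F}_{q^2}$ and that all the fractional exponents are genuine integers because $q\equiv1\pmod4$.

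To get the order, I would observe that $\theta\mapsto\Psi$ is a homomorphism from the group of $\frac14(q^2-1)$-th roots of unity of $\mathbb{F}_{q^2}$ into $\aut(\mathbb{F}_{q^2}(\cF))$: composing two such maps multiplies the first coordinates by $\theta_1\theta_2$ and the second by $\zeta_1\zeta_2=(\theta_1\theta_2)^{\nicefrac{(q+3)}{4}}$, which is exactly the map attached to $\theta_1\theta_2$. This group of roots of unity is cyclic of order $\frac14(q^2-1)$, since $\frac14(q^2-1)\mid q^2-1$ and $\mathbb{F}_{q^2}^\ast$ is cyclic of order $q^2-1$. The homomorphism is injective because $\Psi$ recovers $\theta$ from its action on $\xi$, so its image is cyclic of order precisely $\frac14(q^2-1)$, generated by the $\Psi$ attached to a primitive $\frac14(q^2-1)$-th root of unity.

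Conceptually the result is transparent: these maps are the automorphisms inherited from the stabiliser $H$ of $O$ and $Y_\infty$ in $\aut(\mathbb{F}_{q^2}(\cH_q))$ of Result \ref{struct}. Since $H$ is cyclic, hence abelian, it normalises its order-$4$ subgroup $\Phi$ and therefore descends to the quotient $\cF=\cH_q/\Phi$ with kernel $\Phi$, producing the cyclic group $H/\Phi$ of order $\frac14(q^2-1)$. I expect no genuine obstacle here beyond careful bookkeeping of the fractional exponents; the only thing to watch is that both coefficient conditions must be shown to collapse to the single relation $\theta^{\nicefrac{(q^2-1)}{4}}=1$, which is what the exponent computation above accomplishes.
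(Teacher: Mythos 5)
Your proposal is correct and follows essentially the same route as the paper: substitute $(\xi,\eta)\mapsto(\theta\xi,\zeta\eta)$ into (\ref{quozientebella}) and compare coefficients to reduce everything to the relations $\zeta^{q-1}=\theta^{\nicefrac{(q-1)}{2}}$ and $\zeta^{q}=\theta^{q-\nicefrac{(q-1)}{4}}$, which collapse to $\zeta=\theta^{\nicefrac{(q+3)}{4}}$ and $\theta^{\nicefrac{(q^2-1)}{4}}=1$. You merely run the computation in the sufficiency direction rather than the necessity direction, and your explicit injective homomorphism from the $\tfrac14(q^2-1)$-th roots of unity (together with the identification with $H/\Phi$) supplies the order count that the paper leaves implicit.
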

\begin{proof}
The map $\Psi$ is an $\mathbb{F}_{q^2}$-automorphism of the curve, since
%\begin{equation*}\label{applicazione}
$\zeta^q\eta^q+\zeta \eta \theta^{\nicefrac{(q-1)}{2}}\xi^{\nicefrac{(q-1)}{2}}=\theta^{q-\nicefrac{(q-1)}{4}}\xi^{q-\nicefrac{(q-1)}{4}}.$
%\end{equation*}
Therefore
%\begin{equation*}\label{coefficienti}
 $\zeta^q=\zeta \theta^{\nicefrac{(q-1)}{2}}=\theta^{q-\nicefrac{(q-1)}{4}},$
%\end{equation*}
whence
%\begin{equation}
$\zeta^{q-1}=\theta^{\nicefrac{(q-1)}{2}}.$
%\end{equation}
Thus
%\begin{equation}
    $\zeta=\theta^{q-\nicefrac{(q-1)}{4}-\nicefrac{(q-1)}{2}}=\theta^{\nicefrac{(q+3)}{4}}$
%\end{equation}
and
%\begin{equation}
    $\theta^{\nicefrac{(q^2-1)}{4}}=1.$
%\end{equation}
\end{proof}
\begin{lemma}\label{automorfismiquoziente2}
    The $\mathbb{F}_{q^2}$-automorphism group of the curve of equation (\ref{quozientebella}) has a subgroup of order $2$ generated by $\Theta:(\xi,\eta)\mapsto \left(\frac{\xi^3}{\eta^4},\frac{\xi^2}{\eta^3}\right)$.
\end{lemma}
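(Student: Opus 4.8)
The plan is to verify directly the two properties that make $\Theta$ an $\mathbb{F}_{q^2}$-automorphism of order $2$: that $\Theta$ is an involution, and that it carries the curve of equation (\ref{quozientebella}) onto itself. Since $\Theta$ is given by monomials with integer exponents and coefficient $1$, it is defined over the prime field, so its $\mathbb{F}_{q^2}$-rationality is automatic; the standing hypothesis $q\equiv 1\pmod 4$ will be used only to guarantee that all the fractional exponents occurring below are genuine integers.

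First I would check $\Theta^2=\mathrm{id}$. Setting $\xi'=\xi^3/\eta^4$ and $\eta'=\xi^2/\eta^3$, a one-line substitution gives
\[
\frac{(\xi')^3}{(\eta')^4}=\frac{\xi^9/\eta^{12}}{\xi^8/\eta^{12}}=\xi,\qquad
\frac{(\xi')^2}{(\eta')^3}=\frac{\xi^6/\eta^8}{\xi^6/\eta^9}=\eta,
\]
so $\Theta^2=\mathrm{id}$; as plainly $\Theta\neq\mathrm{id}$, its order is exactly $2$. In particular $\Theta$ is its own inverse, hence birational, and so it will induce a field automorphism of $\mathbb{F}_{q^2}(\xi,\eta)$ over $\mathbb{F}_{q^2}$ once curve-invariance is established.

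The substantive step is to substitute $\Theta$ into (\ref{quozientebella}), i.e. to examine
\[
\left(\frac{\xi^2}{\eta^3}\right)^{q}+\frac{\xi^2}{\eta^3}\left(\frac{\xi^3}{\eta^4}\right)^{\nicefrac{(q-1)}{2}}=\left(\frac{\xi^3}{\eta^4}\right)^{q-\nicefrac{(q-1)}{4}}.
\]
I would clear denominators by multiplying through by $\eta^{3q+1}$, obtaining
\[
\xi^{2q}\eta+\xi^{\nicefrac{(3q+1)}{2}}\eta^q=\xi^{\nicefrac{(9q+3)}{4}},
\]
and then divide by the nonzero function $\xi^{\nicefrac{(3q+1)}{2}}$ to reach
\[
\eta^q+\eta\,\xi^{\nicefrac{(q-1)}{2}}=\xi^{\nicefrac{(3q+1)}{4}}.
\]
Since $q-\nicefrac{(q-1)}{4}=\nicefrac{(3q+1)}{4}$, the right-hand side agrees with that of (\ref{quozientebella}), so the transformed equation is exactly (\ref{quozientebella}); equivalently, the pullback of the defining polynomial under $\Theta$ differs from it only by the nonzero factor $\xi^{\nicefrac{(3q+1)}{2}}\eta^{-(3q+1)}$. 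This shows $\Theta$ preserves the curve and, together with the involution property, completes the argument.

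I do not expect any genuine obstacle: the whole proof reduces to bookkeeping of exponents. The only points requiring care are the verification that $2q\ge\nicefrac{(3q+1)}{2}$, so that $\xi^{\nicefrac{(3q+1)}{2}}$ can indeed be factored out of the first term, and the integrality of the half- and quarter-exponents $\nicefrac{(q-1)}{2}$, $\nicefrac{(3q+1)}{2}$, $\nicefrac{(3q+1)}{4}$, $\nicefrac{(9q+3)}{4}$, both of which follow at once from $q\equiv 1\pmod 4$.
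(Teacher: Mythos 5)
Your proposal is correct and follows essentially the same route as the paper's proof: verify $\Theta^2=\mathrm{id}$ by direct substitution, then pull back the defining equation of (\ref{quozientebella}) and observe that it returns to itself after multiplication by the unit $\eta^{3q+1}/\xi^{\nicefrac{(3q+1)}{2}}$. All of your exponent computations check out, so nothing further is needed.
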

\begin{proof}
The map $\Theta$ has order $2$, since
%\begin{equation} \label{involuzione}
$\Theta^2(\xi,\eta)=\Theta(\Theta(\xi,\eta))=\Theta\left(\frac{\xi^3}{\eta^4},\frac{\xi^2}{\eta^3}\right)=\left(\xi,\eta\right).$
%\end{equation}
% Now we prove that $\Theta$ is an $\mathbb{F}_{q^2}$-automorphism of the curve (\ref{quozientebella}). 
 From
\begin{equation*}%\label{applicazioneinvoluzione}
\Theta\left(\eta^q+\eta\xi^{\nicefrac{(q-1)}{2}}-\xi^{q-\nicefrac{(q-1)}{4}}\right)=0
\end{equation*}
we have
\begin{equation*}
    %\label{apllicazionepassaggio}
    \Theta(\eta)^q+\Theta(\eta)\Theta(\xi)^{\nicefrac{(q-1)}{2}}-\Theta(\xi)^{q-\nicefrac{(q-1)}{4}}=0.
\end{equation*}
Therefore
\begin{equation*}%\label{involuzionepassaggi}
    \left(\frac{\xi^2}{\eta^3}\right)^q+\left(\frac{\xi^2}{\eta^3}\right)\left(\frac{\xi^3}{\eta^4}\right)^{\nicefrac{(q-1)}{2}}-\left(\frac{\xi^3}{\eta^4}\right)^{q-\nicefrac{(q-1)}{4}}=0,
\end{equation*}
and the claim follows multiplying both sides by $\eta^{3q+1}/\xi^{\nicefrac{(3q+1)}{2}}$.
\end{proof}
The automorphism group $\Phi$ fixes both points $O$ and $Y_\infty$ of $\mathcal{H}_q$. Furthermore, under the action of $\Phi$, the remaining $q-1$ points on the line $x=0$ form $\ha(q-1)$ orbits each of length $2$, while the points off that line form long orbits. Therefore, the Hurwitz genus formula applied to $\Phi$, see \cite[Theorem 11.57]{HKT}, gives the following result.  
\begin{lemma}
    The genus of the curve of equation (\ref{quozientebella}) is equal to $\frac{1}{8}(q-1)^2.$
\end{lemma}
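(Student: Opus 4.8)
The plan is to apply the Hurwitz genus formula to the Galois cover $\cH_q \to \cH_q/\Phi = \cF$ of degree $|\Phi| = 4$, exactly as announced before the statement. Since $q \equiv 1 \pmod 4$ forces $q$, and hence $p$, to be odd, we have $\gcd(|\Phi|,p) = 1$ and the cover is tame; therefore the different exponent at a place $P$ of $\cH_q$ equals $e_P - 1$, where $e_P = |\Phi_P|$ is the order of the stabiliser of $P$ in $\Phi$. Using $\mathfrak{g}(\cH_q) = \ha q(q-1)$, the formula reads
\[ 2\mathfrak{g}(\cH_q) - 2 = 4\bigl(2\mathfrak{g}(\cF) - 2\bigr) + \sum_{P}(e_P - 1), \]
so it only remains to evaluate the different $\sum_P (e_P - 1)$.

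To do this I would read off the stabilisers directly from $\varphi_\lambda : (x,y) \mapsto (\lambda x, \lambda^2 y)$, where $\lambda^2 = -1$. Both $O$ and $Y_\infty$ are fixed by all of $\Phi$, hence are totally ramified with $e_P = 4$, each contributing $4 - 1 = 3$. A direct computation gives $\varphi_\lambda^2 : (x,y) \mapsto (-x, y)$, so the unique involution of $\Phi$ fixes precisely the points on the line $x = 0$; consequently each of the $q-1$ points $(0,b)$ with $b \neq 0$ has stabiliser $\langle \varphi_\lambda^2 \rangle$ of order $2$ and contributes $2 - 1 = 1$. All remaining points of $\cH_q$ lie in the free, length-$4$ orbits and contribute nothing. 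Hence
\[ \sum_P (e_P - 1) = 3 + 3 + (q-1)\cdot 1 = q + 5. \]

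Substituting into the Hurwitz formula yields $q^2 - q - 2 = 4\bigl(2\mathfrak{g}(\cF) - 2\bigr) + q + 5$, which simplifies to $8\,\mathfrak{g}(\cF) = (q-1)^2$, giving $\mathfrak{g}(\cF) = \tfrac18(q-1)^2$, as claimed.

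The only step requiring care is the bookkeeping of the different: one must confirm that the action is tame, verify that $\varphi_\lambda^2$ acts as $(x,y)\mapsto(-x,y)$, and check that the ramified places are exactly $O$, $Y_\infty$ and the $q-1$ points on $x = 0$, with the stabiliser orders stated above. Once this is combined with the orbit structure already recorded before the statement, the genus drops out by the routine arithmetic simplification above.
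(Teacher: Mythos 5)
Your proposal is correct and is essentially the paper's own argument: the authors likewise record that $\Phi$ fixes $O$ and $Y_\infty$, that the remaining $q-1$ points on $x=0$ lie in short orbits of length $2$, and that all other orbits are long, and then invoke the Hurwitz genus formula for the tame degree-$4$ cover. You have simply written out the different computation $\sum_P(e_P-1)=3+3+(q-1)=q+5$ and the resulting arithmetic explicitly, which matches the paper's intended calculation.
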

\section{Branches centered in the singular point }
\label{secCAG}
The plane curve $\cF$ of equation (\ref{quozientebella}) has two singular points, the origin $O=(0,0)$ and the only point at infinity $P_{\infty}=(1:0:0)$.  
Since the origin $O$ has multiplicity equal to $\ha(q-1)+1=\ha(q+1)$, there exist at most $\ha(q+1)$ branches centered at $O$; see \cite[Theorem 4.36 (iii)]{HKT}. Actually, this bound is attained. In fact, from $\xi=x^4$, the branches of $\cF$ centered at $O$ arise from those $\Phi$-orbits on $\cH_q$ which lie over the point $(0,0)$ in the cover $\cH_q|\cF$. Such a $\Phi$-orbit either consists of  $(0,0)$ or it has size two, namely one of the pairs $\{(0,b),(0,-b)\}$ where $b^{q-1}=1$. Altogether, their number is $\ha(q+1)$. Furthermore, each of them is linear, see  \cite[Theorem 4.36 (ii)]{HKT}. From Equation (\ref{quozientebella}), the tangent lines to $\cF$ at $O$ are the $\xi$ and $\eta$ axes. The point $P_\infty$ of the $\eta$-axis is the center of unique branch. In fact, that branch arises from the branch of $\mathcal{H}_q$ centered at the point $(0:1:0)$ which is fixed by $\Phi$.  
A straightforward computation proves the following lemma. 
\begin{lemma} \label{rami}
    A primitive representation of a branch $P_c$ of the curve (\ref{quozientebella}) centered at the origin $O$ and tangent to the $\eta$-axis is
    \begin{equation} \label{ramoorgineY}
        \left(\xi = ct^2+ 2ct^{\nicefrac{(q+1)}{2}}+\cdots, \eta = t\right),
    \end{equation}
    with $c\in\mathbb{F}_{q^2}^{\ast}$, $c^{\nicefrac{(q-1)}{2}}=-1$.
    A primitive representation of a branch $P_0$ of the curve (\ref{quozientebella}) centered at the origin $O$ and tangent to the $\xi$-axis is
    \begin{equation} \label{ramoorgineX}
        \left(\xi = t , \eta = t^{\nicefrac{(q+3)}{4}}+\cdots\right).
    \end{equation}
A primitive representation of a branch of the curve (\ref{quozientebella}) centered at the point at infinity $P_\infty$ is
\begin{equation} \label{ramoinf}
\left(\xi=\frac{1}{t^q},\eta=\frac{t^{\nicefrac{(q-1)}{4}}}{t^q}+\cdots\right).
\end{equation}
By a little abuse of notation, we denote that branch by the same letter $P_\infty$.
%In homogeneous coordinates
%\begin{equation} \label{ramoinfomo}
%(1:t^{\nicefrac{(q-1)}{4}}+\cdots:t^q+\cdots).
%\end{equation}
\end{lemma}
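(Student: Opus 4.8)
The lemma asserts explicit primitive branch representations for the branches of the curve $\cF$ of equation (\ref{quozientebella}), namely $\eta^q+\eta\xi^{(q-1)/2}-\xi^{q-(q-1)/4}=0$, centered at the singular points $O=(0,0)$ and $P_\infty=(1:0:0)$. There are three types: branches at $O$ tangent to the $\eta$-axis (parametrized by (\ref{ramoorgineY})), the branch at $O$ tangent to the $\xi$-axis ((\ref{ramoorgineX})), and the branch at $P_\infty$ ((\ref{ramoinf})).

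Let me think about how to establish each of these.

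**Branches at $O$ tangent to the $\eta$-axis.** Here $\eta$ should be the uniformizing parameter, so I set $\eta=t$ and seek $\xi=\xi(t)\in\mathbb{F}[[t]]$ with $\xi(0)=0$ solving the equation. Substituting $\eta=t$ into (\ref{quozientebella}):
$$t^q + t\,\xi^{(q-1)/2} = \xi^{q-(q-1)/4}.$$
Since the branch is tangent to the $\eta$-axis (the axis $\xi=0$), I expect $\xi$ to vanish to order $\geq 2$ in $t$. Writing $\xi=ct^2+\cdots$, the term $t\,\xi^{(q-1)/2}$ has order $1+(q-1)=q$, matching $t^q$; the right side $\xi^{q-(q-1)/4}$ has much higher order. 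So the leading balance is $t^q+t\cdot c^{(q-1)/2}t^{q-1}=0$ at order $q$, forcing $1+c^{(q-1)/2}=0$, i.e. $c^{(q-1)/2}=-1$. This pins down the condition on $c$. The next correction term $2ct^{(q+1)/2}$ should follow by pushing the expansion one step further. I would verify the stated second term by a Hensel-type / successive-approximation argument, tracking which power of $t$ the correction first appears.

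**Branch at $O$ tangent to the $\xi$-axis, and the branch at $P_\infty$.** For the $\xi$-axis-tangent branch, now $\xi=t$ is the parameter and I solve for $\eta=\eta(t)$ with $\eta(0)=0$: the equation becomes $\eta^q+\eta\,t^{(q-1)/2}=t^{q-(q-1)/4}$. The dominant balance giving the leading term comes from $\eta^q \sim t^{q-(q-1)/4}$, whence $\eta\sim t^{1-(q-1)/(4q)}$—but I should instead recognize that $\eta\,t^{(q-1)/2}=t^{q-(q-1)/4}$ gives $\eta\sim t^{(q+3)/4}$, which is the lower-order (dominant) balance since $(q+3)/4 < q\cdot(q+3)/4$. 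I would check which of the two nonlinear terms dominates and confirm the leading exponent $(q+3)/4$. The most efficient route, however, is to exploit the covering $\cH_q\to\cF$: the branches of $\cF$ at $O$ and $P_\infty$ are images of branches of the nonsingular Hermitian curve $\cH_q$, whose local expansions are classical and explicit. Since $\xi=x^4$ and $\tau=x^2/y$ with $\eta=\xi/\tau=x^2/y$... (here $\eta=\xi\tau^{-1}=x^4\cdot y/x^2=x^2 y$), I can substitute the known Hermitian branch representations and read off the $\cF$-representations directly, which also explains the ramification structure described in the surrounding text.

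**Main obstacle.** The routine part is the leading-order balancing; the delicate part is justifying the \emph{exact} second terms, e.g. the $2ct^{(q+1)/2}$ in (\ref{ramoorgineY}), and confirming that each stated representation is \emph{primitive} (i.e. the parametrization is not a reparametrization of lower degree). Primitivity follows once I verify that $\gcd$ of the exponents appearing is $1$ and that the parameter is a genuine uniformizer. I expect the cleanest verification to come via the Hermitian cover: tracking the local parameter of $\cH_q$ through the rational maps $\xi=x^4$, $\eta=x^2y$ yields the $\cF$-expansions together with their ramification indices ($2$ at $O$ for the $\eta$-tangent branches, matching the orbit sizes, and the stated indices at $P_\infty$), thereby confirming both the expansions and their primitivity simultaneously; the bookkeeping of fractional exponents under $q\equiv 1\pmod 4$ is where care is most needed.
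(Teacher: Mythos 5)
Your overall strategy --- normalize so that one coordinate is the uniformizing parameter, balance the lowest-order terms to determine the leading coefficient and exponent, and cross-check everything through the degree-$4$ cover $\cH_q\to\cF$ --- is exactly the ``straightforward computation'' the paper invokes without printing any details, and your treatment of the leading terms of all three branches is correct: the condition $c^{(q-1)/2}=-1$, the exponent $\frac{q+3}{4}$ in (\ref{ramoorgineX}), and the exponents in (\ref{ramoinf}) (whose primitivity follows from $\gcd\bigl(q,\frac{q-1}{4}\bigr)=1$, as you note).

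The gap is precisely the step you defer. If you push the expansion of (\ref{ramoorgineY}) one more step you do \emph{not} recover $2ct^{(q+1)/2}$. Write $\eta=t$, $\xi=ct^2+dt^k+\cdots$. After the $t^q$-terms cancel, $\eta^q+\eta\xi^{(q-1)/2}=\frac{q-1}{2}c^{(q-3)/2}d\,t^{\,q-2+k}+\cdots$, while $\xi^{\,q-(q-1)/4}=c^{(3q+1)/4}t^{(3q+1)/2}+\cdots$; since $\frac{q-1}{2}\equiv-\frac{1}{2}\not\equiv 0\pmod p$, matching orders forces $q-2+k=\frac{3q+1}{2}$, that is $k=\frac{q+5}{2}$, with $d=-2c^{(q+7)/4}$. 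The cover confirms this: at $(0,b)\in\cH_q$ one has $\eta=x^2y=bs^2+s^{q+3}+\cdots$ and $\xi=s^4$ (note your first formula $\eta=\xi/\tau=x^2/y$ is wrong and would derail this computation; the correct one, which you give parenthetically, is $\eta=x^2y$), and inverting yields $\xi=b^{-2}t^2-2b^{-(q+7)/2}t^{(q+5)/2}+\cdots$. A global sanity check gives the same answer: $\deg\div_\infty(\xi-c\eta^2)=\frac{3q+1}{2}$, and summing the zero orders at the branches over $O$ forces $v_{P_c}(\xi-c\eta^2)=\frac{q+5}{2}$. So the second term stated in (\ref{ramoorgineY}) is wrong in both exponent and coefficient; a correct proof must either replace it by $-2c^{(q+7)/4}t^{(q+5)/2}$ or truncate at $ct^2+\cdots$. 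This is not a cosmetic issue: with the corrected exponent, $\eta^2/(\xi-c\eta^2)$ has pole order $\frac{q+1}{2}$ at $P_c$ rather than $\frac{q-3}{2}$, which affects the non-gap claimed in Proposition \ref{pro26092024B} and the order sequence at $Q_c$ derived from it.
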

As a consequence of (\ref{ramoinf}), $P_\infty$ is a $\frac{1}{4}(q-1)$-fold point of $\cF$ and the tangent line is the line at infinity. Moreover, in terms of \cite[Section 4.2]{HKT}, (\ref{ramoorgineY}) shows that each branch $P_c$ of $\cF$ centered at $O$ and tangent to the $\eta$-axis has order sequence $(0,1,2)$ while, from (\ref{ramoorgineX}), $(0,1,\frac{1}{4}(q+3))$ is the order sequence of each branch $P_0$ of $\cF$ centered at $O$ and tangent to the $\xi$-axis. If the former branches count $r$ and the latter ones $m$, two equations are obtained: $2r+m=q$ as the point of infinity of the $\xi$-axis is not a point of $\cF$; $r+\frac{1}{4}(q+3)m+\frac{1}{4}(q-1)=q$ as the unique points of $\cF$ on the $\xi$-axis are $O$ and $P_\infty$. From these diophantine equations, $r=\ha(q-1)$ and $m=1$.

%On the other hand, the order sequence at a branch $\gamma$ of $\cF$ centered at $O$ is $(0,1,d_\gamma)$, see \cite[Section 4.2]{HKT}. Since the point of infinity of the $\xi$-axis is not a point of $\cF$, this yields $q=m+\sum d_\gamma$ where $m$ counts the branches $\gamma$ which are tangent to the $\eta$-axis whereas the sum is over the branches $\gamma$ which are tangent to the $\xi$-axis. 
%Actually we hit this bound as one of this such branches is the image of the origin of the Hermitian curve $\mathcal{H}_q$, and the others are the images of the 2-length orbits of the line $x=0$. The former has tangent $\eta=0$ and the latter $\frac{1}{2}(q-1)$ have tangent $\xi=0$. Straightforward computations show that the order sequences are respectively $(0,j,i)=(0,1, \frac{1}{4}(q+3))$ and $(0,j,i)=(0,1,2)$.
%The point at infinity $P_\infty$, whose multiplicity is $\frac{1}{4}(q-1)$, is the center of just one branch and it is the image of the point at infinity of the Hermitian curve $\mathcal{H}_q$.

The image of a branch $P_c$ centered at the origin $O$ with tangent the $\eta$-axis under the automorphism $\Psi$ defined in Lemma \ref{automorfismiquoziente} is the  branch with primitive representation
\begin{equation*}
    \left(\xi=\theta c t^2+\cdots, \eta=\theta^{\nicefrac{(q+3)}{4}}t\right)
\end{equation*}
centered at the origin.
Replacing $\theta^{(q+3)/4}t$ by $t^{\ast}$, an equivalent representation is 
\begin{equation} \label{15}
    \left(\xi=\frac{1}{\theta^{\nicefrac{(q+1)}{2}}}ct^{\ast 2}+\cdots, \eta=t^{\ast}\right).
\end{equation}
Since $\left(\frac{1}{\theta^{\nicefrac{(q+1)}{2}}}c\right)^{\nicefrac{(q-1)}{2}}=-1$, replacing $\frac{1}{\theta^{\nicefrac{(q+1)}{2}}}c$ by $c'$, the branch represented by (\ref{15}) is the branch centered at the origin and tangent to the $\eta$-axis with representation
\begin{equation*}
    \left(\xi=c't^{\ast 2}+\cdots, \eta=t^{\ast}\right).
\end{equation*}
The image under the automorphism $\Theta$ defined in Lemma \ref{automorfismiquoziente2} is the branch with primitive representation
\begin{equation*}
\left(\xi=c^3 t^2+\cdots, \eta=c^2 t+\cdots\right)
\end{equation*}
centered at the origin.
Replacing $c^2t+\cdots$ by $t^{\ast}$, an equivalent representation is 
\begin{equation}\label{16}
\left(\xi=\frac{1}{c} t^{\ast 2}+\cdots, \eta=t^{\ast}\right).
\end{equation}
Since $\left(\frac{1}{c}\right)^{\nicefrac{(q-1)}{2}}=-1$, replacing $\frac{1}{c}$ by $c'$, the branch represented by (\ref{16}) is the branch centered at the origin and tangent to the $\eta$-axis with representation
\begin{equation*}
    \left(\xi=c't^{\ast 2}+\cdots, \eta=t^{\ast}\right).
\end{equation*}
The image of the branch $P_0$ centered at the origin $O$ with tangent the $\xi$-axis under the automorphism $\Psi$ defined in Lemma \ref{automorfismiquoziente} is the  branch with primitive representation
\begin{equation*}
\left(\xi=\theta t, \eta=\theta^{\nicefrac{(q+3)}{4}}t^{\nicefrac{(q+3)}{4}}+\cdots\right)
\end{equation*}
centered at the origin.
Replacing $\theta t$ by $t^{\ast}$, an equivalent representation is
\begin{equation*}
    \left(\xi=t^{\ast}, \eta=t^{\ast\nicefrac{(q+3)}{4}}+\cdots\right).
\end{equation*}
The image under the automorphism $\Theta$ defined in Lemma \ref{automorfismiquoziente2} is the branch with primitive representation
\begin{equation*}
    \left(\xi=\frac{1}{t^q},\eta=\frac{t^{\nicefrac{(q-1)}{4}}}{t^q}+\cdots\right)
\end{equation*}
centered at the point at infinity $P_\infty$.

The image of the branch centered at the point at infinity $P_\infty$ under the automorphism $\Psi$ defined in Lemma \ref{automorfismiquoziente} is a branch with primitive representation
\begin{equation*}
\left(\xi=\theta\frac{1}{t^q}, \eta=\theta^{\nicefrac{(q+3)}{4}}\frac{t^{\nicefrac{(q-1)}{4}}}{t^q}+\cdots\right)
\end{equation*}
centered at the point at infinity.
An equivalent representation is
\begin{equation*}
\left(\xi=\frac{1}{t^{\ast q}}, \eta=\frac{t^{\ast\nicefrac{(q-1)}{4}}}{t^{\ast q}}+\cdots\right).
\end{equation*}

The image under the automorphism $\Theta$ defined in Lemma \ref{automorfismiquoziente2} is the branch with primitive representation
\begin{equation*}
    \left(\xi=t, \eta=t^{\nicefrac{(q+3)}{4}}+\cdots\right)
\end{equation*}
centered at the origin.

\section{Weierstrass semigroup and natural embedding}
As we have seen in Section \ref{secCAG}, $P_\infty$ is the center of a unique branch of $\cF$. By a little abuse of terminology, $P_\infty$ also stands for that branch. Let $H\left(P_\infty\right)$ be the Weierstrass semigroup of $\cF$ at $P_\infty$.
\begin{lemma}
\label{lemWS} $\ha (q+1),q-\frac{1}{4}(q-1),q\in H\left(P_\infty\right)$. 
\end{lemma}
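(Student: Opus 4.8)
The plan is to produce, for each of the three target values, an explicit function on $\cF$ whose only pole is at $P_\infty$; its pole order then certifies membership in $H(P_\infty)$. The structural input I would rely on is that $\cF=\cH_q/\Phi$ with $|\Phi|=4$ and that $P_\infty$ is the image of the place $Y_\infty=(0:1:0)$ of $\cH_q$, which is fixed by all of $\Phi$. Hence $Y_\infty$ is the unique place of $\cH_q$ lying over $P_\infty$, the cover is totally (and, since $p$ is odd, tamely) ramified there with $e(Y_\infty\mid P_\infty)=4$, and for every $\Phi$-invariant $f\in\mathbb{F}_{q^2}(\cH_q)$ one has $f\in\mathbb{F}_{q^2}(\cF)$ with $\ord_{P_\infty}(f)=\tfrac14\ord_{Y_\infty}(f)$.

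Next I would use the coordinate functions coming from the construction in Lemma~\ref{lem25092024}, namely $\xi=x^4$ and $\eta=\xi/\tau=x^2y$, together with $y^2=\eta^2/\xi$. Each is $\Phi$-invariant, and since $x,y$ have a pole on $\cH_q$ only at $Y_\infty$ (of orders $q$ and $q+1$), the same holds for $\xi,\eta,y^2$. Dividing the upstairs pole orders by $e=4$ I would obtain
\begin{align*}
\ord_{P_\infty}(\xi)&=\tfrac14(-4q)=-q, \\
\ord_{P_\infty}(\eta)&=\tfrac14\bigl(-(3q+1)\bigr)=-\bigl(q-\tfrac14(q-1)\bigr), \\
\ord_{P_\infty}(y^2)&=\tfrac14\bigl(-2(q+1)\bigr)=-\ha(q+1),
\end{align*}
which give $q,\ q-\tfrac14(q-1),\ \ha(q+1)\in H(P_\infty)$. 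The integrality of these orders is exactly the divisibility $4\mid 4q,\ 4\mid 3q+1,\ 4\mid 2(q+1)$, all of which hold because $q\equiv1\pmod4$.

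The only genuinely global point --- and the step I expect to require the most care --- is the assertion that none of $\xi,\eta,y^2$ has a pole away from $P_\infty$. This is precisely what the total ramification of $Y_\infty$ delivers: a pole of a function descended from $\cH_q$ can only lie over a pole upstairs, and upstairs the sole pole is at $Y_\infty$. As a consistency check one can instead read the leading terms off the branch representation~(\ref{ramoinf}): in the local parameter $t$, $\xi=t^{-q}$, $\eta=t^{(q-1)/4-q}+\cdots$ and $\eta^2/\xi=t^{-(q+1)/2}+\cdots$, reproducing the three pole orders. That data is purely local, however, so it is the descent argument that upgrades these orders to actual non-gaps at $P_\infty$.
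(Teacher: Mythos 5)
Your proof is correct, but it runs along a genuinely different track from the paper's. You work upstairs on $\cH_q$: since $\Phi$ fixes $Y_\infty$, the degree-$4$ cover $\cH_q\to\cF$ is totally (and tamely) ramified there, so the $\Phi$-invariant functions $\xi=x^4$, $\eta=x^2y$ and $\eta^2/\xi=y^2$ — whose only pole on $\cH_q$ is $Y_\infty$, of orders $4q$, $3q+1$ and $2(q+1)$ respectively — descend to functions on $\cF$ with only pole $P_\infty$ and pole orders obtained by dividing by $e=4$; the divisibility checks are exactly $q\equiv 1\pmod 4$. The paper instead stays entirely on the plane model $\cF$: it computes the intersection divisors $\cF\circ\ell_1$, $\cF\circ\ell_2$ and $\cF\circ\ell_\infty$ via B\'ezout's theorem and the branch representations of Lemma \ref{rami}, assembles $\div(\eta)$ and $\div(\eta^2/\xi)$ from them, and obtains $q\in H(P_\infty)$ by citing maximality rather than from $\div(\xi)$. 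The trade-off is this: the paper's computation delivers the complete principal divisors, including the zero loci at $\gamma_0$ and the $\gamma_i$ (e.g.\ $\div(\eta^2/\xi)=\ha(q+1)\gamma_0-\ha(q+1)P_\infty$), which are reused later for the Weierstrass data at the branches over $O$ and for the order sequence; your argument is shorter, avoids enumerating the branches at the singular origin, and gives the "no other poles" statement for free from the covering. It is also precisely the mechanism the paper itself invokes for the converse inequality in Theorem \ref{lemWS1}, so the two halves of the semigroup computation would mesh seamlessly if done your way.
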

\begin{proof} To prove that $\ha (q+1)\in H\left(P_\infty\right)$, it is enough to show that $P_\infty$ is the only pole of $\eta^2/\xi$ and that the relative pole number is $\ha (q+1)$. According to Section \ref{secCAG}, let $\gamma_1,\ldots,\gamma_{(q-1)/2}$ be the branches of $\cF$ centered at $O$ and tangent to the $\eta$-axis, and $\gamma_0$ the branch of $\cF$ centered at $O$ and tangent to $\xi$-axis. 
By Lemma \ref{rami}, B\'ezout's theorem together with \cite[Theorem  4.36 (ii)]{HKT} yield the following equation on the intersection divisor $\cF\circ \ell_1$ where $\ell_1$ is the line of equation $\eta=0$, i.e. the $\xi$-axis: 
\begin{equation}\label{div1}
    \cF\circ \ell_1 =\sum I\left(P,\ell\cap\gamma\right)\gamma = \sum_{i=1}^{(q-1)/2} \gamma_i+\frac{1}{4}(q+3)\gamma_0+\frac{1}{4}(q-1)P_\infty.
\end{equation}
Also, let $\ell_2$ denote the line of equation $\xi=0$, i.e. the $\eta$-axis. Similarly, from Lemma \ref{rami} together with  B\'ezout's theorem and \cite[Theorem  4.36 (ii)]{HKT}, 
\begin{equation}\label{div2}
    \cF\circ \ell_2 =\sum I\left(P,\ell\cap\gamma\right)\gamma = \sum_{i=1}^{(q-1)/2} 2\gamma_i+\gamma_0.
\end{equation}
Moreover, let $\ell_\infty$ denote the line at infinity. Then 
\begin{equation}\label{div3}
    \cF\circ \ell_\infty=qP_\infty.
\end{equation}
From (\ref{div1}), (\ref{div2}), (\ref{div3}) and \cite[Theorem 6.42]{HKT}, 

$$\div\left(\frac{\eta^2}{\xi}\right)=2\,\div(\eta)-\div(\xi)=
2\left(\sum_{i=1}^{(q-1)/2} \gamma_i+\frac{1}{4}(q+3)\gamma_0+\frac{1}{4}(q-1)P_\infty-qP_\infty\right)-\left(\sum_{i=1}^{(q-1)/2} 2\gamma_i+\gamma_0-qP_\infty\right).$$
From this,
$$\div\left(\frac{\eta^2}{\xi}\right)=\ha(q+1)\gamma_0-\ha(q+1)P_\infty$$
which shows that $\ha(q+1)$ belongs indeed to $H\left(P_\infty\right)$. 

From (\ref{div1}), (\ref{div3}) and \cite[Theorem 6.42]{HKT},
$$\div(\eta)= \sum_{i=1}^{(q-1)/2} \gamma_i+\frac{1}{4}(q+3)\gamma_0+\frac{1}{4}(q-1)P_\infty-qP_\infty= \sum_{i=1}^{(q-1)/2} \gamma_i+\frac{1}{4}(q+3)\gamma_0-(q-\frac{1}{4}(q-1))P_\infty$$
whence $q-\frac{1}{4}(q-1)\in H\left(P_\infty\right)$.

Moreover, since $\cF$ is an $\mathbb{F}_{q^2}$-maximal curve, \cite[Proposition 10.2]{HKT} yields $q\in H\left(P_\infty\right)$. 
\end{proof}
\begin{rem}
{\em{It seems plausible that $H\left(P_\infty\right)$ is generated by the integers in Lemma \ref{lemWS}.}}
\end{rem}
\begin{theorem}
\label{lemWS1} The numbers in $H\left(P_\infty\right)$ in the interval $[1,q]$ are exactly three, namely  $\ha(q+1),q-\frac{1}{4}(q-1)$ and $q$.
\end{theorem}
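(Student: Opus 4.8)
We must show that the Weierstrass semigroup $H(P_\infty)$ of $\cF$ contains \emph{exactly} three non-gaps in the interval $[1,q]$, namely $\ha(q+1)$, $q-\tfrac14(q-1)$ and $q$. By Lemma \ref{lemWS} these three integers already lie in $H(P_\infty)$, so the entire content of the statement is the \emph{upper bound}: no other integer in $[1,q]$ can be a non-gap.

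**Approach.** The plan is to exploit the Frobenius/natural-embedding machinery of Result \ref{res25092024} together with the fact that $\cF$ is $\mathbb{F}_{q^2}$-maximal. Write $m_1(P_\infty)<m_2(P_\infty)<m_3(P_\infty)\le q$ for the three largest non-gaps at $P_\infty$ that are strictly smaller than $q+1$; Result \ref{res25092024} asserts that there are precisely three of them (since the Frobenius dimension is $r=4$, as established in part (I) of the main theorem) and that they are governed by the relation $j_{4-i}+m_i(P_\infty)=q+1$. So the real task splits into two independent pieces. First I would argue that the three non-gaps exhibited in Lemma \ref{lemWS} are \emph{exactly} the values $m_1,m_2,m_3$, i.e. that Result \ref{res25092024} forces the count to be three and no more can squeeze into $[1,q]$. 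Since $\ha(q+1)<q-\tfrac14(q-1)<q$ are three distinct integers in $[1,q]$ and Result \ref{res25092024} says there are at most three such non-gaps below $q+1$, the three we have must be all of them. Second, I must rule out any further non-gap lying strictly between the listed values.

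**Key steps.** Concretely I would proceed as follows. (i) Invoke part (I) of Theorem \ref{mt16102024}, already to be proved, to fix that the Frobenius embedding $\cX$ of $\cF$ has dimension $r=4$. (ii) Apply Result \ref{res25092024}: for the $\mathbb{F}_{q^2}$-rational place $P_\infty$ the order sequence $(0,j_1,j_2,j_3,j_4)$ of $\cX$ at $P_\infty$ satisfies $j_{4-i}+m_i(P_\infty)=q+1$ for $i=0,1,2,3$, and in particular the non-gaps below $q+1$ number \emph{exactly three}, being $q+1-j_3,\,q+1-j_2,\,q+1-j_1$. (iii) Read off the orders $j_1,j_2,j_3$ of $\cX$ at the image of $P_\infty$ from the branch representation (\ref{ramoinf}) of $P_\infty$ computed in Lemma \ref{rami}, exactly as the analogous order-sequence computations were carried out at $O$ following Lemma \ref{rami}; this yields three explicit orders, whose complements under $q+1$ must coincide with the three values $\ha(q+1),\,q-\tfrac14(q-1),\,q$. (iv) Conclude that these are all the non-gaps in $[1,q]$, since $q$ itself is the non-gap corresponding to $j_1=1$ (the tangent order), and the remaining two are pinned down by $j_2,j_3$.

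**Main obstacle.** The delicate point is step (iii): one must correctly extract the Hermitian order sequence $(j_0,\dots,j_4)$ of the \emph{natural embedding} $\cX$ at $P_\infty$ from the local branch data $(\xi,\eta)=\bigl(t^{-q},\,t^{\nicefrac{(q-1)}{4}}t^{-q}+\cdots\bigr)$, taking into account that the embedding is by a basis of $\mathcal{L}((q+1)P_\infty)$ rather than by $(\xi,\eta)$ directly. This requires identifying which monomials in $\xi,\eta$ (equivalently, which functions with pole divisor supported at $P_\infty$ of pole order $\le q+1$) give the intersection multiplicities, and checking that the three pole numbers $\ha(q+1),\,q-\tfrac14(q-1),\,q$ are precisely the ones arising; the potential pitfall is an off-by-one or a characteristic-$p$ subtlety (the excerpt already excludes $p\le 3$ in part (I), so ordinary derivatives are legitimate and the order count of $r+1=5$ is safe). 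Once the order sequence at $P_\infty$ is nailed down, the bound that there are at most three non-gaps below $q+1$ is immediate from Result \ref{res25092024}, and combining with Lemma \ref{lemWS} closes the argument.
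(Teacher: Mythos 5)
Your proposal is circular at its key step. The existence half (via Lemma \ref{lemWS}) is fine, but for the upper bound you invoke the fact that the Frobenius dimension of $\cF$ equals four, i.e.\ part (I) of Theorem \ref{mt16102024}, in order to conclude from Result \ref{res25092024} that there are exactly three non-gaps below $q+1$. In the paper the logical order is the reverse: the statement ``the Frobenius dimension equals four'' is deduced \emph{as a corollary of} Theorem \ref{lemWS1}, because $r=\dim|(q+1)P_\infty|=\ell((q+1)P_\infty)-1$ is precisely the number of non-gaps at $P_\infty$ in $[1,q]$ plus one, and $(q+1)P_\infty$ is a special divisor here (its degree is far below $2\mathfrak{g}-2$), so Riemann--Roch alone cannot give you $r$ without the gap structure. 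Lemma \ref{lemWS} only yields $r\ge 4$; the content of the theorem is exactly the inequality $r\le 4$, so you cannot assume $r=4$ as an input. Your step (iii) has the same problem in a different guise: to read off orders of the natural embedding at $P_\infty$ you must first know a basis of $\mathcal{L}((q+1)P_\infty)$, which again presupposes knowledge of the non-gaps.

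The paper closes the gap by pulling back to the Hermitian curve. Any $u\in\mathbb{F}_{q^2}(\cF)$ with pole divisor $mP_\infty$ lies in $\mathbb{F}_{q^2}(\cH_q)$, and since the degree-$4$ cover $\cH_q|\cF$ is totally ramified over $P_\infty$ (the point of $\cH_q$ at infinity is fixed by $\Phi$), its only pole on $\cH_q$ is the place $\delta$ at infinity, with $v_\delta(u)=4\,v_{P_\infty}(u)$. Hence $4m$ must lie in the Weierstrass semigroup $\langle q,q+1\rangle$ of $\cH_q$ at $\delta$, i.e.\ $4m=\alpha q+\beta(q+1)$ with $\alpha,\beta\ge 0$; for $q\equiv 1\pmod 4$ and $m\le q$ a short enumeration leaves only $m=\ha(q+1)$, $m=q-\frac{1}{4}(q-1)$ and $m=q$. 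This is the missing upper-bound argument, and it uses no information about the Frobenius embedding at all. If you want to repair your write-up, replace steps (i)--(iii) with this ramification argument; the appeal to Result \ref{res25092024} then becomes unnecessary for this theorem and instead flows out of it.
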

\begin{proof} Let $\omega\in H\left(P_\infty\right)$ and take $u\in\mathbb{F}_{q^2}(\xi,\eta)$ such that $\div(u)=\omega P_\infty$. Observe that $u$ can also be viewed as an element of $\mathbb{F}_{q^2}(\cH_q)$.
Since the cover $\cH_q|\cF$ is totally ramified at $P_\infty$, $u$ has also a unique pole in $\mathbb{F}_{q^2}(\cH_q)$. This pole is the branch $\delta$ of $\cH_q$ centered at the unique point  of $\cH_q$ at infinity. Since the cover $\cH_q|\cF$ has degree $4$, \cite[Lemma 7.19]{HKT} yields
\begin{equation*}
  v_\delta(u)=\text{ord}_{\delta}(u) =4 \,\text{ord}_{P_\infty}(u)=4\,v_{P_\infty}(u).
\end{equation*}
As the Weierstrass semigroup of the Hermitian curve at any $\mathbb{F}_{q^2}$-rational point is generated by $q$ and $q+1$, for each non-gap $m$ at the point $P_{\infty}$ there exist positive integers $\alpha,\beta$ such that $4m=\alpha q+\beta (q+1)$. For $m\le q$ this is only possible for $\ha(q+1),q-\frac{1}{4}(q-1)$ and $q$. 
\end{proof}
We prove the analog of Theorem \ref{lemWS1} for the branches centered at $O$ and tangent to the $\eta$-axis. 
\begin{proposition}
\label{pro26092024B} At each of the $\ha(q-1)$ branches centered at $O$ and tangent to the $\eta$-axis, $\ha(q-3)$ is a non-gap.
\end{proposition}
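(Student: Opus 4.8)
The plan is to realise $\ha(q-3)$ as a pole order at $\gamma_i=P_c$ by an explicit function of $\mathbb{F}_{q^2}(\cF)=\mathbb{F}_{q^2}(\xi,\eta)$; the construction is uniform in the parameter $c$ (with $c^{(q-1)/2}=-1$), so it is enough to carry it out for one branch in (\ref{ramoorgineY}). First I would lift everything to $\cH_q$. Since $\eta=x^2y$ and $\xi=x^4$, the branches tangent to the $\eta$-axis are the images of the $\Phi$-orbits $\{Q_+,Q_-\}$, with $Q_\pm=(0,\pm b)$, $b^{q-1}=-1$ and $c=b^{-2}$, lying over $O$. As $\varphi_\lambda^2=\varphi_{-1}\colon(x,y)\mapsto(-x,y)$ fixes each $Q_\pm$, the cover $\cH_q\mid\cF$ is ramified at $\gamma_i$ with index $e=2$ and carries exactly the two points $Q_+,Q_-$ above $\gamma_i$, whence $v_{Q_\pm}(u)=2\,v_{\gamma_i}(u)$ for all $u\in\mathbb{F}_{q^2}(\cF)$. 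Consequently $\ha(q-3)\in H(\gamma_i)$ is equivalent to the existence of a $\Phi$-invariant function on $\cH_q$ with pole divisor exactly $(q-3)(Q_++Q_-)$.

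For the candidate I would use the factorisation $\xi-c\eta^2=x^4(1-cy^2)=-b^{-2}x^4(y-b)(y+b)$. On $\cH_q$ one has $\div(y-b)=(q+1)Q_+-(q+1)Y_\infty$, $\div(y+b)=(q+1)Q_--(q+1)Y_\infty$ and $\div(x)=(0,0)+\sum_{b'^{q-1}=-1}(0,b')-qY_\infty$; pushing these down gives $v_{\gamma_i}(\xi-c\eta^2)=\ha(q+5)$, while $\xi-c\eta^2$ has order $2$ at each $\gamma_j$ $(j\neq i)$, order $1$ at $\gamma_0$, and a pole of order $\ha(3q+1)$ at $P_\infty$. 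Combining this with the divisors of $\xi$ and $\eta$ read off from (\ref{div1})--(\ref{div3}), the function $F=\xi^2/(\xi-c\eta^2)$ has
\begin{equation*}
\div(F)=-\ha(q-3)\gamma_i+2\!\sum_{j\neq i}\gamma_j+\gamma_0-\ha(q-1)P_\infty ,
\end{equation*}
so $F$ does produce a pole of the required order $\ha(q-3)$ at $\gamma_i$.

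The hard part will be to get rid of the second pole of $F$, the one of order $\ha(q-1)$ at $P_\infty$, without disturbing the pole at $\gamma_i$. A naive subtraction fails because, by Theorem \ref{lemWS1}, $\ha(q-1)$ is a gap at $P_\infty$, so no function whose only pole is $P_\infty$ shares the principal part of $F$ there, and every correction term brings back poles at $\gamma_0$ or at $Q_\pm$. The route I would take is to settle the question on the Hermitian model by computing $\dim_{\mathbb{F}_{q^2}}\cL((q-3)(Q_++Q_-))$ together with its $\Phi$-invariant part: writing a regular differential of $\cH_q$ as $\sum a_{ij}x^iy^j\,dx$ and noting that such a differential has order equal to $i$ at both $Q_+$ and $Q_-$, the vanishing conditions decouple according to the exponent $i$, which renders the Riemann--Roch computation completely explicit. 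Extracting from this space a non-constant $\Phi$-invariant function with the correct pole divisor---equivalently, tracking the $\Phi$-eigenvalues on a monomial basis exactly as was done for $P_\infty$ in Lemma \ref{lemWS} and Theorem \ref{lemWS1}---is the decisive step that would complete the proof.
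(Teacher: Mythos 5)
Your reduction to the Hermitian model and the divisor computations you carry out are correct; in particular $v_{\gamma_i}(\xi-c\eta^2)=\ha(q+5)$ is right (a degree count on $\div(\xi-c\eta^2)$, whose only pole is the one of order $\ha(3q+1)$ at $P_\infty$ and whose remaining zeros are $2\gamma_j$ for $j\ne i$ and $\gamma_0$, forces this value). But the proof is not finished: your function $F=\xi^2/(\xi-c\eta^2)$ has the unwanted pole of order $\ha(q-1)$ at $P_\infty$, and the concluding Riemann--Roch/$\Phi$-eigenvalue argument that is supposed to remove it is only announced, never executed, so as it stands nothing is proved. The paper avoids your difficulty by taking $\varphi=\eta^2/(\xi-c\eta^2)$ instead: since $v_{P_\infty}(\eta^2)=v_{P_\infty}(\xi-c\eta^2)=-\ha(3q+1)$, that function is regular (equal to $-1/c$) at $P_\infty$, and its only pole is at $\gamma_i=P_c$.

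However, your computation of $v_{\gamma_i}(\xi-c\eta^2)$ actually exposes a problem with the statement itself. With $v_{\gamma_i}(\eta^2)=2$ and $v_{\gamma_i}(\xi-c\eta^2)=\ha(q+5)$ one gets $\div(\varphi)=\ha(q+1)\gamma_0-\ha(q+1)\gamma_i$, so what $\varphi$ proves is that $\ha(q+1)$, not $\ha(q-3)$, is a non-gap at $\gamma_i$. The paper's value $\ha(q-3)$ traces back to the expansion (\ref{ramoorgineY}), whose second exponent should be $\ha(q+5)$ rather than $\ha(q+1)$: substituting $\eta=t$ and $\xi=ct^2(1+u)$ into (\ref{quozientebella}) forces $u$ to have order $\ha(q+1)$, hence $\xi-ct^2$ has order $2+\ha(q+1)=\ha(q+5)$, in agreement with your pushdown and with the degree count; note also that the first sentence of the paper's own proof announces pole divisor $\ha(q+1)P_c$, consistent with this. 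For $q=5$ the stated claim is untenable in any case, since $\ha(q-3)=1$ cannot be a non-gap on a curve of genus $2$. So the step you left open cannot be completed; the provable statement is that $\ha(q+1)$ is a non-gap at each $\gamma_i$, and your own method, applied to $\eta^2/(\xi-c\eta^2)$ in place of $\xi^2/(\xi-c\eta^2)$, gives the cleanest proof of that corrected claim.
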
 
\begin{proof} We adopt the notation from Lemma \ref{rami}. For $c\in \mathbb{F}_{q^2}^{\ast}$ with $c^{\nicefrac{(q-1)}{2}}=-1$ let $P_c$ be the branch with primitive representation as in (\ref{ramoorgineY}). 
It is enough to verify that the pole divisor of the function $$\varphi=\frac{\eta^2}{\xi-c\eta^2}$$ is equal to $\ha(q+1)P_c$. Let $P(a,b)$ be an affine point of $\cF$ other than the origin. If $P(a,b)$ is a pole of $\varphi$ then $a-cb^2=0$. Since $P(a,b)$ is a point of $\cF$ and $c^{\nicefrac{(q-1)}{2}}=-1$, this yields 
$$b^q+a^{\nicefrac{(q-1)}{2}}-a^{q-\nicefrac{(q-1)}{4}}=b^q-b^q-a^{q-\nicefrac{(q-1)}{4}}=0$$
whence $a=0$. As $a=0$ implies $b=0$, this contradicts our hypothesis. Next we verify that if $d\in \mathbb{F}_{q^2}^{\ast}$ with $d^{\nicefrac{(q-1)}{2}}=-1$, then the branch $P_d$ centered at $O$ and tangent to the $\eta$-axis is a pole of $\varphi$ if and only if $c=d$. From (\ref{ramoorgineY}), $\varphi$ evaluated at $P_d$
gives 
$$v_{P_d}(\varphi)=v_{P_d}\left(\frac{\eta^2}{\xi-c\eta^2}\right)=\ord_t\left(\frac{t^2}{(d-c)t^2+2dt^{\nicefrac{(q+1)}{2}}+\ldots}\right)=\ord_t\left(\frac{1}{(d-c)+2dt^{\nicefrac{(q-3)}{2}}+\ldots}\right).$$
Thus $\varphi$ has a pole at $P_d$ only for $c=d$, and if this is the case then the pole number is $\ha(q-3)$. 
Next we verify that the branch $P_0$ of $\cF$ centered at $O$ and tangent to the $\xi$-axis is not a pole of $\varphi$. From (\ref{ramoorgineX}), $\varphi$ evaluated at $P_0$ gives 
$$v_{P_0}(\varphi)=v_{P_0}\left(\frac{\eta^2}{\xi-c\eta^2}\right)=
\ord_t\left(\frac{(t^{(q+3)/4}+\ldots)^2}{t-c(t^{(q+3)/4}+\ldots)^2}\right).$$
Therefore $P_0$ is a zero of $\varphi$.

It remains to verify that $P_\infty$ is not a pole of $\varphi$. Write 
$$\varphi=\frac{\eta^2}{\xi-c\eta^2}=\frac{\eta^2}{\xi-c\eta^2}+\frac{1}{c}-\frac{1}{c}
=\frac{1}{c-c^2 \frac{\eta^2}{\xi}}-\frac{1}{c}.$$
As we have seen in the proof of Lemma \ref{lemWS}, $P_\infty$ is a pole of $\eta^2/\xi$. Therefore, $P_\infty$ is a zero of $1/(c-c^2 \eta^2/\xi)$, and hence $\varphi$ evaluated at $P_\infty$ is equal to $-1/c$. This completes the proof.  
\end{proof}
Theorem \ref{lemWS1} and \cite[Section 10.2]{HKT} have the following corollary.
%Since the curve is $\mathbb{F}_{q^2}$-maximal and $P_{\infty}$ is an $\mathbb{F}_{q^2}$-rational point, $q+1\in H(P_{\infty})$; see \cite[Theorem 10.6]{HKT}.
%Let $\mathcal{D}=(q+1)P_{\infty}$ be be the Frobenius linear series of the curve in (\ref{quozientebella}). The pole numbers of $\xi$ and $\eta$ at $P_\infty$ are $q$ and $q-\frac{1}{4}(q-1)$, respectively. Furthermore div$\left(\frac{\eta^2}{\xi}\right)_{\infty}=\frac{1}{2}(q+1)P_\infty$. 
\begin{theorem}
%\label{thFr} 
The (projective) Frobenius dimension of $\cF$ is equal four.  
\end{theorem}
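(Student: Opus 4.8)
The plan is to read off the Frobenius dimension directly from the Weierstrass data computed in Theorem \ref{lemWS1} via Result \ref{res25092024}. Recall that for an $\mathbb{F}_{q^2}$-maximal curve, the Frobenius dimension is $r=\dim(|(q+1)P_0|)$, and by Result \ref{res25092024} this number is controlled by the non-gaps at an $\mathbb{F}_{q^2}$-rational point that are smaller than $q+1$. Concretely, if $0<m_1(P)<m_2(P)<m_3(P)\le q$ are precisely the non-gaps below $q+1$ at such a point, then the order sequence of the Frobenius embedding at $P$ is $(0,1,2,3,q+1)$, forcing $r=4$.

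First I would fix the $\mathbb{F}_{q^2}$-rational branch $P_\infty$ and invoke Theorem \ref{lemWS1}, which states that the elements of $H(P_\infty)$ in the interval $[1,q]$ are exactly the three numbers $\ha(q+1)$, $q-\tfrac14(q-1)$, and $q$. Since $P_\infty$ is the center of a unique branch of $\cF$ and is $\mathbb{F}_{q^2}$-rational, these three values are precisely the non-gaps $m_1(P_\infty)=\ha(q+1)$, $m_2(P_\infty)=q-\tfrac14(q-1)$, $m_3(P_\infty)=q$ smaller than $q+1$ required in Result \ref{res25092024}. That there are exactly three such non-gaps is the crucial input: it matches the count $r-1=3$ of non-gaps below $q+1$ that a Frobenius dimension of $r=4$ demands.

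Next I would apply Result \ref{res25092024} at $P_\infty$: the order sequence $(j_0,j_1,j_2,j_3,j_4)$ is determined by $j_{4-i}+m_i(P_\infty)=q+1$ for $i=1,2,3$, together with $j_0=0$, $j_1=1$, and the fact that $q+1$ is itself an order. Since there are exactly three non-gaps in $(0,q+1)$, the linear series $|(q+1)P_\infty|$ has projective dimension four, and this is by definition the Frobenius dimension of $\cF$. By Result \ref{res25092024} the Frobenius series does not depend on the chosen degree-one place, so the dimension four is intrinsic to $\cF$.

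The main obstacle is not in this deduction, which is essentially bookkeeping once Theorem \ref{lemWS1} is in hand, but in ensuring that $P_\infty$ qualifies as the degree-one $\mathbb{F}_{q^2}$-rational place to which Result \ref{res25092024} applies and that the three listed non-gaps are genuinely \emph{all} the non-gaps strictly between $0$ and $q+1$. This is exactly the content of Theorem \ref{lemWS1} (the word ``exactly'' there does the essential work), so I would cite that theorem for completeness and then conclude that $r=4$.

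\begin{proof}
By Theorem \ref{lemWS1}, the non-gaps of $\cF$ at the $\mathbb{F}_{q^2}$-rational branch $P_\infty$ that are smaller than $q+1$ are exactly the three numbers
\[
m_1(P_\infty)=\ha(q+1),\qquad m_2(P_\infty)=q-\tfrac14(q-1),\qquad m_3(P_\infty)=q.
\]
Since $\cF$ is an $\mathbb{F}_{q^2}$-maximal curve and $P_\infty$ is a degree-one place, Result \ref{res25092024} applies. By that result, $q+1$ is an order of the Frobenius embedding $\cX$ at $P_\infty$, and the order sequence $(j_0=0,j_1,j_2,j_3,j_4)$ satisfies $j_{4-i}+m_i(P_\infty)=q+1$ for $i=1,2,3$. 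As there are precisely three non-gaps in the open interval $(0,q+1)$, the Frobenius linear series $|(q+1)P_\infty|$ has projective dimension four. By Result \ref{res25092024} this series is independent of the chosen degree-one place and is complete, base-point free, simple, and defined over $\mathbb{F}_{q^2}$. Hence the Frobenius dimension of $\cF$ equals four, as claimed.
\end{proof}
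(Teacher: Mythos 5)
Your proposal is correct and follows essentially the same route as the paper, which presents this theorem as a direct corollary of Theorem \ref{lemWS1} together with \cite[Section 10.2]{HKT}: the three non-gaps in $[1,q]$ at $P_\infty$, together with $0$ and $q+1$ (a non-gap at every $\mathbb{F}_{q^2}$-rational point of a maximal curve), give $\ell((q+1)P_\infty)=5$, hence projective dimension four. You merely make explicit the bookkeeping via Result \ref{res25092024} that the paper leaves implicit.
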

From the proof of Theorem \ref{lemWS1}, the Riemann-Roch space of the Frobenius divisor $\mathcal{D}=(q+1)P_\infty$ is 
\begin{equation*}%\label{riemannrochspace}
\mathcal{L}(\mathcal{D})=\Big\langle 1,\xi,\eta,\frac{\eta^2}{\xi},\left(\frac{\eta^2}{\xi}\right)^2\Big\rangle,
\end{equation*}
and the corresponding natural embedding of $\cF$ in the four-dimensional projective space $PG(4,\mathbb{F}_{q^2})=PG(4,q^2)$, see \cite[Section 10.2]{HKT}, is given by 
\begin{equation}
    \label{coordinate}
   \rho:\, (a,b)\mapsto\left (1,a,b,\frac{b^2}{a},\left(\frac{b^2}{a}\right)^2\right),
\end{equation}
or, in homogeneous coordinates,
\begin{equation*} %\label{coordinateomo}
    \rho:\, \left(a:b:c\right)\mapsto\left (a^2c^2:a^3c:a^2bc:ab^2c:b^4\right).
\end{equation*}
Let $\cX$ be the (geometrically irreducible, non-singular) curve of degree $q+1$ obtained by the above natural embedding of $\cF$ in $PG\left(4,q^2\right)$. As usual, $\cX$ is viewed as a curve of $PG\left(4,\mathbb{F}\right)$. 
\begin{theorem}
\label{thene} The curve $\cX$ is in the intersection of a Hermitian variety and two quadrics.  
\end{theorem}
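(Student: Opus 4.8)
The plan is to produce the three hypersurfaces separately: the two quadrics are read off directly from the monomial shape of the embedding, while the Hermitian variety is supplied by the natural embedding theorem. Throughout I use the homogeneous form of the natural embedding from (\ref{coordinate}), namely $\rho:(a:b:c)\mapsto (a^2c^2:a^3c:a^2bc:ab^2c:b^4)$, so that a point of $\cX$ has coordinates $(X_0:X_1:X_2:X_3:X_4)=(a^2c^2:a^3c:a^2bc:ab^2c:b^4)$ in $\PG(4,q^2)$.

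First I would obtain the two quadrics by a one-line substitution. Since $(a^2bc)^2=a^4b^2c^2=(a^3c)(ab^2c)$ and $(ab^2c)^2=a^2b^4c^2=(a^2c^2)(b^4)$, every point of $\cX$ lies on
\begin{equation*}
\cQ_1\colon X_2^2-X_1X_3=0,\qquad \cQ_2\colon X_3^2-X_0X_4=0.
\end{equation*}
Equivalently, these are just the identities $u_2^2=u_1u_3$ and $u_3^2=u_0u_4$ among the Riemann--Roch basis $u_0=1,\,u_1=\xi,\,u_2=\eta,\,u_3=\eta^2/\xi,\,u_4=(\eta^2/\xi)^2$ of $\mathcal{L}(\mathcal{D})$, reflecting the fact that each higher coordinate was built multiplicatively from the lower ones; they therefore hold identically on $\cX$.

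For the Hermitian variety I would appeal to the natural embedding theorem. The curve $\cF$ is $\mathbb{F}_{q^2}$-maximal, being a Galois subcover of the Hermitian curve (Result \ref{resth10.2}), and, as established above, its Frobenius dimension equals four with $\cX=\rho(\cF)$ the corresponding Frobenius embedding in $\PG(4,q^2)$. Hence \cite[Section 10.2]{HKT} guarantees that $\cX$ lies on a Hermitian variety $\cH_{4,q}$ of $\PG(4,q^2)$. Putting the three containments together gives $\cX\subseteq \cH_{4,q}\cap\cQ_1\cap\cQ_2$, which is the assertion.

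The point I expect to require the most care is the interplay between the three hypersurfaces rather than any single one of them. By B\'ezout the variety $\cQ_1\cap\cQ_2$ is a surface of degree $4$, whereas $\cX$ has degree $q+1$; since $q+1>4$ here, the two quadrics alone define a surface strictly larger than $\cX$, and the Hermitian variety is genuinely responsible for carving out $\cX$ and for carrying its full degree. This is precisely the degree-$4$ surface relevant to the sharpness of Halphen's theorem mentioned in the introduction. If an explicit Hermitian form were wanted, I would substitute the pull-backs $u_0=1,\,u_1=x^4,\,u_2=x^2y,\,u_3=y^2,\,u_4=y^4$ to $\cH_q$ into a general Hermitian form $\sum_{i,j}a_{ij}X_iX_j^q$ and reduce modulo $y^q+y=x^{q+1}$ to pin down the coefficients $a_{ij}$; but this is the only laborious part and it is not needed for the statement, since the natural embedding theorem already delivers the variety abstractly.
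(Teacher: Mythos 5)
Your proof is correct, and the quadric part is exactly the paper's: both of you read $X_2^2=X_1X_3$ and $X_3^2=X_0X_4$ off the multiplicative structure of the basis $1,\xi,\eta,\eta^2/\xi,(\eta^2/\xi)^2$. Where you genuinely diverge is the Hermitian containment. You invoke the natural embedding theorem abstractly: since $\cF$ is $\mathbb{F}_{q^2}$-maximal and $\cX$ is its Frobenius embedding in $PG(4,q^2)$, some Hermitian variety $\cH_{4,q}$ containing $\cX$ exists, and that already settles the statement as literally phrased. The paper instead writes down the explicit non-degenerate form $-X_1^{q+1}+4X_2^{q+1}-2X_3^{q+1}+X_0X_4^q+X_0^qX_4$, pulls it back along $\sigma$ to a polynomial $F(a,b)$, and checks that $F$ is divisible by the defining polynomial $F_1$ of $\cF$ by exhibiting the factorization $F=F_1F_2F_3F_4$ into the four forms of Lemma \ref{lem16102024}. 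Your route is shorter and avoids a nontrivial computation, but it buys strictly less: the explicit equation and the factorization are precisely what feed Remark \ref{rem16102024A} (the complete intersection $\cH_{4,q}\cap\cQ_1\cap\cQ_2$ splits into the four pairwise $\mathbb{F}_{q^2}$-isomorphic maximal curves $\cF_i$, which is why Lemma \ref{lem16102024} was proved at all), and an explicit variety is also convenient downstream when the automorphism group is pinned inside $\PGU(5,q)$. Your closing observation that the two quadrics cut out an irreducible degree-$4$ surface while the Hermitian variety carries the degree $q+1$ is accurate and is indeed the point of the Halphen remark; if you wanted to recover the paper's full content you would have to carry out the coefficient computation you sketch in your last sentence, at which point the two proofs coincide.
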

\begin{proof}
Fix a homogeneous coordinate system $\left(Y_0:Y_1:Y_2\right)$ in $PG\left(2,\mathbb{F}\right)$ and a homogeneous coordinate system $\left(X_0:X_1:X_2:X_3:X_4\right)$ in $PG\left(4,\mathbb{F}\right)$, and define the map $\sigma: PG\left(2,\mathbb{F}\right)\mapsto PG\left(4,\mathbb{F}\right)$ by
$$X_0=Y_0^2Y_2^2,\, X_1=Y_0^3Y_2,\, X_2=Y_0^2Y_1Y_2,\,X_3=Y_0Y_1^2Y_2\,X_4=Y_1^4.$$
Then $\cX$ is contained in the Hermitian variety $\cH_{4,q}$ of equation
\begin{equation*} 
%\label{eq16102024A}
     -X_1^{q+1}+4X_2^{q+1}-2X_3^{q+1}+X_0X_4^q+X_0^qX_4=0.
\end{equation*}
In fact, if $P=(1:a:b)$ is a generic point of $\cF$ in $PG\left(2,\mathbb{F}\right)$ then its image by $\sigma$ is the point $P'=\left(a^2:a^3:a^2b:ab^2:b^4\right)$ in $PG\left(4,\mathbb{F}\right)$. This point $P'$ lies in the Hermitian variety $\cH_{4,q}$ if and only if 
\begin{equation}
    \label{Fab}
    F(a,b)=-a^{3q+1}+4a^{2q}b^{q+1}-2a^{q-1}b^{2q+2}+b^{4q}+a^{2q-2}b^4=0.
\end{equation}
A straightforward computation shows that $F(a,b)$ is equal to $F_1(a,b)F_2(a,b)F_3(a,b)F_4(a,b)$ where $F_1(X,Y)$, $F_2(X,Y),F_3(X,Y),F_4(X,Y)$ are defined in (\ref{F1ab}).

%\begin{equation*}
   % \label{F4ab}
    % F_4(a,b)=b^{q}-\lambda ba^{\nicefrac{(q-1)}{2}}+\lambda a^{q-%\nicefrac{(q-1)}{4}},
%\end{equation*}
    
%\label{eq16102024}
%\left(b^{q}+\lambda ba^{\nicefrac{(q-1)}{2}}-\lambda a^{q-\nicefrac{(q-1)}{4}}\right)\left(b^{q}-\lambda ba^{\nicefrac{(q-1)}{2}}+\lambda a^{q-\nicefrac{(q-1)}{4}}\right)\left(b^{q}+ba^{\nicefrac{(q-1)}{2}}-a^{q-\nicefrac{(q-1)}{4}}\right)\left(b^{q}- ba^{\nicefrac{(q-1)}{2}}+a^{q-\nicefrac{(q-1)}{4}}\right)
%\end{equation}
From (\ref{quozientebella}) the assertion follows.

Furthermore, $\cX$ is contained in both quadrics $\mathcal{Q}_1$ and  $\mathcal{Q}_2$ of equations respectively
\begin{equation}
\label{quadric1}
X_3^2-X_0X_4=0,
\end{equation}
and 
\begin{equation}
\label{quadric2}
 X_2^2-X_1X_3=0.
\end{equation}
\end{proof}
\begin{rem}
\label{rem16102024A}
\em{
We point out that $\cX$ is a component of the complete intersection of $\cH_{4,q}$ with $Q_1$ and $Q_2$. Take a point $R'=(u_0:u_1:u_2:u_3:u_4)\in PG(4,\mathbb{F})$ lying on $\cH_{4,q}$ and both quadrics $\mathcal{Q}_1$ and $\mathcal{Q}_2$ which is not the image of a point of $\cF$ by the map $\sigma$. 
Such a point $R$ is different from $P_0,P_\infty,P_c$ with $c^{\nicefrac{(q-1)}{2}}=-1$. In particular, $u_0\ne 0$. Therefore, $u_0=1$ may be assumed. Also, $u_1\neq 0$ as $R'\ne P_c$. Let $a=u_1$, $b=u_2$ and $R=(1:a:b)$. We show that $\sigma$ takes $R$ to $R'$. Since
$b^2=a u_3$ and $u_3^2=u_4$, we have 
$$\sigma(R)=\left(1:a:b:\frac{b^2}{a}:\left(\frac{b^2}{{a}}\right)^2\right)=(1:a:b:u_3:u_4).$$  
%It remains to show that $b^q+ba^{\nicefrac{(q-1)}{2}}-a^{q-\nicefrac{(q-1)}{4}}=0$. 
Since $R'\in \cH_{4,q}$, we also have $-a^{q+1}+4b^{q+1}-2u_3^{q+1}+u_4^q+u_4=0$. Replacing $u_3$ by $b^2/a$ and $u_4$ by $u_3^2=(b^2/a)^2$, this gives (\ref{Fab}). As we have shown before, $F(a,b)$ splits into four irreducible factors, $F_i(a,b)$; see (\ref{F1ab}). Let $\cF_i$ be the plane curve of equation $F_i(X,Y)=0$, in particular $\cF_1=\cF$. The complete intersection of $\cH_{4,q}$ with $Q_1$ and $Q_2$ splits into four curves of degree $q+1$. Each of them is an $\mathbb{F}_{q^2}$-maximal curve which are pairwise isomorphic over $\mathbb{F}_{q^2}$ by Lemma \ref{lem16102024}. }
\end{rem}
\begin{proposition}
\label{propoints} The map $\rho$ takes the branch $P_c$ in (\ref{ramoorgineY}) to the point $Q_c=\left(c^2:0:0:c:1\right)$ of $\cX$,
the branch $P_0$ in (\ref{ramoorgineX}) to the point $Q_0=\left(1:0:0:0:0\right)$ of $\cX$ and the branch $P_\infty$ in (\ref{ramoinf}) to the point $Q_\infty=\left(0:0:0:0:1\right)$ of $\cX$.
\end{proposition}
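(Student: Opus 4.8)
The plan is to verify directly that the map $\rho$ in (\ref{coordinate}) sends each of the three specified branches to the claimed point, by substituting the primitive branch representations from Lemma \ref{rami} into the homogeneous form of $\rho$ and reading off the lowest-order (dominant) term. Recall that $\rho$ in homogeneous coordinates is $(a:b:c)\mapsto(a^2c^2:a^3c:a^2bc:ab^2c:b^4)$, but for evaluating a branch it is more convenient to work with the affine form $\rho:(a,b)\mapsto\bigl(1,a,b,\tfrac{b^2}{a},(\tfrac{b^2}{a})^2\bigr)$, treating $(a,b)=(\xi(t),\eta(t))$ as formal power series in $t$.

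First I would treat the branch $P_c$ of (\ref{ramoorgineY}), where $\xi=ct^2+2ct^{\nicefrac{(q+1)}{2}}+\cdots$ and $\eta=t$. Then $\eta^2/\xi = t^2/(ct^2+\cdots)=\tfrac{1}{c}+\cdots$, so the third coordinate tends to $1/c$ and the fourth to $1/c^2$ as $t\to 0$, while $\xi\to 0$ and $\eta\to 0$. Thus $\rho(P_c)=\bigl(1:0:0:\tfrac{1}{c}:\tfrac{1}{c^2}\bigr)$; clearing denominators by multiplying through by $c^2$ gives the point $\bigl(c^2:0:0:c:1\bigr)=Q_c$. Next, for $P_0$ of (\ref{ramoorgineX}), where $\xi=t$ and $\eta=t^{\nicefrac{(q+3)}{4}}+\cdots$, we get $\eta^2/\xi = t^{\nicefrac{(q+3)}{2}-1}+\cdots = t^{\nicefrac{(q+1)}{2}}+\cdots\to 0$, and all of $\eta,\xi,\eta^2/\xi,(\eta^2/\xi)^2$ vanish at $t=0$ while the first coordinate is $1$; hence $\rho(P_0)=(1:0:0:0:0)=Q_0$.

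For the branch $P_\infty$ of (\ref{ramoinf}), with $\xi=t^{-q}$ and $\eta=t^{\nicefrac{(q-1)}{4}}t^{-q}+\cdots = t^{\nicefrac{(q-1)}{4}-q}+\cdots$, everything blows up, so I would pass to the homogeneous coordinates and determine which coordinate has the pole of lowest order (equivalently the highest power of $t$ after clearing denominators). Here $b^4=\eta^4$ has order $4(\tfrac{q-1}{4}-q)=(q-1)-4q=-3q-1$ in $t$, whereas one checks that $a^2c^2,a^3c,a^2bc,ab^2c$ (with $a=\xi,b=\eta,c=1$) all have strictly larger $t$-order, so the last coordinate dominates and $\rho(P_\infty)=(0:0:0:0:1)=Q_\infty$. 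I expect the main obstacle to be this last case: one must be careful to normalize the homogeneous coordinates consistently (multiplying through to clear the common pole) and to confirm via the explicit order computations that the $X_4$-entry indeed has the unique minimal $t$-valuation, so that all other coordinates genuinely vanish relative to it in the limit.
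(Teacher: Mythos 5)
Your proposal is correct and follows essentially the same route as the paper: substitute the primitive branch representations from Lemma \ref{rami} into $\rho$, and read off the center from the lowest-order terms after clearing the common pole (the paper normalizes the affine form $\bigl(1,\xi,\eta,\tfrac{\eta^2}{\xi},(\tfrac{\eta^2}{\xi})^2\bigr)$ by $t^{q+1}$ at $P_\infty$, while you normalize the quartic homogeneous form by $t^{3q+1}$ — the same computation). Your order bookkeeping at $P_\infty$, showing that the $X_4$-entry has the unique minimal $t$-valuation $-(3q+1)$, matches the paper's conclusion that the image branch is $\bigl(t^{q+1}+\cdots:t+\cdots:t^{\nicefrac{(q+3)}{4}}+\cdots:t^{\nicefrac{(q+1)}{2}}+\cdots:1\bigr)$, centered at $(0:0:0:0:1)$.
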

\begin{proof} The image of a branch $P_c$ centered at the origin $O$ with tangent the $\eta$-axis (\ref{ramoorgineY}) under $\rho$ is the  branch with primitive branch representation 
\begin{equation*}
    \left (1:ct^2+2ct^{\nicefrac{(q+1)}{2}}+\cdots:t:\frac{1}{c+2ct^{\nicefrac{(q-3)}{2}}+\cdots}:\left(\frac{1}{c+2ct^{\nicefrac{(q-3)}{2}}+\cdots}\right)^2\right)
\end{equation*}
centered at the point $\left(1:0:0:\frac{1}{c}:\frac{1}{c^2}\right)$.
The image of the branch $P_0$ centered at the origin $O$ with tangent the $\xi$-axis (\ref{ramoorgineX}) under $\rho$ is the  branch with primitive branch representation 
\begin{equation*}
    \left(1:t:t^{\nicefrac{(q+3)}{4}}+\cdots:t^{\nicefrac{(q+1)}{2}}+\cdots:t^{q+1}+\cdots\right)
\end{equation*}
centered at the point $(1:0:0:0:0)$.
The image of the branch centered at the point at infinity $P_{\infty}$ (\ref{ramoinf}) under $\rho$ is the branch with primitive branch representation 
\begin{equation}\label{ramoimmagineinf}
    \left(1:\frac{1}{t^q}:\frac{t^{\nicefrac{(q-1)}{4}}}{t^q}+\cdots:\frac{t^{\nicefrac{(q-1)}{2}}}{t^q}+\cdots:\left(\frac{t^{\nicefrac{(q-1)}{2}}}{t^q}+\cdots\right)^2\right)
\end{equation}
centered at $(0:0:0:0:1)$.
In fact, (\ref{ramoimmagineinf}) can also be written as 
$$ \left(1:\frac{1}{t^q}:\frac{1}{t^{\nicefrac{(3q+1)}{4}}}+\cdots:\frac{1}{t^{\nicefrac{(q+1)}{2}}}\cdots:\frac{1}{t^{q+1}}+\cdots\right),$$ and hence it is equivalent to
$$\left(t^{q+1}+\cdots:t+\cdots:t^{\nicefrac{(q+3)}{4}}+\cdots:t^{\nicefrac{(q+1)}{2}}+\cdots:1\right). $$
%\begin{multline}
%    \Big(1:\frac{1}{t^q}:\frac{1}{t^{\nicefrac{(3q+1)}{4}}}+\cdots:\frac{1}{t^{\nicefrac{(q+1)}{2}}}\cdots:\frac{1}%{t^{q+1}}+\cdots\Big)= 
%(t^{q+1}+\cdots:t+\cdots:t^{\nicefrac{(q+3)}{4}}+\cdots:t^{\nicefrac{(q+1)}{2}}+\cdots:1).
%\end{multline}
\end{proof}
The automorphisms of $\cX$ have linear action in $PG(4,\mathbb{F})$; see \cite[Section 10.3]{HKT} and \cite{KT}. We determine this action for the automorphisms in Lemmas \ref{automorfismiquoziente} and \ref{automorfismiquoziente2}.
\begin{proposition}
\label{actionpsi} If $\Psi$ is the automorphism of $\cF$ defined in Lemma \ref{automorfismiquoziente}, then 
$$\Psi:\,\left(X_0:X_1:X_2:X_3:X_4\right)\mapsto \left(X_0: \theta X_1: \theta^{\nicefrac{(q+3)}{4}}X_2: \theta^{\nicefrac{(q+1)}{2}}X_3: \theta^{q+1}X_4\right).$$
\end{proposition}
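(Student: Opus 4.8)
The plan is to compute the action of $\Psi$ directly in the homogeneous coordinates of $PG(4,\mathbb{F})$ by pulling back through the embedding map $\rho$. Recall from Lemma \ref{automorfismiquoziente} that $\Psi:(\xi,\eta)\mapsto(\theta\xi,\zeta\eta)$ with $\zeta=\theta^{\nicefrac{(q+3)}{4}}$, and from the Riemann--Roch basis displayed before Theorem \ref{thene} that the homogeneous coordinate functions on $\cX$ are
\begin{equation*}
X_0=1,\quad X_1=\xi,\quad X_2=\eta,\quad X_3=\frac{\eta^2}{\xi},\quad X_4=\left(\frac{\eta^2}{\xi}\right)^2.
\end{equation*}
So the first step is simply to substitute $\xi\mapsto\theta\xi$ and $\eta\mapsto\zeta\eta$ into each of these five functions and read off the scalar factor that each coordinate acquires.

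Carrying out this substitution, $X_1=\xi$ is multiplied by $\theta$; $X_2=\eta$ is multiplied by $\zeta=\theta^{\nicefrac{(q+3)}{4}}$; the coordinate $X_3=\eta^2/\xi$ is multiplied by $\zeta^2/\theta=\theta^{\nicefrac{(q+3)}{2}}/\theta=\theta^{\nicefrac{(q+1)}{2}}$; and $X_4=(\eta^2/\xi)^2$ is multiplied by $(\zeta^2/\theta)^2=\theta^{q+1}$. The coordinate $X_0=1$ is of course fixed. This yields precisely the diagonal action
$$\left(X_0:X_1:X_2:X_3:X_4\right)\mapsto\left(X_0:\theta X_1:\theta^{\nicefrac{(q+3)}{4}}X_2:\theta^{\nicefrac{(q+1)}{2}}X_3:\theta^{q+1}X_4\right)$$
asserted in the statement. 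The only arithmetic to check is that the exponents combine correctly: for $X_3$ one needs $2\cdot\tfrac{q+3}{4}-1=\tfrac{q+1}{2}$, and for $X_4$ one needs $2\cdot\tfrac{q+1}{2}=q+1$, both of which are immediate.

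The conceptual point that must be justified, rather than the computation itself, is that this substitution genuinely describes the geometric action of $\Psi$ on $\cX$: because the coordinate functions $1,\xi,\eta,\eta^2/\xi,(\eta^2/\xi)^2$ span the Riemann--Roch space $\mathcal{L}((q+1)P_\infty)$ and $\Psi$ is an $\mathbb{F}_{q^2}$-automorphism of $\cF$, the induced action on $\cX$ is the linear map of $PG(4,\mathbb{F})$ recording how $\Psi$ permutes (here, rescales) these basis functions, in accordance with the linearity of automorphisms stated in Result \ref{res10102024}. Thus the main obstacle is not any difficulty but only the bookkeeping of exponents; the argument is a direct pullback computation, and I expect no genuine obstruction.
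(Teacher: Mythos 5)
Your proof is correct and follows essentially the same route as the paper: both substitute $\xi\mapsto\theta\xi$, $\eta\mapsto\theta^{\nicefrac{(q+3)}{4}}\eta$ into the five coordinate functions $1,\xi,\eta,\eta^2/\xi,(\eta^2/\xi)^2$ and read off the diagonal scalars, with the same exponent bookkeeping $\zeta^2/\theta=\theta^{\nicefrac{(q+1)}{2}}$ and $(\zeta^2/\theta)^2=\theta^{q+1}$. No issues.
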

\begin{proof} 
%The action of the subgroup generated by $\Psi$ defined as in Lemma \ref{automorfismiquoziente} is the following.
From $\Psi:\left(\xi,\eta\right)\mapsto\left(\theta \xi, \theta^{\nicefrac{(q+3)}{4}}\eta\right)$, 
%\begin{multline}
$$
\Psi\left(\xi\right)=\theta \xi;\,\Psi\left(\eta\right)=\theta^{\nicefrac{(q+3)}{4}}\eta;\,
\Psi\left(\frac{\eta^2}{\xi}\right)=\theta^{\nicefrac{(q+1)}{2}}\frac{\eta^2}{\xi};\,
\Psi\left(\left(\frac{\eta^2}{\xi}\right)^2\right)=\theta^{q+1}\frac{\eta^4}{\xi^2}.
$$
%\end{multline}
For a generic point $P=(1:a:b)$ of $\cF$, (\ref{coordinate}) implies  $\sigma(P)=\left(1,a,b,\frac{b^2}{a},(\frac{b^2}{a})^2\right)$. Therefore,   
$$\Psi(P)=\left(1:\theta a:\theta^{\nicefrac{(q+3)}{4}} b:\theta^{\nicefrac{(q+1)}{2}} \frac{b^2}{a}:\theta^{q+1}\frac{b^4}{a^2}\right) $$ 
whence the claim follows.
\end{proof}
\begin{proposition} %\label{actiontheta} 
If $\Theta$ is the automorphism of $\cF$ defined in Lemma \ref{automorfismiquoziente2}, then 
$$\Theta:\,\left(X_0:X_1:X_2:X_3:X_4\right)\mapsto \left(X_4: X_1:X_2:X_3: X_0\right).$$
\end{proposition}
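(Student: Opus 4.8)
The plan is to mirror the argument already used for Proposition~\ref{actionpsi}: I would compute how $\Theta$ acts on the five generators of the Riemann--Roch space $\mathcal{L}(\mathcal{D})=\langle 1,\xi,\eta,\eta^2/\xi,(\eta^2/\xi)^2\rangle$, and then read off the induced linear map on $PG(4,\mathbb{F})$ directly from the embedding $\rho$ of (\ref{coordinate}), whose coordinates are $X_0=1$, $X_1=\xi$, $X_2=\eta$, $X_3=\eta^2/\xi$, $X_4=(\eta^2/\xi)^2$.

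First I would apply $\Theta:(\xi,\eta)\mapsto(\xi^3/\eta^4,\,\xi^2/\eta^3)$ to each generator, obtaining
$$\Theta(\xi)=\frac{\xi^3}{\eta^4},\qquad \Theta(\eta)=\frac{\xi^2}{\eta^3},\qquad \Theta\!\left(\frac{\eta^2}{\xi}\right)=\frac{\Theta(\eta)^2}{\Theta(\xi)}=\frac{\xi}{\eta^2},\qquad \Theta\!\left(\frac{\eta^4}{\xi^2}\right)=\left(\frac{\xi}{\eta^2}\right)^2=\frac{\xi^2}{\eta^4}.$$
The key observation is that each image is again, up to a common scalar, one of the five coordinate functions, which is exactly what forces the action to be a coordinate permutation. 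Then, for a generic point $P=(1:a:b)$ of $\cF$ with $\rho(P)=(1:a:b:b^2/a:b^4/a^2)$, the image $\Theta(P)$ has embedded coordinates
$$\Theta(P)=\left(1:\frac{a^3}{b^4}:\frac{a^2}{b^3}:\frac{a}{b^2}:\frac{a^2}{b^4}\right).$$
Multiplying this representative through by the common factor $b^4/a^2$ yields $\left(b^4/a^2:a:b:b^2/a:1\right)=(X_4:X_1:X_2:X_3:X_0)$, which is precisely the asserted linear map.

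The computation is routine, so there is no genuine obstacle; the only point demanding a little care is the projective rescaling, namely verifying that the single scalar $b^4/a^2$ simultaneously clears all five denominators and returns the original generators in permuted order. As a consistency check, the resulting map interchanges $X_0$ and $X_4$ while fixing $X_1,X_2,X_3$, hence is an involution of $PG(4,\mathbb{F})$, in agreement with $\Theta$ having order two as established in Lemma~\ref{automorfismiquoziente2}.
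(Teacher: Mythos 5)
Your proposal is correct and follows essentially the same route as the paper: apply $\Theta$ to the five generators of $\mathcal{L}((q+1)P_\infty)$, evaluate at a generic point $P=(1:a:b)$, and rescale the resulting representative $\left(1:\frac{a^3}{b^4}:\frac{a^2}{b^3}:\frac{a}{b^2}:\frac{a^2}{b^4}\right)$ by $b^4/a^2$ to recognize the coordinate permutation $(X_4:X_1:X_2:X_3:X_0)$. The computation and the projective rescaling both match the paper's argument exactly.
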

\begin{proof}
Since $\Theta:\left(\xi,\eta\right)\mapsto\left(\frac{\xi^3}{\eta^4},\frac{\xi^2}{\eta^3}\right)$, 
$$
\Theta\left(\xi\right)=\frac{\xi^3}{\eta^4};\,
\Theta\left(\eta\right)=\frac{\xi^2}{\eta^3};\,
\Theta\left(\frac{\eta^2}{\xi}\right)=\frac{\xi}{\eta^2};\,
\Theta\left(\left(\frac{\eta^2}{\xi}\right)^2\right)=\frac{\xi^2}{\eta^4}.
$$
For a generic point $P=(1:a:b)$ of $\cF$, we have $\rho(P)=\left(1,a,b,\nicefrac{b^2}{a},(\nicefrac{b^2}{a})^2\right)$. Since 
$$\Theta(P)=\left(1:\frac{a^3}{b^4}:\frac{a^2}{b^3}:\frac{a}{b^2}:\frac{a^2}{b^4}\right)=\left(\left(\frac{b^2}{a}\right)^2:a:b:\frac{b^2}{a}:1\right)$$
the claim follows. 

%\begin{equation}
 %   \label{mappaaltra}
%(1,X_1,X_2,X_3,X_4)\mapsto \left(1,\frac{X_1^3}{X_2^4},\frac{X_1^2}{X_2^3},\frac{1}{X_3},\frac{1}{X_4}\right).
%\end{equation}
\end{proof}
%\textcolor{red}{Aggiungere all'intro cosa vuol dire $\circ$}
\begin{proposition}%\label{autgroup} 
The group $U$ generated by $\Psi$ and $\Theta$ has order $\ha(q^2-1)$. More precisely,
$$U=<\Psi,\Theta|\Theta \Psi\Theta=\Psi^{-q}>.$$
%$G=D\rtimes \Phi$ where $D$ is a cyclic group of order $\frac{1}{4}(q^2-1)$. Moreover, $D=D_1\circ D_2$ with $|D_1\cap D_2|=2$ where $|D_1|=q+1$, $|D_2|=\ha (q-1)$, and $D_1$ is the center of $G$. The Sylow $2$-subgroups of $G$ are dihedral.   
\end{proposition}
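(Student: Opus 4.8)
The plan is to use the explicit linear actions of $\Psi$ and $\Theta$ on $PG(4,\mathbb{F})$ furnished by the two preceding propositions and to identify $U$ as a semidirect product of cyclic groups. Set $N:=\frac14(q^2-1)$. By Lemma~\ref{automorfismiquoziente} the cyclic group $\langle\Psi\rangle$ has order $N$, equivalently $\theta^{\nicefrac{(q^2-1)}{4}}=1$, while $\Theta^2=1$ was already checked in Lemma~\ref{automorfismiquoziente2}. Thus the two ingredients are a cyclic group of order $N$ and an involution, and everything will hinge on how they interact.

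The crux is the conjugacy relation $\Theta\Psi\Theta=\Psi^{-q}$. On the level of matrices, $\Psi$ is the diagonal map $\diag(1,\theta,\theta^{\nicefrac{(q+3)}{4}},\theta^{\nicefrac{(q+1)}{2}},\theta^{q+1})$ and $\Theta$ is the coordinate swap interchanging $X_0$ and $X_4$ and fixing $X_1,X_2,X_3$; conjugating $\Psi$ by $\Theta$ therefore merely interchanges its first and last diagonal entries, producing $\diag(\theta^{q+1},\theta,\theta^{\nicefrac{(q+3)}{4}},\theta^{\nicefrac{(q+1)}{2}},1)$. Dividing by the scalar $\theta^{q+1}$ and comparing with $\Psi^{-q}=\diag(1,\theta^{-q},\theta^{-q\nicefrac{(q+3)}{4}},\theta^{-q\nicefrac{(q+1)}{2}},\theta^{-q(q+1)})$ reduces the identity to four congruences between exponents modulo $N$. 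A direct check shows that the four exponent differences equal $0$, $\frac14(q^2-1)$, $\frac12(q^2-1)$ and $q^2-1$, each a multiple of $N$, so the two projective maps coincide and the relation holds. I expect this exponent bookkeeping against $\theta^{N}=1$, juggling the three exponents $\nicefrac{(q+3)}{4}$, $\nicefrac{(q+1)}{2}$ and $q+1$, to be the only delicate point, although it is entirely routine.

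With the relation in hand, $\Theta$ normalizes $\langle\Psi\rangle$, so $\langle\Psi\rangle$ is normal in $U$ and $U=\langle\Psi\rangle\langle\Theta\rangle$; hence $|U|$ divides $2N=\frac12(q^2-1)$. To force equality I would show $\Theta\notin\langle\Psi\rangle$: every power of $\Psi$ is a diagonal projective transformation and so fixes the point $Q_0=(1:0:0:0:0)$, whereas $\Theta$ sends $Q_0$ to $Q_\infty=(0:0:0:0:1)\neq Q_0$ by its explicit action (see Proposition~\ref{propoints} for these points). Therefore the coset $\Theta\langle\Psi\rangle$ is nontrivial, $[U:\langle\Psi\rangle]=2$, and $|U|=\frac12(q^2-1)$.

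Finally, for the presentation it suffices to observe that the displayed relation, together with the orders $\Psi^{N}=1$ and $\Theta^2=1$ already recorded, defines this semidirect product and nothing larger. The assignment $\Psi\mapsto\Psi^{-q}$ is an automorphism of the cyclic group $C_N$ because $\gcd(q,N)=1$, and its square is $\Psi\mapsto\Psi^{q^2}=\Psi$ since $q^2=4N+1\equiv1\pmod N$, so it has order dividing $2$. Consequently the abstract group $\langle\Psi,\Theta\mid\Psi^{N}=\Theta^2=1,\ \Theta\Psi\Theta=\Psi^{-q}\rangle$ has order exactly $2N$, and the surjection onto $U$ sending the generators to $\Psi$ and $\Theta$ is an isomorphism by the order count. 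This exhibits $U\cong C_N\rtimes C_2$ of order $\ha(q^2-1)$ with the stated presentation.
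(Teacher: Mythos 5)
Your proof is correct and follows essentially the same route as the paper: both verify the relation $\Theta\Psi\Theta=\Psi^{-q}$ by conjugating the diagonal matrix of $\Psi$ by the coordinate swap $\Theta$ and reducing the resulting exponents modulo the order of $\theta$. You additionally spell out details the paper leaves implicit --- the exponent congruences modulo $\frac{1}{4}(q^2-1)$, the fact that $\Theta\notin\langle\Psi\rangle$ (so the index is genuinely $2$), and why the presentation defines a group of order exactly $\ha(q^2-1)$ --- all of which is sound.
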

\begin{proof} A straightforward computation shows that 
$$\Theta\Psi\Theta: \left(X_0:X_1:X_2:X_3:X_4\right)\mapsto\left(\theta^{q+1}X_0:\theta X_1:\theta^{(q+3)/4}X_2:\theta^{(q+1)/2}X_3:X_4\right).$$
This together with $\theta^{(q^2-1)/2}=1$ imply  $\Theta\Psi\Theta=\Psi^{-q}$. 
%Let $D_1=\{\Psi^{2i}|i=1,\ldots, \frac{1}{4}(q^2-1)\}$. Then $D_1$ is a subgroup of $D$; more precisely  $D_1$ is the center of $G$. In fact, $\Theta\Psi^{2i}\Theta=\Psi^{-2iq}=\Psi^{2i}$.   
%Since $\Psi$ generates a subgroup of order $\frac{1}{4}(q^2-1)$ while $\Theta$ is an involution, it remains to show that they generate a dihedral group. For this it is enough to verify that $\Theta\Psi\Theta=\Psi^{-1}$.  
\end{proof}
\begin{proposition}
    \label{pro26092024} Let $\mathcal{O}$ be the set of the points of $\cX$ arising from the branches of $\cF$ centered at singular points, that is, $\mathcal{O}=\mathcal{O}_1\cup \mathcal{O}_2$ where $\mathcal{O}_1=
\{Q_0,Q_\infty\}$ and $\mathcal{O}_2=\{Q_c|c^{\nicefrac{(q-1)}{2}}=-1\}$. Then both $\mathcal{O}_1$ and $\mathcal{O}_2$ are orbits of $\aut(\cX)$. 
\end{proposition}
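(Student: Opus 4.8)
The plan is to read off the $\aut(\cX)$-action on $\mathcal{O}$ from the explicit linear representations of the two generators of the inherited group $U=\langle\Psi,\Theta\rangle$. By Proposition~\ref{propoints} the points in question are $Q_0=(1:0:0:0:0)$, $Q_\infty=(0:0:0:0:1)$ and $Q_c=(c^2:0:0:c:1)$ with $c^{\nicefrac{(q-1)}{2}}=-1$; all of them satisfy $X_1=X_2=0$, so it suffices to track the three coordinates $(X_0:X_3:X_4)$ under $\Psi$ (Proposition~\ref{actionpsi}) and under the map $\Theta:(X_0:X_1:X_2:X_3:X_4)\mapsto(X_4:X_1:X_2:X_3:X_0)$ associated with the automorphism of Lemma~\ref{automorfismiquoziente2}.

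First I would dispose of $\mathcal{O}_1$. Substituting into the formula for $\Psi$ shows that $\Psi$ fixes both $Q_0$ and $Q_\infty$, while $\Theta$ interchanges them, since $\Theta(Q_0)=Q_\infty$ and $\Theta(Q_\infty)=Q_0$; hence the $U$-orbit of $Q_0$ is precisely $\{Q_0,Q_\infty\}=\mathcal{O}_1$. For $\mathcal{O}_2$, a direct substitution gives $\Psi(Q_c)=Q_{c'}$ with $c'=c\,\theta^{-\nicefrac{(q+1)}{2}}$, and the defining condition is preserved because $(c')^{\nicefrac{(q-1)}{2}}=c^{\nicefrac{(q-1)}{2}}\,\theta^{-\nicefrac{(q^2-1)}{4}}=-1$ by $\theta^{\nicefrac{(q^2-1)}{4}}=1$. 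Thus $\Psi$ permutes $\mathcal{O}_2$, acting on the parameter $c$ as multiplication by $\theta^{-\nicefrac{(q+1)}{2}}$. Since $\theta$ is a primitive $\nicefrac{(q^2-1)}{4}$-th root of unity, the element $\theta^{\nicefrac{(q+1)}{2}}$ has multiplicative order $\nicefrac{(q-1)}{2}$ and so generates the unique subgroup $\{x:x^{\nicefrac{(q-1)}{2}}=1\}$ of $\mathbb{F}_{q^2}^{*}$ of that order. As $\mathcal{O}_2$ is a coset of this subgroup with $|\mathcal{O}_2|=\nicefrac{(q-1)}{2}$, the cyclic group $\langle\Psi\rangle$ acts regularly, hence transitively, on $\mathcal{O}_2$, so $\mathcal{O}_2$ is a single $U$-orbit as well. (One may also record the consistency check $\Theta(Q_c)=Q_{1/c}$, showing $\Theta$ too preserves $\mathcal{O}_2$.)

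It remains to promote ``single $U$-orbit'' to ``orbit of $\aut(\cX)$'', which amounts to showing that $\aut(\cX)$ leaves $\mathcal{O}$, and its partition $\mathcal{O}_1\cup\mathcal{O}_2$, invariant; this is the step I expect to be the real obstacle. The most economical route is to invoke Theorem~\ref{mt16102024}(III): the $\mathbb{F}_{q^2}$-rational automorphism group of $\cX$ is the inherited group of order $\nicefrac{(q^2-1)}{2}$, hence coincides with $U$, and the orbits found above are already $\aut(\cX)$-orbits. To keep the proposition logically independent of the full automorphism computation one argues intrinsically: by Result~\ref{res10102024} every automorphism of $\cX$ is induced by a projectivity of $PG(4,\mathbb{F}_{q^2})$ preserving $\cH_{4,q}$, and since automorphisms preserve Weierstrass semigroups, the partition is respected once one observes that the points of $\mathcal{O}_2$ carry the non-gap $\nicefrac{(q-3)}{2}$ of Proposition~\ref{pro26092024B}, whereas at the rational points of $\mathcal{O}_1$, whose non-gaps below $q+1$ are those of Theorem~\ref{lemWS1}, the value $\nicefrac{(q-3)}{2}$ is a gap. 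The delicate input still needed for a self-contained argument is that no point of $\cX$ outside $\mathcal{O}$ shares these semigroups, so that $\mathcal{O}_1$ and $\mathcal{O}_2$ are exactly the corresponding level sets and therefore $\aut(\cX)$-invariant.
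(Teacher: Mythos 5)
Your core computation is exactly the paper's proof: the paper likewise reads off from Proposition~\ref{actionpsi} that $\Psi$ fixes $Q_0$ and $Q_\infty$ and sends $Q_c$ to $Q_d$ with $d=c/\theta^{\nicefrac{(q+1)}{2}}$, notes that $|\mathcal{O}_2|$ equals the order of $\theta^{\nicefrac{(q+1)}{2}}$ so that $\langle\Psi\rangle$ is transitive on $\mathcal{O}_2$, and records that $\Theta$ swaps $Q_0$ with $Q_\infty$ and sends $Q_c$ to $Q_{c^{-1}}$. Where you go beyond the paper is in your final paragraph: the paper's proof silently stops at the $U$-orbit computation and never addresses why these are orbits of the full $\aut(\cX)$, whereas you correctly identify that gap and close it by appealing to Theorem~\ref{teo10102024} (whose proof does not depend on this proposition, so there is no circularity), with the Weierstrass-semigroup comparison as a partial intrinsic substitute whose remaining weakness (points outside $\mathcal{O}$) you honestly flag.
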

\begin{proof} Proposition \ref{actionpsi} shows that $\Psi$ fixes both $Q_0$ and $Q_\infty$. Let $d=c/\theta^{\nicefrac{(q+1)}{2}}$. Then $d^{\nicefrac{(q-1)}{2}}=-1$, and Proposition \ref{actionpsi} shows that $\Psi$ takes $Q_c$ to $Q_d$. 
Since there are as many points in $\mathcal{O}_2$ as the order of $\theta^{\nicefrac{(q+1)}{2}}$ in $\mathbb{F}_{q^2}^*$, the group generated by $\Psi$ acts transitively on $\mathcal{O}_2$. Moreover, $\Theta$ 
interchanges $Q_0$ and $Q_\infty$, and takes $Q_c$ to $Q_{c^{-1}}$.  
\end{proof}

%\textcolor{red}{\begin{equation}
%    \label{mappa2}
%(1,X_1,X_2,X_3,X_4)\mapsto \left(1,\frac{X_1^3}{X_2^4},\frac{X_1^2}%{X_2^3},\frac{X_1}{X_2^2},\frac{X_1^2}{X_2^4}\right).
%\end{equation}}
%\textcolor{red}{\begin{equation}
%    \label{mappa3}
%(1,X_1,X_2,X_3,X_4)\mapsto \left(1,\frac{X_1}{X_4},\frac{X_1}%{X_2X_3},\frac{1}{X_3},\frac{1}{X_4}\right).
%\end{equation}}
%Therefore, in homogeneous coordinates is
%\begin{equation}
%(X_0:X_1:X_2:X_3:X_4)\mapsto \left(X_2^4:X_0 X_1^3:X_0 X_1^2 X_2: X_0 %X_1 X_2^2: X_0 ^2X_1^2\right).
%\end{equation}
%\textcolor{red}{\begin{equation}
%(X_0:X_1:X_2:X_3:X_4)\mapsto \left(X_2X_3X_4:X_1X_2X_3:X_0 %X_1X_4:X_0X_2X_4:X_0X_2X_3\right).
%\end{equation}}

%Let $D=(d_{ij})$ be the matrix associated to the Hermitian variety $\mathcal{H}_{4,q}$ which contains the curve $\mathcal{X}$, i.e.
%\begin{equation}
%    \mathcal{H}_{4,q}: \sum_{i,j=0}^{4} d_{ij}X_i^q X_j=0.
%\end{equation}
%It can be shown that 
%\begin{equation}\label{varietaherm}   
%D=
%\begin{pmatrix}
%0 & 0 & 0 & 0 & 1\\
%0 & -1 & 0 & 0 & 0\\
%0 & 0 & 4 & 0 & 0\\
%0 & 0 & 0 & -2 & 0\\
%1 & 0 & 0 & 0 & 0
%\end{pmatrix}
%\end{equation}
%and
%\begin{equation}
%    \mathcal{H}_{4,q}: -X_1^{q+1}+4X_2^{q+1}-2X_3^{q+1}+X_0X_4^q+X_0^qX_4=0.
%\end{equation}
\end{section}

%\section{Spazio di Frobenius}
%Let $\mathcal{C}$  be the curve of equation (\ref{quozientebella}). Since $\mathcal{C}$ has degree $q$, its genus does not exceed $\frac{1}{2}(q-1)(q-2)$. Since $\mathfrak{g}=\frac{1}{8}(q-1)^2$ the holds, in fact
%$$\frac{1}{8}(q-1)^2=\frac{1}{8}(q^2-2q+1)\leq \frac{1}{2} (q^2-3q+2)$$
%that holds for every $q>3$.
%\textcolor{mygr}{
%\begin{theorem}
%If, for $q>3$, $\mathcal{X}$ is an $\mathbb{F}_{q^2}$-maximal curve of genus $\mathfrak{g}=\frac{1}{8}(q-1)^2$ with $dim(D)=4$, then $\mathcal{X}$ is birationally equivalent over $\mathbb{F}_{q^2}$ to the curve $\mathcal{C}$ in (\ref{quozientebella}).
%\end{theorem}
%\begin{proof}
%\end{proof}}

\section{The $\mathbb{F}_{q^2}$-rational automorphism group}    
 %Proposition \ref{propoints}, $\cX$ has five $\mathbb{F}_{q^2}$-rational points that span 
%$PG(4,\mathbb{F})$, for instance $Q_\infty,Q_0,Q_1,Q_{-1},Q_c$ with $c^{\nicefrac{(q-1)}{2}}=-1$ and $c\ne 1,-1$. Let $\alpha\in \aut(\cX)$. From Result \ref{res10102024}, $\alpha$ takes each of those five points to $\mathbb{F}_{q^2}$-rational points. Therefore, $\alpha$ has a matrix representation with all entries in $\mathbb{F}_{q^2}$. Thus $\alpha\in \aut_{\mathbb{F}_{q^2}}(\cX)$ whence $\aut(\cX)=\aut_{\mathbb{F}_{q^2}}(\cX)$.   
A Magma aided computation allows to determine $\aut({\mathbb{F}_{q^2}}(\cX))$ for the first three values of $q$, namely $q=5,9,13$. For $q=5$, $\aut({\mathbb{F}_{25}}(\cX))$ has order $240$, moreover $\cX$ is $\mathbb{F}_{625}$-isomorphic to the Roquette curve; see \cite[Theorem 11.127 (II)]{HKT}.  For $q=9,13$ instead, $\aut({\mathbb{F}_{q^2}}(\cX))$ has order $\ha(q^2-1)$ and hence it is the inherited automorphism group. We show that the latter claim holds true for $q\ge 17$. 
\begin{theorem}
\label{teo10102024} The $\mathbb{F}_{q^2}$-rational automorphism group $\aut(\mathbb{F}_{q^2}(\cX))$ of the curve $\cX$ %of equation (\ref{quozientebella}) 
has order $\ha(q^2-1)$ and hence it is inherited from the automorphism group of the Hermitian curve. 
\end{theorem}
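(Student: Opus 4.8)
The plan is to combine the structural constraints coming from the Frobenius embedding with the known inherited subgroup $U$ of order $\ha(q^2-1)$ from Proposition preceding this theorem. By Result~\ref{res10102024}, $\aut(\mathbb{F}_{q^2}(\cX))$ embeds in $\PGU(5,q)$ as the subgroup preserving the curve $\cX$ lying on the non-singular Hermitian variety $\cH_{4,q}$; so every $\mathbb{F}_{q^2}$-automorphism is a linear collineation of $PG(4,\mathbb{F})$. The strategy is to show that the full group $G:=\aut(\mathbb{F}_{q^2}(\cX))$ cannot be strictly larger than $U$, i.e. $|G|=\ha(q^2-1)$, for $q\ge 17$. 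Since $U\le G$, it suffices to prove $|G|\le\ha(q^2-1)$.

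First I would exploit the distinguished set $\mathcal O=\mathcal O_1\cup\mathcal O_2$ of points arising from the singular branches, which by Proposition~\ref{pro26092024} consists of two $\aut(\cX)$-orbits: the pair $\mathcal O_1=\{Q_0,Q_\infty\}$ and the set $\mathcal O_2$ of the $\ha(q-1)$ points $Q_c$. The key point is that $\mathcal O$ must be invariant under all of $G$, because these points are intrinsically characterized (they are the images of the branches centered at the singular points of $\cF$, equivalently the $\Sigma$-Weierstrass points of special type, or they can be recognized via the two quadrics $\mathcal Q_1,\mathcal Q_2$ and the Hermitian variety as in Remark~\ref{rem16102024A}). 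Thus $G$ permutes $\mathcal O_1$ and $\mathcal O_2$, giving a homomorphism $G\to \Sym(\mathcal O_1)\times\Sym(\mathcal O_2)$ whose restriction I would analyze. Next I would bound the stabilizer in $G$ of one of the points, say $Q_\infty$: by Result~\ref{resth11.49b} the stabilizer $G_{Q_\infty}$ has the form $S\rtimes C$ with $S$ a $p$-group and $C$ cyclic. Using Result~\ref{resth11.129} together with Result~\ref{zeroprank} (zero Hasse--Witt invariant), any non-trivial $p$-element fixes a \emph{unique} point; since the Weierstrass non-gap data at $P_\infty$ computed in Theorem~\ref{lemWS1} are coprime to $p$ (the non-gaps $\ha(q+1),\,q-\tfrac14(q-1),\,q$ are not divisible by $p$), I expect $S$ to be trivial, so $G_{Q_\infty}=C$ is cyclic.

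The heart of the argument is then to bound $|C|=|G_{Q_\infty}|$. Here I would use the action of $C$ on the Wronskian/order data and the Weierstrass semigroup at $Q_\infty$: a generator of $C$ acts on the local uniformizer $t$ and hence on each non-gap eigenfunction by a root of unity, and the compatibility of these eigenvalues (dictated by the non-gaps $\ha(q+1),\,q-\tfrac14(q-1),\,q$ and by the linear action on the five coordinates $X_0,\dots,X_4$ as in Proposition~\ref{actionpsi}) forces $|C|$ to divide $q^2-1$ with a definite quotient. Comparing with the inherited cyclic subgroup $\langle\Psi\rangle\le U$ of order $\tfrac14(q^2-1)$ fixing $Q_\infty$, I would conclude $G_{Q_\infty}=\langle\Psi\rangle$, so $|G_{Q_\infty}|=\tfrac14(q^2-1)$. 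Finally, the orbit--stabilizer theorem applied to $\mathcal O_1$ (on which $G$ acts with image of order at most $2$, realized by $\Theta$) gives
\begin{equation*}
|G|=|G_{Q_\infty}|\cdot|Q_\infty^{\,G}|\le \tfrac14(q^2-1)\cdot 2=\ha(q^2-1),
\end{equation*}
which matches $|U|$ and forces $G=U$.

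The main obstacle I anticipate is ruling out \emph{extra} automorphisms that do not preserve $\mathcal O$ or that enlarge the point-stabilizer beyond $\langle\Psi\rangle$ — in other words, proving the intrinsic characterization of $\mathcal O$ and the sharp bound $|G_{Q_\infty}|=\tfrac14(q^2-1)$. The exceptional small cases ($q=5$, where $\cX$ becomes the Roquette curve with a much larger group) show this is genuinely delicate, so the numerical hypothesis $q\ge 17$ must be used precisely where the generic Weierstrass-point configuration and the order-sequence $(0,1,2,3,q)$ of Theorem~\ref{mt16102024}(I) pin down the geometry; establishing that no additional linear collineation of $PG(4,\mathbb{F})$ preserving $\cH_{4,q}$, $\mathcal Q_1$, $\mathcal Q_2$ and $\cX$ exists beyond those in $U$ is where the real work lies.
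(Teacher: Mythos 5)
Your outline has the right skeleton — linearity via Result \ref{res10102024}, an orbit of length $2$ on $\{Q_0,Q_\infty\}$, a stabilizer of order $\tfrac14(q^2-1)$, and exclusion of $p$-elements — but the two steps you yourself flag as ``where the real work lies'' are exactly the steps that constitute the proof, and you do not supply them. First, you never establish that the full group $G=\aut(\mathbb{F}_{q^2}(\cX))$ preserves $\mathcal O_1=\{Q_0,Q_\infty\}$: Proposition \ref{pro26092024} only exhibits transitivity of the \emph{inherited} group $U$ on $\mathcal O_1$ and $\mathcal O_2$, not invariance under a hypothetical larger group, and ``intrinsically characterized'' is an assertion, not an argument. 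The paper closes this gap geometrically: it shows the quartic surface $\cQ=\cQ_1\cap\cQ_2$ is irreducible (via a generic hyperplane section being an irreducible conic), that any $\alpha\in G$ must preserve $\cQ$ because otherwise $\alpha(\cQ)\cap\cQ$ would have degree at most $16<q+1=\deg\cX$ by B\'ezout, and then that $G$ permutes the three singular points of $\cQ$, of which only $(1:0:0:0:0)$ and $(0:0:0:0:1)$ lie on $\cX$. Second, your plan to pin down $|G_{Q_\infty}|=\tfrac14(q^2-1)$ by an eigenvalue analysis on the semigroup is left entirely open; the paper instead invokes the bound $|G_{Q_\infty}|\le 4\mathfrak{g}+2$ for a prime-to-$p$ stabilizer (\cite[Theorem 11.60]{HKT}) and the identity $|G|=\tfrac12(q^2-1)d$, from which $d=1$ follows by elementary arithmetic once the orbit of length $2$ is in hand.

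There is also a concrete factual error in your exclusion of the $p$-part of the stabilizer: you claim the non-gaps $\ha(q+1)$, $q-\tfrac14(q-1)$, $q$ at $P_\infty$ are coprime to $p$, but $q=p^h$ is itself one of these non-gaps, so that argument collapses. The correct and much simpler route (the one the paper takes) is that a $p$-element, having odd order $p\ne 2$, cannot interchange the two points $(1:0:0:0:0)$ and $(0:0:0:0:1)$ and so must fix both, contradicting Results \ref{resth11.129} and \ref{zeroprank}, which force every nontrivial $p$-element of an automorphism group of an $\mathbb{F}_{q^2}$-maximal curve to have a unique fixed point.
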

\begin{proof}
We may assume $q\ge 17$. From Lemma \ref{automorfismiquoziente} and Lemma \ref{automorfismiquoziente2}, the Lagrange theorem shows that there exists a non-negative integer $d$ such that 
$$|\aut({\mathbb{F}_{q^2}}(\cX))|=\ha(q^2-1)d.$$ 
From Result \ref{res10102024}, $\aut({\mathbb{F}_{q^2}}(\cX))$ is linear, that is, $\aut({\mathbb{F}_{q^2}}(\cX))$ is the subgroup of $PGL\left(5,\mathbb{F}_{q^2}\right)$ which leaves $\cX$ invariant. From the proof of Theorem \ref{thene}, $\mathcal{X}$ is contained in the intersection of two quadrics $\cQ_1,\,\cQ_2$ of equations (\ref{quadric1}) and (\ref{quadric2}), respectively.   

Since both $\cQ_1$ and $\cQ_2$ have degree 2, B\'ezout's theorem yields that their intersection $\cQ=\cQ_1\cap \cQ_2$ is a surface of degree 4. We show that $\cQ$ is irreducible. Otherwise, $\cQ$ splits into two components, say $\cQ=V_1\cup V_2$. Let $\pi$ be a generic hyperplane of $PG\left(4,\mathbb{F}\right)$ of equation $$\pi: \alpha_0+\alpha_1X_1+\alpha_2X_2+\alpha_3X_3+\alpha_4X_4=0.$$ Then $\pi\cap \cQ=\left(\pi\cap V_1\right) \cup \left(\pi\cap V_2\right)$ and $\pi\cap \cQ$ is a reducible variety for any choice of $\pi$, since $\pi\cap V_1$ and $\pi\cap V_2$ are varieties. Put $X_2=u$ and $X_3=v$. Then 
$$\left(1:\frac{u^2}{v}:u:v:v^2\right)$$ is the set of points in $\cQ$, and its intersection $\pi\cap Q$ with $\pi$ consists of all points of $\cQ$ satisfying equation  
$$\alpha_0 v+\alpha_1 u^2 +\alpha_2 uv+\alpha_3 v^2+\alpha_4 v^3=0.$$ Suppose that $\alpha_4=0$. In this case $\pi\cap \cQ$ can be viewed as a conic in the affine plane $AG(2,\mathbb{F})$ with coordinates $(u,v)$. Its associated matrix is
\begin{equation} \label{matriceassociata}
    \begin{pmatrix}
\alpha_1 & \ha\alpha_2 & 0 \\
\ha\alpha_2 & \alpha_3 & \ha\alpha_0\\
0 & \ha\alpha_0 & 0
    \end{pmatrix}.
\end{equation}
For a general choice of the coefficients of $\pi$, the matrix (\ref{matriceassociata}) has non-zero determinant, and hence the conic $\pi\cap \cQ$ is irreducible. Therefore, $\cQ$ itself is irreducible. 

Let $\alpha\in G=\aut({\mathbb{F}_{q^2}}(\mathcal{X}))$. We show that $\alpha$ is also an automorphism of $Q$. From $\alpha(\mathcal{X})=\mathcal{X}$, it follows that $\mathcal{X}\subseteq \alpha\left(\cQ\right)\cap \cQ$. Suppose that $\alpha(\cQ)\ne \cQ$. Then $\alpha(\cQ)\cap \cQ$ is a variety of dimension $1$ and degree at most $16$, since $\cQ$ and $\alpha(\cQ)$ has degree $4$. This is a contradiction, as $\mathcal{X}$ has degree $q+1>16$. Thus we have proved that $\alpha(\cQ)=\cQ$.

By direct computation it is shown that the singular points of $\cQ$ are $(1:0:0:0:0), (0:1:0:0:0)$, and $(0:0:0:0:1)$. 
Since $(0:1:0:0:0)\not\in\mathcal{X}$ and $G$ is linear, two possibilities may occur, namely either $G$ fixes both $(1:0:0:0:0)$ and $(0:0:0:0:1)$, or interchanges them. 

Suppose that $G$ has an element $\alpha$ of order $p$. Then $\alpha$ fixes both $(1:0:0:0:0)$ and $(0:0:0:0:1)$, since $p\ne 2$. By Results \ref{resth11.129} and \ref{zeroprank}, 
%As $\mathcal{X}$ is an $\mathbb{F}_{q^2}$-maximal curve, it has $p$-rank equal to $0$. Hence, 
$\alpha$ fixes exactly one point, %see \cite[Theorem 11.129]{HKT}, 
which gives a contradiction. Hence the order of $G$ is prime to $p$.

If $G$ fixes both $(1:0:0:0:0)$ and $(0:0:0:0:1)$,  \cite[Theorem 11.60]{HKT} yields $\ha(q^2-1)d\leq 4\mathfrak{g}+2,$ 
whence $q^2(d-1)+2q-5-d\leq 0$ which is impossible for $d>0$.
Hence $G$ has an orbit of length $2$. From the orbit-stabilizer theorem 
$|G|=2|G_O|$ with $O=(0:0:0:0:1)$.
Thus $|G_O|=\frac{1}{4}(q^2-1)d.$ 
By \cite[Theorem 11.60]{HKT},  $\frac{1}{4}(q^2-1)d\leq 4\mathfrak{g}+2$ whence $q^2(d-2)+4q-d-10\leq 0$. Since $q\ge 5$ this yields $d=1$, and hence 
$|G|=\ha(q^2-1)$. In other words, $G=\aut({\mathbb{F}_{q^2}}(\cX))$ is the inherited automorphism group.
\end{proof}
\begin{rem}
\label{rem16102024}\em{From the proof of Theorem \ref{teo10102024}, $Q=Q_1\cap Q_2$ is irreducible. However, $Q$ is not a Del Pezzo surface as $Q$ has singular points.}
\end{rem}
\section{On the geometry of the natural embedding} 
From Theorem \ref{lemWS1} and Result \ref{res25092024},  the order sequence of $\cX$ at $Q_\infty$ is $\left(0,1,\frac{1}{4}(q+3),\ha(q+1),q+1\right)$. By Proposition \ref{pro26092024}, this holds true for $Q_O$.

Next we determine the order sequence of $\cX$ at $Q_c$ with $c\neq 0$. From (\ref{ramoorgineY}), 
the first three rows in the Wronskian matrix are
\begin{equation*}%\label{matrixwroA} 
W(Q_c)=
\begin{pmatrix}
 1& 0 & 0 & \ast & \ast\\
 0& 0 & 1 & \ast & \ast\\
 0 & 2c & 0 & \ast & \ast\\
 %0 & 0 & y^{(3)}(P) & z^{(3)}(P) & u^{(3)}(P)\\
 %0 & 0 & y^{(4)}(P) & z^{(4)}(P) & u^{(4)}(P)
\end{pmatrix}
\end{equation*}
which shows that $W\left(Q_c\right)$ has rank at least three. Therefore, the first three orders are $(0,1,2)$. From Proposition \ref{pro26092024B}, $q+1-\ha(q-3)=\ha(q+5)$ is also an order. This together with Result \ref{res25092024} show that the order sequence of $\cX$ at $Q_c$ is $\left(0,1,2,\ha(q+5),q+1\right)$.

We go on by computing the Wronskian determinant of $\cX$ at other points. For simplicity, we carry out the computation for $p\ge 5$ as  $\cX$ is a curve embedded in a four dimensional projective space, and hence there is no need to use the Hasse derivates. Take a point $P\in \cX$  which is not the image of a branch of $\cF$ centered at the origin or at $P_\infty$. Then $P$ is the image of a unique branch of $\cF$. By a slight abuse of notation, we denote the center of that branch of $\cF$ by the same letter $P$.  
Therefore, $P=(a,b)\in\mathcal{C}$, $P\neq(0,0), P\neq P_\infty$.
%and $P$ $\mathbb{F}_{q^2}$-rational.
A primitive branch representation of the unique branch of $\cF$ centered at $P$
is $(x(t),y(t))=\left(a+t,b+mt+b_2t^2+b_3t^3+\ldots\right)$. Therefore, 
$$x(0)=a,\,x^{(1)}(0)=1,\,x^{(i)}(0)=0,\quad i\ge 2.$$ 
%where $x^{(i)}(0)=d^i(x(t)/(dt)^i$. 
Furthermore, 
$$ y(0)=b,\, y^{(1)}(0)=m,\, y^{(2)}(0)=2b_2,\, y^{(3)}(0)=6b_3. $$
Let  $$z(t)=\frac{y^2(t)}{x(t)}.$$ Then 
$$
z(0)=\frac{b^2}{a},\,\, z^{(1)}(0)=\frac{2amb-b^2}{a^2},\,\, z^{(2)}(0)=\frac{2(a^2m^2+2ab(ab_2-m)+b^2)}{a^3},
$$
$$
z^{(3)}(0)=\frac{(6b-4am+2b_2a^2)(2am-b)+(2ma^2-4ab)(4ab_2+m)+a^2b(12ab_3+6b_2)}{a^4}.
$$
Let 
$$u(t)=\left(\frac{y^2(t)}{x(t)}\right)^2.$$
Then
$$
    u(0)=\frac{b^4}{a^2}\,\,,
    u^{(1)}(0)=-\frac{2b^3(b-2am)}{a^3}\,\,,
    u^{(2)}(0)=\frac{2b^2(6a^2m^2+4ab(ab_2-2m)+3b^2)}{a^4}\,,$$
    $$
    u^{(3)}(0)= -\frac{24b^4}{a^5}+\frac{72b^3m}{a^4}-\frac{48b_2b^3+72b^2m^2}{a^3}+\frac{24b_3b^3+72b_2b^2m+24bm^3}{a^2}\,,$$
    
    $$u^{(4)}(0)=\frac{120b^4}{a^6}-\frac{384b^3m}{a^5}+\frac{288b_2b^3+432b^2m^2}{a^4}- \frac{192b_3b^3+576b_2b^2m+192bm^3}{a^3}+\frac{288b_3b^2m+144b_2^2b^2+288b_2bm^2+24m^4}{a^2}. $$
From the derivation of 
\begin{equation}
\label{eq24092024}
    y(t)^q+y(t)(a+t)^{\nicefrac{(q-1)}{2}}-(a+t)^{q-\nicefrac{(q-1)}{4}}=0
\end{equation}
yields
\begin{equation}
    \label{derivataprima}
    y^{(1)}(t)(a+t)^{\nicefrac{(q-1)}{2}}-\frac{1}{2}y(t)(a+t)^{\nicefrac{(q-3)}{2}}-\frac{1}{4}(a+t)^{\nicefrac{(3q-3)}{4}},
\end{equation}
\begin{equation}
    \label{derivataseconda}
    y^{(2)}(t)(a+t)^{\nicefrac{(q-1)}{2}}-y^{(1)}(t)(a+t)^{\nicefrac{(q-3)}{2}}+\frac{3}{4}y(t)(a+t)^{\nicefrac{(q-5)}{2}}+\frac{3}{16}(a+t)^{\nicefrac{(3q-7)}{4}}=0,
\end{equation}
\begin{equation}
    \label{derivataterza}
    y^{(3)}(t)(a+t)^{\nicefrac{(q-1)}{2}}-\frac{3}{2}y^{(2)}(t)(a+t)^{\nicefrac{(q-3)}{2}}+\frac{9}{4}y^{(1)}(t)(a+t)^{\nicefrac{(q-5)}{2}}-\frac{15}{8}y(t)(a+t)^{\nicefrac{(q-7)}{2}}-\frac{21}{64}(a+t)^{\nicefrac{(3q-11)}{4}}=0.
\end{equation}
For $t=0$ in (\ref{eq24092024}), (\ref{derivataprima}), (\ref{derivataseconda}), (\ref{derivataterza}) we have
\begin{equation}
    \label{sist1}
    \begin{cases}
         b^q+ba^{\nicefrac{(q-1)}{2}}-a^{q-\nicefrac{(q-1)}{4}}=0  \\
            ma^{\nicefrac{(q-1)}{2}}-\frac{1}{2}ba^{\nicefrac{(q-3)}{2}}-\frac{1}{4}a^{\nicefrac{(3q-3)}{4}}=0\\
             2b_2a^{\nicefrac{(q-1)}{2}}-ma^{\nicefrac{(q-3)}{2}}+\frac{3}{4}ba^{\nicefrac{(q-5)}{2}}+\frac{3}{16}a^{\nicefrac{(3q-7)}{4}}=0 \\
             6b_3 a^{\nicefrac{(q-1)}{2}}-3b_2 a^{\nicefrac{(q-3)}{2}}+\frac{9}{4}ma^{\nicefrac{(q-5)}{2}}-\frac{15}{8}ba^{\nicefrac{(q-7)}{2}}-\frac{21}{64}a^{\nicefrac{(3q-11)}{4}}=0.
    \end{cases}
    \end{equation}

From the second equation of (\ref{sist1}), 
\begin{equation}
\label{mespl}
m=\frac{1}{2}\frac{b}{a}+\frac{1}{4}a^{\nicefrac{(q-1)}{4}}.
\end{equation}
Substituting  $m$ from (\ref{mespl}) in the third equation of (\ref{sist1}) gives
\begin{equation}
    \label{b2espl}
    b_2=-\frac{1}{8}\frac{b}{a^2}+\frac{1}{32}a^{\nicefrac{(q-5)}{4}}.
\end{equation}
Thus (\ref{b2espl}) can also be written as
\begin{equation}
    \label{b2espll}
    \frac{1}{8}a(m)-\frac{3}{16}\frac{b}{a^2}.
\end{equation}
Substitution of (\ref{mespl}) and (\ref{b2espll}) in the fourth equation of (\ref{sist1}) gives
\begin{equation}
    \label{b3espl}
    b_3=\frac{1}{16}\frac{b}{a^3}-\frac{3}{128}a^{\nicefrac{(q-9)}{4}}.
\end{equation}
For $b_2=0$, (\ref{b2espl}) yields  
\begin{equation}
    \label{besplicit}
    b=\frac{1}{4}a^{\nicefrac{(q+3)}{4}}.
\end{equation}
%$$\frac{1}{4}a^{\frac{q^2+3q}{4}}+\frac{1}{4}a^{\frac{q+3}{4}}a^{\frac{q-%1}{2}}-a^{q-\frac{q-1}{4}}=0,\quad
   % a^{\frac{q^2+3q}{4}}-3a^{\frac{3q+1}{4}}=0,\quad
   % a^{\frac{q^2-1}{4}}=3.$$
Substitution of (\ref{besplicit}) in the first equation of (\ref{sist1}) gives $a^{\nicefrac{(q^2-1)}{4}}=3$. Equation $x^{\nicefrac{(q^2-1)}{4}}=81$ has a finite number of solutions, and we assume $a$ not to be any of them.

%On the other hand $a^{q^2-1}=1$ by $a\in \mathbb{F}_{q^2}$
%Since $4\in\mathbb{F}_q$, follows 
%\begin{equation}
 %   a^{q^2-1}=81.
%\end{equation} 
%Since $a\in\mathbb{F}_{q^2}, a^{q^2-1}=1$. Hence $b_2=0$ if and only if $p\mid 80$. By the assumption $q\equiv 1 \pmod{4}$, $b_2=0$ if and only if $p=5$. We can assume that $b_2\neq 0$.

For $b_3=0$, (\ref{b3espl}) yields 
\begin{equation}\label{besplici}
    b=\frac{3}{8}a^{\nicefrac{(q+3)}{4}}.
\end{equation} 
Substitution of (\ref{besplici}) in the first equation of (\ref{sist1}) gives $a^{\nicefrac{(q^2-1)}{4}}=\frac{5}{3}$. Equation $x^{\nicefrac{(q^2-1)}{4}}=\frac{5}{3}$ has a finite number of solutions, and we assume $a$ not to be any of them.  
%\begin{multline}
  %  \frac{3^q}{8^q}a^{\frac{q^2+3q}{4}}+\frac{3}{8}a^{\frac{q+3}{4}}a^{\frac{q-1}{2}}-a^{q-\frac{q-1}{4}}=0 \\
   % \frac{3^q}{8^q}a^{\frac{q^2+3q}{4}}-\frac{5}{8}a^{\frac{3q+1}{4}}=0 %\\
   % a^{\frac{q^2-1}{4}}=5\frac{8^{q-1}}{3^q}.
%\end{multline}

%\begin{equation}
 %   a^{q^2-1}=\frac{625}{81}.
%\end{equation} 
%Since $a\in\mathbb{F}_{q^2}, a^{q^2-1}=1$. Hence $b_3=0$ if and only if $p\mid 544$.
%By the assumption $q\equiv 1 \pmod{4}$, $b_3=0$ if and only if $p=17$. We can assume that $b_3\neq 0$.

Therefore, apart from the above finitely many discarded values of $a$, 
the Wronskian matrix of $\cX$ at the point $P$ is 
$$W=\begin{pmatrix}    1 & a & b & \frac{b^2}{a} & \frac{b^4}{a^2} \\
    0 & 1 & m & \ldots & \ldots \\
    0 & 0 & 2b_2 & \ldots & \ldots \\
    0 & 0 & 6b_3 & \ldots & \ldots \\
    0 & 0 & \ldots & \ldots & \ldots 
\end{pmatrix}.$$
Since 
$$
\begin{vmatrix}
    1& a\\
    0& 1
\end{vmatrix}=1,\qquad 
\begin{vmatrix}
    1& a & b\\
    0& 1 & m\\
    0 & 0 & 2b_2
\end{vmatrix}=2b_2\neq 0,$$
$W$ has rank at least $3$. We show that $W$ has rank $4$. For this purpose, 
compute the determinant whose rows and columns are the first four. This determinant $D$ is equal to $2b_2z^{(3)}(0)-6b_3z^{(2)}(0)$, and hence 
\begin{equation*} %\label{determinate4}
D=
\begin{vmatrix}
 1& a & b & \frac{b^2}{a}\\
    0& 1 & m & \frac{(2amb-b)}{a^2}\\
    0 & 0 & 2b_2 & \frac{2(a^2m^2+2ab(ab_2-m)+b^2)}{a^3}\\
    0 & 0 & 6b_3 & \frac{(6b-4am+2b_2a^2)(2am-b)+(2ma^2-4ab)(4ab_2+m)+a^2b(12ab_3+6b_2)}{a^4}
\end{vmatrix}
\end{equation*}
We verify that $D\ne 0$. 

From (\ref{mespl}), (\ref{b2espl}) and (\ref{b3espl}) the derivative and the higher derivatives of $z$ evaluated at $P$, i.e. in $t=0$, are 
\begin{equation*}%\label{zprimab}
   z^{(1)}(0)=\frac{1}{2}ba^{\nicefrac{(q-5)}{4}}
\end{equation*}
\begin{equation}
   \label{zsecab} 
    z^{(2)}(0)=-\frac{3}{8}ba^{\nicefrac{(q-9)}{4}}+\frac{1}{8}a^{\nicefrac{(q-3)}{2}}
   \end{equation}
\begin{equation}
    \label{zterab}
    z^{(3)}(0)=-\frac{9}{32}a^{\nicefrac{(q-5)}{2}}+\frac{21}{32}ba^{\nicefrac{(q-13)}{4}}.
\end{equation}
Similarly, from (\ref{mespl}), (\ref{b2espl}) and (\ref{b3espl}), one can compute  the derivative and higher derivatives of $u$ evaluated at $P$. 
\begin{equation*}%\label{uprimab}
   u^{(1)}(0)=b^2a^{\nicefrac{(q-9)}{4}}
\end{equation*}
\begin{equation}
   \label{usecab} 
    u^{(2)}(0)=-\frac{3}{4}b^3a^{\nicefrac{(q-13)}{4}}+\frac{3}{4}b^2a^{\nicefrac{(q-5)}{2}}
   \end{equation}
\begin{equation}
    \label{uterab}
    u^{(3)}(0)=-\frac{27}{16}b^2a^{\nicefrac{(q-7)}{2}}+\frac{3}{8}ba^{\nicefrac{(3q-11)}{4}}+\frac{21}{16}b^3a^{\nicefrac{(q-17)}{4}}
\end{equation}

Now, by Equations (\ref{zsecab}) and (\ref{zterab}), 
\begin{equation} D=
    \label{detab}
    %2b_2z^{(3)}(0)-6b_3z^{(2)}(0)=
    -\frac{3}{128}b^2a^{\nicefrac{(q-21)}{4}}+\frac{3}{256}ba^{\nicefrac{(q-9)}{2}}.
\end{equation}
Now, if $D=0$, then 
$$b=\frac{1}{2}a^{\nicefrac{(q+3)}{4}}$$ as 
$b$ can be computed from $a$ by (\ref{detab}), since $a,b\neq 0$ and $p\neq 3$. 
%\begin{equation}
%\label{bexpla}
%    -\frac{3}{128}b^2a^{\frac{q-21}{4}}+\frac{3}{256}ba^{\frac{q-9}{2}}=0 \iff b=\frac{1}{2}a^{\frac{q+3}{4}}.
%\end{equation}
This together with $b^q+ba^{\nicefrac{(q-1)}{2}}=a^{q-\nicefrac{(q-1)}{4}}$ yields
$a^{\nicefrac{(q^2-1)}{4}}=1$. Equation $x^{\nicefrac{(q^2-1)}{4}}=1$ has a finite number of solutions, and we assume $a$ not to be any of them. In turns out that apart from the above discarded values of $a$, determinant $D$ does not vanish. Thus for all but a finitely many points of $\cX$, the rank of $W$ is at least four.   
\begin{theorem}%\label{the24092024} 
For $p\ge 5$, the order sequence of $\cX$ is 
$\left(\varepsilon_0,\varepsilon_1,\varepsilon_2,\varepsilon_3,\varepsilon_4\right)=\left(0,1,2,3,q\right)$. 
\end{theorem}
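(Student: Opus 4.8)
The plan is to read the generic order sequence straight off the Wronskian computation just completed, using the corollary to Result \ref{res25092024}. By definition the order sequence $(\varepsilon_0,\ldots,\varepsilon_4)$ of $\cX$ is the order sequence at a generically chosen point, so it suffices to determine the orders at a single non-Weierstrass point that is not $\mathbb{F}_{q^2}$-rational.

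First I would record the conclusion of the preceding analysis: for every point $P=(a,b)$ of $\cX$ lying outside the finite set formed by the images of the branches centered at $O$ and $P_\infty$ together with the finitely many exceptional values of $a$ singled out along the way, the determinant $D$ built from the first four rows and columns of the Wronskian matrix is non-zero. Hence the first four rows of $W(P)$ are linearly independent over $\mathbb{F}$, so that at such a point the first four orders of $\cX$ are $\varepsilon_0=0$, $\varepsilon_1=1$, $\varepsilon_2=2$, $\varepsilon_3=3$.

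Next I would bring in maximality. Since $\cX$ is $\mathbb{F}_{q^2}$-maximal it has only finitely many $\mathbb{F}_{q^2}$-rational points, so a generically chosen $P$ avoids that finite set as well and is therefore not $\mathbb{F}_{q^2}$-rational. Picking $P$ outside the union of these two finite bad loci, the hypotheses of the corollary to Result \ref{res25092024} hold---the first four rows of $W(P)$ are independent and $P$ is not $\mathbb{F}_{q^2}$-rational---so the corollary forces the order sequence at $P$ to be $(0,1,2,3,q)$; note that $q\ge 5>3$ ensures $q$ is genuinely the top order $\varepsilon_4$. As this order sequence is attained at a generic point, it is the order sequence of $\cX$, which is the assertion.

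The remaining work is purely bookkeeping, and there is no real obstacle: the heavy lifting---verifying that $D$ does not vanish identically and isolating the finitely many exceptional $a$---has already been carried out above, and the compatibility of the two finiteness constraints is immediate, a union of finite sets being finite. If one preferred not to invoke the corollary, one could instead combine the first four orders $0,1,2,3$ with the statement in Result \ref{res25092024} that $q$ is an order at every non-$\mathbb{F}_{q^2}$-rational place, which likewise pins down $\varepsilon_4=q$.
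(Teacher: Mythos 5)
Your proposal is correct and follows essentially the same route as the paper: the paper likewise reads $\left(\varepsilon_0,\varepsilon_1,\varepsilon_2,\varepsilon_3\right)=(0,1,2,3)$ off the non-vanishing of the Wronskian minor $D$ at all but finitely many points, and then concludes $\varepsilon_4=q$ from Result \ref{res25092024} applied at a non-$\mathbb{F}_{q^2}$-rational (hence generic) place — exactly the alternative you mention in your last sentence. Your version merely makes explicit the bookkeeping that the two exceptional loci are finite, which the paper leaves implicit.
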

\begin{proof} 
From the above computation, for all but a finitely many points $W$ has rank at least four, that is, $\left(\varepsilon_0,\varepsilon_1,\varepsilon_2,\varepsilon_3\right)=\left(0,1,2,3\right)$.
Furthermore, $\varepsilon_4=q$ by Result \ref{res25092024}. 
\end{proof}
Our method can also be used to determine the order sequence at any point we have discarded including the Weierstrass points of $\cX$. 
Unfortunately, some details require long calculations and we limit ourselves to compute determinant $E=2b_2u^{(3)}(0)-6b_3u^{(2)}(0)$ of the first four rows and the first, second, third and fifth columns: 
\begin{equation*}%\label{determinateu} 
E=
\begin{vmatrix}
 1& a & b & \frac{b^4}{a^2}\\
    0& 1 & m & -\frac{2b^3(b-2am)}{a^3}\\
    0 & 0 & 2b_2 & \frac{2b^2(6a^2m^2+4ab(ab_2-2m)+3b^2)}{a^4}\\
    0 & 0 & 6b_3 & -\frac{24b^4}{a^5}+\frac{72b^3m}{a^4}-\frac{48b_2b^3+72b^2m^2}{a^3}+\frac{24b_3b^3+72b_2b^2m+24bm^3}{a^2}
\end{vmatrix}
\end{equation*}
By (\ref{usecab}) and (\ref{uterab}), 
\begin{equation} E=
    \label{detabu}
     \frac{3}{128}ba^{\nicefrac{(q-25)}{4}}\left(-2b^3+5b^2a^{\nicefrac{(q+3)}{4}}-4ba^{\nicefrac{(q+3)}{2}}+a^{\nicefrac{(3q+9)}{4}}\right).
\end{equation}
If $E=0$ then (\ref{detabu}) yields 
\begin{equation*}
    -2b^3+5b^2a^{\nicefrac{(q+3)}{4}}-4ba^{\nicefrac{(q+3)}{2}}+a^{\nicefrac{(3q+9)}{4}}= \left(-2b+a^{\nicefrac{(q-3)}{4}}\right)\left(a^{\nicefrac{(q-3)}{4}}-b\right)^2.
\end{equation*}
Since $a,b\neq 0$ and $p\neq 3$, this implies either 
\begin{equation} \label{cond1}
    b=a^{\nicefrac{(q+3)}{4}}
\end{equation}
or
\begin{equation*}%\label{cond2}
    b=\frac{1}{2}a^{\nicefrac{(q+3)}{4}}.
\end{equation*}
The former case cannot actually occur as (\ref{cond1}) together with (\ref{quozientebella}) imply $a=0$.

From the above discussion, the Weierstrass points of $\cX$ are among the Frobenius images of the (non-singular) points $P=(1:a:b)$ of $\cF$ where
$$(a,b)=\left\{a^{(q^2-1)/4}=3,b=\frac{1}{4}a^{\nicefrac{(q+3)}{4}}\right\}\cup\left\{a^{(q^2-1)/4}=\frac{5}{3},b=\frac{3}{8}a^{\nicefrac{(q+3)}{4}}\right\}\cup\left\{a^{(q^2-1)/4}=1, b=\frac{1}{2}a^{\nicefrac{(q+3)}{4}}\right\}    
$$
together with the points of $\cX$ arising from the singular points of $\cF$; see Proposition \ref{propoints}. 
\begin{theorem}%\label{the26092024D} 
For $p\ge 7$, the $\mathbb{F}_{q^2}$-rational Weierstrass points of $\cX$ are those points which arise from the singular points of $\cF$.  
\end{theorem}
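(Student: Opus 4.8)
The plan is to decide, among the finitely many candidate points already singled out, which are simultaneously $\mathbb{F}_{q^2}$-rational and genuinely Weierstrass. By the preceding Wronskian analysis the nonsingular Weierstrass points are confined to the Frobenius images of the three families $P=(1:a:b)$ with $a^{(q^2-1)/4}=3,\ b=\tfrac14 a^{(q+3)/4}$, or $a^{(q^2-1)/4}=\tfrac53,\ b=\tfrac38 a^{(q+3)/4}$, or $a^{(q^2-1)/4}=1,\ b=\tfrac12 a^{(q+3)/4}$, together with the points coming from the singular points of $\cF$. The latter are genuine Weierstrass points: by Proposition \ref{propoints} and the order-sequence computation their orders $\left(0,1,\tfrac14(q+3),\tfrac12(q+1),q+1\right)$ at $Q_0,Q_\infty$ and $\left(0,1,2,\tfrac12(q+5),q+1\right)$ at the points $Q_c$ differ from the generic sequence $(0,1,2,3,q)$. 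Hence the whole task is to show that none of the three nonsingular families contributes an $\mathbb{F}_{q^2}$-rational Weierstrass point. Throughout I would use that the Weierstrass locus is invariant under $\aut(\mathbb{F}_{q^2}(\cX))$, so it is enough to test one representative of each orbit.

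I would first dispose of the first two families by a rationality test. An $\mathbb{F}_{q^2}$-rational point has $a\in\mathbb{F}_{q^2}^{\ast}$, so $a^{(q^2-1)/4}$ is a fourth root of unity; a rational point of the first family would therefore force $3^4=81=1$, and one of the second family $(5/3)^4=625/81=1$. For $p\ge 7$ one has $81\ne 1$, so the first family has no rational point at all, while $625\ne 81$ unless $p=17$, so the second has none except possibly for $p=17$. To finish these cases I would note from (\ref{b2espl}) and (\ref{b3espl}) that $b_3\ne 0$ on the first family and $b_2\ne 0$ on the second; then the two lower rows of the Wronskian stay independent in the last three columns, the first four rows keep rank four, and such a point carries the order sequence $(0,1,2,3,q+1)$ and is not Weierstrass. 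This disposes of the first two families whether or not they happen to be rational.

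The heart of the matter is the third family, which is always $\mathbb{F}_{q^2}$-rational (as $1$ is a fourth root of unity) and which forms a single orbit under the automorphism $\Psi$ of Lemma \ref{automorfismiquoziente}. The obstacle is that both previously computed maximal minors vanish here at once: the factor $-2b+a^{(q+3)/4}$ of $E$ in (\ref{detabu}) and the matching factor of $D$ in (\ref{detab}) both vanish at $b=\tfrac12 a^{(q+3)/4}$. The rank of the first four Wronskian rows is then governed by the single remaining maximal minor $D'=z^{(2)}(0)\,u^{(3)}(0)-z^{(3)}(0)\,u^{(2)}(0)$, and settling its vanishing is the crux. What makes this delicate is that $\cX$ lies on the quadric $X_3^2=X_0X_4$, that is $u=z^2$; differentiating this identity gives $D'=2z^{(1)}(0)\bigl(3\,(z^{(2)}(0))^2-z^{(1)}(0)\,z^{(3)}(0)\bigr)$, so one must decide whether the two relations $D=E=0$ already force the bracket to vanish. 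I would evaluate $z^{(1)}(0),z^{(2)}(0),z^{(3)}(0)$ at $b=\tfrac12 a^{(q+3)/4}$ from (\ref{mespl})--(\ref{b3espl}), substitute into this bracket, and settle $D'$ on one orbit representative: showing $D'\ne 0$ would give these points the sequence $(0,1,2,3,q+1)$ and complete the proof. This interplay between the quadric relation and the simultaneous vanishing of $D$ and $E$ is exactly where the main difficulty lies.
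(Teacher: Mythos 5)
Your strategy is the right one and you have put your finger on exactly the right spot, but the step you defer is precisely the one that fails, so the proposal has a genuine gap. On the third family ($a^{(q^2-1)/4}=1$, $b=\tfrac12 a^{(q+3)/4}$) put $A=a^{(q-1)/4}$; then (\ref{mespl})--(\ref{b3espl}) give $m=\tfrac12A$, $b_2=-\tfrac{1}{32}A/a$, $b_3=\tfrac{1}{128}A/a^2$, and hence $z^{(1)}(0)=\tfrac14A^2$, $z^{(2)}(0)=-\tfrac{1}{16}A^2/a$, $z^{(3)}(0)=\tfrac{3}{64}A^2/a^2$. Therefore $3\bigl(z^{(2)}(0)\bigr)^2=\tfrac{3}{256}A^4/a^2=z^{(1)}(0)\,z^{(3)}(0)$, so your bracket vanishes and $D'=0$. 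Since $D=E=0$ by the very definition of this family, all three $2\times2$ minors of the block $\left(\begin{smallmatrix}2b_2&z^{(2)}(0)&u^{(2)}(0)\\ 6b_3&z^{(3)}(0)&u^{(3)}(0)\end{smallmatrix}\right)$ vanish; the rank of the first four Wronskian rows is $2$ plus the rank of this block, hence equals $3$, and $j_3\ge 4$ at each of the $\tfrac14(q^2-1)$ points of the family. These are $\mathbb{F}_{q^2}$-rational points with non-generic order sequence, and no choice of maximal minor will rescue the argument. A weight count corroborates this: $\deg R-N=\tfrac12(q^2-13)$; the points $Q_0,Q_\infty$ absorb excess $2\cdot\tfrac14(3q-15)$; the $\tfrac12(q-1)$ points $Q_c$ absorb $\tfrac12(q-1)\cdot\tfrac12(q-5)$ once the second exponent in (\ref{ramoorgineY}) is corrected from $\tfrac12(q+1)$ to $\tfrac12(q+5)$ (so that $j_3(Q_c)=\tfrac12(q+1)$, not $\tfrac12(q+5)$); and the leftover excess is then exactly $\tfrac14(q^2-1)$ --- one unit for each point of the third family, consistent with order sequence $(0,1,2,4,q+1)$ there.

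So the obstacle is not that the deferred computation is hard, but that it comes out the other way. For comparison, the paper disposes of all three families in one line by asserting that $a^{(q^2-1)/4}$ with $a\in\mathbb{F}_{q^2}$ is never $1$, $3$ or $\tfrac53$ when $p\ge7$; this correctly kills the first family ($3$ is a fourth root of unity only when $p\mid 80$) and, modulo the case $p=17$ that you rightly flag (in $\mathbb{F}_{17}$ one has $5/3=13$ and $13^4=1$), the second, but it is plainly false for the value $1$, so the published argument does not close the gap either. Your handling of the first two families is essentially the intended one, and your extra observation that $b_3\ne0$ (resp.\ $b_2\ne0$) keeps the four rows independent there is a genuine improvement since it also covers $p=17$. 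The third family, however, cannot be dismissed: either ``Weierstrass point'' must be taken in a different sense, or the statement has to be amended to include these $\tfrac14(q^2-1)$ additional rational points.
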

\begin{proof} It is enough to notice that if $p\ge 7$ then $a^{\nicefrac{(q^2-1)}{4}}$ with $a\in \mathbb{F}_{q^2}$ is always distinct from $1,3$ and $\frac{5}{3}.$  
\end{proof}

\end{document}